\newtheorem{thm}{Theorem}[section]
\newtheorem{cor}[thm]{Corollary}
\newtheorem{lem}[thm]{Lemma}
\newtheorem{defn}[thm]{Definition}
\newtheorem{rmk}[thm]{Remark}
\def\N{{\mathbb N}}
\def\Z{{\mathbb Z}}
\def\Q{{\mathbb Q}}
\def\R{{\mathbb R}}
\def\C{{\mathbb C}}
\def\SS{{\mathbb S}}
\def\bb{\begin}
\def\bc{\begin{center}}       \def\ec{\end{center}}
\def\be{\begin{equation}}     \def\ee{\end{equation}}
\def\ba{\begin{array}}        \def\ea{\end{array}}
\def\bea{\begin{eqnarray}}    \def\eea{\end{eqnarray}}
\def\beaa{\begin{eqnarray*}}  \def\eeaa{\end{eqnarray*}}
\def\bma{\begin{pmatrix}}
\def\ema{\end{pmatrix}}
\def\hh{\!\!\!\!}             \def\EM{\hh &   &\hh}
\def\EQ{\hh & = & \hh}
\def\al{\alpha}               \def\bt{\beta}
\def\e{\varepsilon}           
\def\de{\delta}               \def\om{\omega}
\def\la{\lambda}              \def\vp{\varphi}
               \def\th{\theta}
\def\vth{\vartheta}           \def\ga{\gamma}
               \def\Th{\Theta}
               \def\ro{\varrho}
\def\oo{\infty}                              
\def\dd{\cdots}                              
\def\q{\quad}                                \def\qq{\qquad}
\def\f{\frac}                                
\def\z{\left}                                \def\y{\right}
\def\tm{\times}                              \def\bs{\backslash}
\def\ol{\overline}
\def\ul{\underline}
\def\bu{$\bullet$\ }
\def\A{{\mathcal A}}
\def\mcc{{\mathcal C}}
\def\D{{\mathcal D}}
\def\E{{\mathcal E}}
\def\M{{\mathcal M}}
\def\O{{\mathcal O}}
\def\P{{\mathcal P}}
\def\mcs{{\mathcal S}}
\def\T{{\mathcal T}}
\def\rd{\,{\rm d}}
\def\dt{\,{\rm d}t}
\def\ds{\,{\rm d}s}
\def\dx{\,{\rm d}x}
\def\dy{\,{\rm d}y}
\def\ifl{\iffalse}
\def\d{\cdot}
\def\dd{\cdots}
\def\oo{\infty}
\def\f{\frac}
\def\z{\left}
\def\y{\right}
\def\q{\quad}
\def\qq{\qquad}
\def\bs{\backslash}
\def\andq{\quad \mbox{ and } \quad}
\def\qqf{\quad \forall}
\def\lb{\label}
\def\x#1{{\rm (\ref{#1})}}
\def\Proof{\noindent{\bf Proof} \quad}
\def\qed{\hfill $\Box$ \smallskip}
\def\tl{\tilde}
\def\rth{{R,\theta_0}}
\def\vr{{\vec r}}
\def\vp{{\vec p}}
\def\arctanh{{\rm arctanh}}
\def\rpq{{R_{p,q}}}
\def\Rpq{{R_{q/p}}}
\def\Rrh{R_{\ro_0}}
\def\ctper{C(\R^2/2\pi\Z^2)}
\def\ysb#1{{\color{blue} #1}}
\begin{document}

\title{On Planar Shadowing Curves to Closed Escaping Curves}

\author{Qiaoling Wei\\
School of Mathematical Sciences, Capital Normal University, Beijing 100048, China\\
E-mail: wql03@cnu.edu.cn\\
Meirong Zhang\footnote{
Supported by the National Natural Science Foundation of China (Grant no. 11790273).}\\
Department of Mathematical Sciences, Tsinghua University, Beijing 100084, China\\
E-mail: zhangmr@tsinghua.edu.cn
}


\maketitle

\begin{abstract}

We introduce a new dynamical system model called the shadowing problem, where a shadower chases after an escaper by always staring at and keeping the distance from him. When the escaper runs along a planar closed curve, we associate to the reduced shadowing equations the rotation number, and show that it depends only on the geometry of the escaping curve. Two notions called the critical shadowing distance and turning shadowing distance are introduced to characterize different dynamical behaviors. We show that a planar closed escaping curve could have shadowing curves of different types including periodic, subharmonic and ergodic ones, depending on the shadowing distance.  Singularities of cusp type are found when the shadowing distance is large. Shadowing curves to an escaping circle are examined in details analytically and numerically. Finally, we conjecture that the critical shadowing distance and turning shadowing distance are coincident for typical escaping curves.

\end{abstract}

\bigskip

{\bf Mathematics Subject Classification (2020)}: 34B24; 34C25; 34B15; 37E45;

{\bf Keywords:} Shadowing curve; escaping curve; shadowing equation; rotation number; cusp; periodic trajectory; subharmonics; ergodic; critical shadowing distance.


\section{Shadowing Problems and Shadowing Equations}
\setcounter{equation}{0} \lb{first}

In the Euclidean space $\R^d$ of dimension $d$, suppose that a person, called the {\it escaper},  is escaping along a directional $C^1$ parameterized curve
    \[
    \E: \q \vr = \vr_0(t),\q t\in \R,
    \]
called the {\it escaping curve} (EC, for short) in this paper. Here $t$ is the time. Another person, called the {\it shadower}, is shadowing the escaper using the simplest strategy by staring at the escaper and keeping the initial distance from the escaper at all times.

Let the curves of the shadower be expressed using the following parameterization
    \[
    \mcs: \q \vr = \vr(t),\q t\in \R,
    \]
so that the starting shadowing position $\vr(0)\ne \vr_0(0)$. We can deduce the equation for $\vr(t)$ as follows. By staring at the escaper, it means that at any time $t$, one has some $\al=\al(t)\in \R$ such that
    \be \lb{eq1}
    \vr'(t) = \al(t) (\vr_0(t)-\vr(t))\qqf t\in \R.
    \ee
By keeping the initial distance, it means that
    \be \lb{eq2}
    \|\vr_0(t)-\vr(t)\|\equiv \|\vr_0(0)-\vr(0)\|\qqf t\in \R.
    \ee
By differentiating in $t$, the requirement \x{eq2} is equivalent to
    $(\vr_0-\vr)\d (\vr'_0-\vr')=0,$
i.e.
    \be \lb{rr0}
    \vr' \d (\vr_0-\vr)= \vr'_0 \d (\vr_0-\vr).
    \ee
It is also necessary from the requirement \x{eq1} that
    \bea \lb{kt}
    \al(t) \EQ \f{\vr'(t)\d (\vr_0(t)-\vr(t))}{\|\vr_0(t)-\vr(t)\|^2} \equiv \f{\vr'_0(t)\d (\vr_0(t)-\vr(t))} {\|\vr_0(t)-\vr(t)\|^2},
    \eea
when equality \x{rr0} is used. By substituting \x{kt} into the requirement \x{eq1}, we conclude that the motions $\vr(t)$ of the shadower must satisfy
    \be \lb{SE}
    \vr'  =\f{\vr'_0(t)\d (\vr-\vr_0(t))}{\|\vr-\vr_0(t)\|^2}(\vr-\vr_0(t))=:\vec f(t,\vr).
    \ee
This is a well-defined nonlinear non-autonomous system of ODEs in $\R^d$ with the time dependent vector field $\vec f(t,\vr)$. Conversely, it is easy to verify that any solution $\vr(t)$ of system \x{SE} fulfills the requirements  \x{eq1} and \x{eq2}, with $\al(t)$ as in \x{kt}.

Geometrically, system \x{SE} means that $\vr'$ is the projection of $\vr_0'$ in the direction $\vr-\vr_0$. The shadower $\vr(t)$ will point to (resp. oppose to) the escaper $\vr_0(t)$ when $\alpha(t)>0$ (resp. $\alpha(t)<0$). When $\alpha(t)=0$ or $\vr_0'(t) \cdot(\vr(t)-\vr_0(t))=0$, the shadower $\vr(t)$ will stop at these times.

With these explanations to the strategy that the shadower is always staring at the escaper, system \x{SE} of ODEs is called in this paper the {\it shadowing equation} (SE, for short) to (the EC) $\E$, and, meanwhile, the solutions of SE \x{SE} are called the {\it shadowing curves} (SC or SCs, for short) to $\E$ or to $\vr_0(t)$. Moreover, for any SC $\vr(t)$ to $\vr_0(t)$,
    \be \lb{sdr}
    R:= \|\vr_0(0)-\vr(0)\|\equiv \|\vr_0(t)-\vr(t)\|>0
    \ee
is called the {\it shadowing distance} 
of SC $\vr(t)$.

    \bb{thm}\lb{sys}
Let the EC $\E$ be given. Then, for any initial point $\vr(0)$ different from $\vr_0(0)$, SE \x{SE} admits a unique globally defined solution
    \be \lb{vrtt}
    \vr=\vr(t)
    =\vr(t;\vr(0)), \qq t\in \R.
    \ee
Hence any solution \x{vrtt} defines a parameterized SC to $\vr_0(t)$ with the shadowing distance $R$ being defined by \x{sdr}.
    \end{thm}

To see that solutions \x{vrtt} are globally defined, one can notice from \x{sdr} that solutions $\vr(t)$ are always bounded away from singularities $\vr=\vr_0(t)$. Moreover, the vector field $\vec f(t,\vr)$ of SE \x{SE} is smooth in $\vr$ and satisfies the boundedness condition
    \[
    \|\vr'(t)\|=\|\vec f(t,\vr)\|\le \|\vr'_0(t)\|.
    \]
Hence solutions $\vr(t)=\vr(t;\vr(0))$ can always be continued to the whole line of $t$.

Besides the notations \x{vrtt} for SCs, we also use the following notations for SCs
    \[
    \mcs=\mcs_{\vr(0)}= \mcs_{\vr(0);\vr_0}, \mbox{ etc.}
    \]
Here, in the last notation, the dependence of SCs on the EC $\vr_0(\d)$ is emphasized.
\ifl When time $t$ is taken in account, these are also written as
    \[
    \mcs(t)=\mcs_{\vr(0)}(t)= \mcs_{\vr(0);\vr_0}(t), \mbox{ etc.}
    \]
\fi

In this paper, we mainly concentrate on the studying for shadowing curves on the Euclidean plane $\R^2$ when the escaping curves are planar closed curves. The content and results are as follows.

In \S \ref{second}, we will first briefly study the invariance properties on shadowing problems. Then we will deduce an extending shadowing equation \x{ESE} for general dimension which is a higher dimensional linear system. Finally, when the planar shadowing curves to planar escaping curves are considered, we will use the moving polar coordinates to deduce a reduced shadowing equation \x{RSE} which is a nonlinear differential equation on the circle.

The main content is given in \S \ref{third}. When the escaping curve is a planar closed curve $\E$ with some regularity, we use the reduced shadowing equation to introduce the rotation number $\ro(R)=\ro_\E(R)$ from dynamical systems theory \cite{A83, H80, KH95}, which is a function of the shadowing distance $R\in(0,+\oo)$.
It is proved in Lemma \ref{inv} that $\rho_\E(R)$ is independent of the parameterizations of $\E$, i.e. $\rho_\E(R)$ depends only on the geometry of $\E$. Moreover, we find that $\ro_\E(R)$ has closed connections with the perimeter of $\E$, the rotation index of  $\E$, and the area enclosed by $\E$.
For details, see Lemma \ref{upbound}, Theorem \ref{rho-01} and Theorem \ref{mono}. By using rotation number $\ro(R)$, we will apply the dynamical behavior of circle diffeomorphisms,  including the Denjoy theorem, to give a fair complete characterization of types of planar shadowing curves.
 The main results are stated in Theorem \ref{main1} and Theorem \ref{main2}. Typically, we have the following three types of shadowing curves:
\begin{itemize}
\item when the shadowing distance $R$ is not too large, the shadowing problem admits only $2\pi$-periodic shadowing curves and those shadowing curves which are approaching to periodic ones.
\item when $R$ is large enough and $\ro(R)$ is rational, the shadowing problem admits subharmonic ($2p\pi$-periodic) shadowing curves and those shadowing curves which are approaching to subharmonic ones.
\item  when $R$ is large enough and $\ro(R)$ is irrational, each shadowing curve is dense in the shadowing domain $\D_R$ as in \x{SDD}.
\end{itemize}

In order to distinguish the ranges of these different shadowing distances, we use the properties of rotation number $\ro_\E(R)$ to introduce two notions which are called the critical shadowing distance $\ul{R}(\E)$ and the turning shadowing distance $\ol{R}(\E)$. See Definition \ref{CSD} and Definition \ref{TSD} respectively. These notions depend only on the geometry of closed escaping curves $\E$.
It is proved in Theorem \ref{circ} that $\ul{R}(\E)$ is really different from the `perimeter' when $\E$ is not a circle. Finally, regularity of shadowing curve are shown for convex escaping curve in section \ref{scs2}.

In \S \ref{fourth}, by considering the unit circle as an escaping curve, we will examine all shadowing curves in details using
the reduced shadowing equation. For this simplest example, the shadowing problem will admit beautiful shadowing curves. These will be plotted in Figures \ref{sccircles}-\ref{Erg4}. When the escaping curve is chosen an ellipse, we give some analytic and numerical analysis to possible shadowing curves.

In \S \ref{fifth}, we impose a conjecture, which asserts that the critical shadowing distance $\ul{R}(\E)$ and the turning shadowing distance $\ol{R}(\E)$ are coincident: $\ul{R}(\E)=\ol{R}(\E)$ for typical closed curves $\E$. Once this is true, we can give a compete characterization to all types of shadowing curves. As mentioned in \S \ref{third}, these quantities are related with the geometric properties of $\E$. Hence the conjecture may be of independent interest from the point of view of differentiable geometry.

Finally, although the paper contains several interesting results, it is just a beginning study for shadowing problems. Moreover, most of the proofs in this paper are not difficult from the point of view of dynamical systems.

\section{Extended and Reduced Shadowing Equations}
\setcounter{equation}{0} \lb{second}

\subsection{Invariance properties on SCs} \lb{s21}

Let us state some invariance results on SCs under the temporal and spatial transformations. At first, we consider temporal transformations. Let $\T: \R\to \R$ be a $C^1$ diffeomorphism, regardless increasing or decreasing, considered as a temporal transformation. Then $\T$ transforms any $C^1$ curve $\vec \ga(t)\in C^1(\R, \R^d)$ to another $C^1$ curve $\vec \ga(\T(t))\in C^1(\R, \R^d)$.  From the SEs and the definition of SCs, it is trivial that
    \[
    \mbox{$\vr(\T(t))$ is an SC to $\vr_0(\T(t))$ $\iff$ $\vr(t)$ is an SC to $\vr_0(t)$ .}
    \]
Moreover, under $\T$, the initial escaping and shadowing points of $\vr_0(t)$ and $\vr(t)$ are transformed to $\vr_0(\T(0))$ and $\vr(\T(0))$  respectively. Hence the shadowing distances are invariant under $\T$. In particular, one can consider linear temporal transformations defined by
    \[
    \T_{\al,\bt}(t) := \al t+\bt, \qq t\in \R,
    \]
where $\al, \ \bt \in \R$ are constants such that $\al\ne 0$.

Next we consider spatial transformations $\M$ of $\R^d$ defined by translations, rigid rotations and dilations. More precisely, let $\vec b\in \R^d$, $\O\in O(\R^d)$ and $D>0$, one has a transformation $\M: \R^d \to \R^d$ defined by
    \[
    \M(\vec x) := D \O(\vec x)+\vec b,\qq \vec x\in \R^d.
    \]
Then
    \[
    \mbox{$\M(\vr(t))$ is an SC to $\M(\vr_0(t))$ $\iff$ $\vr(t)$ is an SC to $\vr_0(t)$.}
    \]
Under $\M$, the initial escaping and shadowing points of $\vr_0(t)$ and $\vr(t)$ are  transformed to $\M(\vr_0(0))$ and $\M(\vr(0))$ respectively. Hence the shadowing distances are dilated
    \[
    \z\|\M(\vr_0(t))-\M(\vr(t))\|\equiv D \|\vr_0(t)-\vr(t)\y\|.
    \]

\subsection{Extended shadowing equations} \lb{ese9}

In the following, we will show that the shadowing problems in $\R^d$ can be reduced from some linear systems of ODEs, but in a higher dimensional Euclidean space $\R^{d+1}$. In $\R^d$, the $(d-1)$-dimensional unit sphere is denoted by
    \[
    \SS^{d-1}:= \bigl\{ \vec x\in \R^d: \|\vec x\|=1\bigr\}.
    \]

When an EC $\vr_0(t)$ is given, let us introduce a family of linear systems of ODEs in $\R^{d+1}$. Let $R>0$ be any number, considered as a parameter here. The linear system for $(\vec x,y) \in\R^d\tm \R=\R^{d+1}$ is defined to be
    \be \lb{ESE}
    \z\{\ba{l}\vec x'(t)= -\f{1}{R}\vr'_0(t)y(t),\\
    y'(t) = -\f{1}{R}\vr'_0(t) \d\vec x(t) .
    \ea\y.
    \ee

    \bb{lem}\lb{fir}
For any $R>0$, the function $y^2-\|\vec x\|^2$ is a first integral of system \x{ESE}. Hence solutions of  \x{ESE} are foliated as
    \be \lb{Hc}
    {\mathcal H}_c: \q y^2 - \|\vec x\|^2 \equiv c\in \R,
    \ee
a family of invariant hyperbolas.
    \end{lem}

\Proof
Let $(\vec x(t),y(t))$ be any solution of \x{ESE}. Then
    \beaa
    \EM \f{\rd}{\dt} \z((y(t))^2 -\|\vec x(t)\|^2\y) = 2\z( y(t) y'(t)-\vec x(t)\d \vec x'(t)\y)\\
    \EQ -\f{2}{R} \z(y(t) \vr'_0(t) \d\vec x(t)- \vec x(t)\d \vr'_0(t)y(t)\y)\equiv 0.
    \eeaa
Hence $y^2-\|\vec x\|^2$ is a first integral of system \x{ESE}.
\qed

When $c=0$, \x{Hc} is
    \be \lb{H0}
    {\mathcal H}_0: \q y^2 - \|\vec x\|^2 \equiv 0,
    \ee
an invariant cone of \x{ESE}.

    \bb{thm} \lb{ESE1}
For any initial shadowing point $\vr(0)\ne \vr_0(0)$, let us take in \x{ESE} the parameter $R=\|\vr(0)-\vr_0(0)\|$. Suppose that $(\vec x(t),y(t))$ is a solution of \x{ESE} satisfying the initial value conditions
    \be \lb{x0y0}
    (\vec x(0),y(0))=\z((\vr(0)-\vr_0(0))/R,1\y)\in\SS^{d-1}\tm \{1\}.
    \ee
Then
    \be \lb{ybg0}
    y(t)> 0\qqf t\in \R,
    \ee
and the SC $\vr(t)=\vr(t;\vr(0))$ to the EC $\vr_0(t)$ is given by
    \be \lb{rxy}
    \vr(t)\equiv \vr_0(t) + R \f{\vec x(t)}{y(t)}, \qq t\in \R.
    \ee
    \end{thm}

\Proof
Due to the initial value conditions \x{x0y0}, we know from \x{H0} that $(\vec x(t),y(t))$ satisfies
    \be \lb{h00}
    (y(t))^2 - \|\vec x(t)\|^2 \equiv 0.
    \ee
If \x{ybg0} fails, we have $y(t_0)=0$ for some $t_0$. Combining with \x{h00}, we have also $\vec x(t_0) =\vec 0$. As $(\vec x(t),y(t))$ is a solution of a linear system, we would have $(\vec x(t),y(t))\equiv (\vec 0,0)$, which contradicts \x{x0y0}.

Because of \x{ybg0}, the right-hand side of \x{rxy} is well-defined and is temporarily denoted by
    \[
    \vec u(t):=\vr_0(t) + R \f{\vec x(t)}{y(t)}.
    \]
Then, by using system by \x{ESE}, the derivative of $\vec u(t)$ is
    \beaa
    \vec u' = \vr'_0 + \f{R \vec x' y - R y' \vec x }{y^2}= \vr'_0 - \f{\vr'_0 y y - (\vr'_0\d \vec x) \vec x }{y^2}
    = \f{1}{y^2} (\vr'_0 \d \vec x) \vec x.
    \eeaa
On the other hand, one has from \x{h00} that $\vec u-\vr_0= R {\vec x}/{y}\in R \SS^{d-1}$. Hence
    \beaa
    \vec f(t,\vec u)= \f{\vr'_0 \d (\vec u-\vr_0)}{\|\vec u-\vr_0\|^2}(\vec u-\vr_0)= \f{\vr'_0 \d (R {\vec x}/{y})}{R^2}(R {\vec x}/{y})= \f{1}{y^2} (\vr'_0 \d \vec x) \vec x.
    \eeaa
That is, $\vec u(t)$ also solves SE \x{SE}. By \x{x0y0}, one has $\vec u(0)= \vr(0)$. Thus $\vec u(t)\equiv \vr(t)$, completing the proof of equality \x{rxy}.
\qed

Because of Theorem \ref{ESE1}, system \x{ESE} is referred to the {\it extended shadowing equation} (ESE, for short) to $\vr_0(t)$.

    \bb{rmk}\lb{ft}
When the shadowing distance $R$ is fixed, all SCs can be obtained from solutions of linear system \x{ESE}. In such a sense, shadowing problems are simple dynamical systems. For example, suppose that $\E$ is closed, i.e. $\vr_0(t)$ is periodic. It can be expected from the Floquet theory \cite{H80} for time-periodic linear systems that shadowing problems can admit periodic and quasi-periodic SCs. On the other hand, as ESEs \x{ESE} depend on the parameter $R$, shadowing problems have a relatively abundant structure of SCs by changing shadowing distance $R$. In fact, in this paper, we will excavate the main features on the structure and types of SCs by taking in account of such a parameter.
    \end{rmk}

\subsection{Reduced shadowing equations} \lb{rse}

We consider a general $C^1$ directional planar parameterized escaping curve
    \[
    \E: \q \vr_0(t) = (\xi(t),\eta(t))\in C^1(\R, \R^2).
    \]
With a given shadowing distance $R>0$, let us adopt the moving polar coordinates
    \be \lb{mpc}
    \mcs:\q \vr(t)= \vr_0(t)+ (R\cos \th(t), R \sin \th(t)).
    \ee
From \x{mpc}, one has
    \[
    \vr'= \vr'_0+  R \th'(-\sin \th, \cos \th).
    \]
Then SE \x{SE} is
    \[
    \vr'_0+  R\th'(-\sin \th, \cos \th)=  \z(\vr'_0\d(\cos \th, \sin \th)\y)(\cos \th, \sin \th).
    \]
By taking the inner product with $(-\sin \th, \cos \th)$, we conclude that $\th=\th(t)$ is determined by the scalar ODE
    \be \lb{RSE}
    \th'   = -\f{1}{R}(-\xi'(t)\sin \th +\eta'(t)\cos \th):=F_{\vr_0}(t,\th),
    \ee
with the initial condition
    \be\label{th0}
\theta(0)=\theta_0,\quad \text{where}\,(\cos \th_0,\sin \th_0)=(\vr(0)-\vr_0(0))/R.
    \ee
We call Eq. \x{RSE} the {\it reduced shadowing equation} (RSE, for short) to the EC $\vr_0(t)$. SCs are also denoted as $\mcs_\rth$ or $\vr_\rth(t)$. 

In addition, we shall call 
\be\lb{SDD}\mathcal{D}_{R}:=\{\vr_0(t)+Rz|t\in \R, z\in \R^2, |z|=1\}\ee
the {\it Shadowing domain} with given shadowing distance $R$.

\section{Rotation Numbers and Types of Shadowing Curves}
\setcounter{equation}{0} \lb{third}

In this section, we  will consider a general $C^1$ closed escaping curve $\E$. Up to a temporal change, we can always parameterize $\E$ with minimal period $2\pi$, i.e.
    \be\label{pa1}
    \E: \vr_0(t)=(\xi(t),\eta(t))\in C^1(\R,\R^2),\quad (\xi(t+2\pi),\eta(t+2\pi))=(\xi(t),\eta(t)).
    \ee
    In the sequel, we always assume $\E$ is regular, i.e.     \be \lb{reg}
    \vr'_0(t)\ne \vec 0\q\forall t\in\R.
    \ee

\subsection{Rotation numbers} \lb{rn}

Given a closed EC $\E$ as in \x{pa1}, observe that  the corresponding RSE  \x{RSE} is $2\pi$-periodic in $t$ and in $\th$, 
 we will associate to $\E$ the rotation number to characterize the dynamics of SCs.

    \begin{thm} \lb{rot-F} ({\cite{H80} Theorem 2.1})
Suppose that $F(t,\theta)\in C(\R^2/2\pi \Z^2)$  and $F(t,\th)$ is $C^1$ in $\theta$. Let $\theta(t;\theta_0)$ denote the solution of the following equation
    \be\label{per}
    \theta'=F(t,\theta),\quad \theta(0)=\theta_0,
    \ee
then the rotation number of the equation
    \be \lb{rhoF}
    \ro=\ro(F):=\lim_{|t|\to \infty}\frac{\theta(t;\theta_0)-\theta_0}{t}\in \R
    \ee
exists and is independent of $\theta_0$. Moreover, the map
    \[
    \ro: C(\R^2/2\pi \Z^2)\to \R,\quad F\mapsto \ro(F)
    \]
is continuous with respect to the $C^0$ norm in $C(\R^2/2\pi\Z^2)$.
    \end{thm}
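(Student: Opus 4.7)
The strategy is to reduce the continuous-time rotation number to the classical Poincar\'e rotation number of the associated time-$2\pi$ map, and then transfer the known properties of the latter.

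First, I would introduce the Poincar\'e map $P=P_F:\R\to\R$ by $P(\th_0):=\th(2\pi;\th_0)$. Since $F$ is $C^1$ in $\th$, solutions of \x{per} are unique and depend continuously on $\th_0$, so $P$ is a homeomorphism of $\R$; the $2\pi$-periodicity of $F$ in $\th$ gives the equivariance $P(\th+2\pi)=P(\th)+2\pi$, so $P$ is a lift of an orientation-preserving homeomorphism of the circle $\R/2\pi\Z$.

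Next, I would run the classical Poincar\'e argument on $P$. Set $\phi_n(\th):=P^n(\th)-\th$; the equivariance makes each $\phi_n$ a $2\pi$-periodic continuous function, and monotonicity of $P$ yields the oscillation bound $\sup_\th\phi_n-\inf_\th\phi_n\le 2\pi$. The cocycle identity $\phi_{n+m}(\th)=\phi_n(P^m(\th))+\phi_m(\th)$ shows that $a_n:=\sup_\th\phi_n$ is subadditive, so Fekete's lemma gives $\mu:=\lim_n a_n/n\in\R$, and the oscillation bound upgrades this to $\phi_n(\th)/n\to\mu$ uniformly in $\th$. In particular, the discrete rotation number $\mu$ exists and is independent of $\th_0$.

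To bridge to continuous time, I would write $t=2\pi k+s$ with $s\in[0,2\pi)$ and use the bound $|\th(t;\th_0)-P^k(\th_0)|\le 2\pi\|F\|_{C^0}$ (which follows from $|\th'|\le\|F\|_{C^0}$) to obtain $(\th(t;\th_0)-\th_0)/t\to \mu/(2\pi)=:\ro(F)$ as $t\to+\infty$, uniformly in $\th_0$; the $t\to-\infty$ limit reduces to the same quantity by applying the same argument to $P^{-1}$ together with the identity $P^{-n}(\th)-\th=-\phi_n(P^{-n}(\th))$.

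For the continuity claim, a Gronwall-type continuous dependence result for \x{per} shows that $F\mapsto P_F$ is continuous from $(C(\R^2/2\pi\Z^2),\|\cdot\|_{C^0})$ into $\mathrm{Homeo}(\R)$ with the topology of uniform convergence on compacts. Combined with the classical a priori inequality $|\ro(P)-(P^n(\th)-\th)/n|\le 2\pi/n$ for any lift $P$ of a circle homeomorphism, this reduces $C^0$-continuity of $\ro$ to $C^0$-continuity of a fixed finite iterate $P_F^n$ in $F$, which is automatic. The step I would expect to require the most care in a full write-up is the continuity claim, where one must track constants carefully enough to ensure the oscillation and a priori estimates carry the correct $2\pi$ normalization rather than the usual $1$, and where the Gronwall argument relies on the $C^1$-in-$\th$ hypothesis to supply a Lipschitz constant.
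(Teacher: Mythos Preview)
Your proposal is a correct and standard sketch of the classical argument: lift to the Poincar\'e map, run the subadditivity/oscillation argument to get the discrete rotation number, interpolate to continuous time, and use continuous dependence plus the a priori bound $|\ro(P)-(P^n(\th)-\th)/n|\le 2\pi/n$ for continuity in $F$. There is nothing to compare against, however, because the paper does not prove this theorem at all: it is quoted verbatim as Theorem~2.1 of Hale~\cite{H80} and used as a black box (see the citation in the theorem header and the absence of any proof before Remark~\ref{rmk22}). So your write-up supplies strictly more than the paper does.

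One small remark on your final sentence: the $C^1$-in-$\th$ hypothesis is indeed what makes the Gronwall step work, but be aware that the continuity assertion, as literally stated, posits a map on all of $C(\R^2/2\pi\Z^2)$ with the $C^0$ topology. Your argument proves continuity on the subset of $F$ that are $C^1$ (or at least Lipschitz) in $\th$, equipped with the $C^0$ norm; extending to merely continuous $F$ would require either a density argument or a different treatment of uniqueness. This is a wrinkle in the statement rather than in your proof, and the paper's applications only ever use $F$ that are analytic in $\th$, so it is harmless here.
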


\begin{rmk} \lb{rmk22}
Rotation number is an important tool in lower dimensional dynamical systems \cite{A83, H80, KH95} and has many applications in different problems \cite{GZ00, JM82}.
In an abstract setting, the Poincar\'e map $\P: \theta_0\mapsto \theta(2\pi;\th_0)$ of Eq. \x{per} defines a monotone homeomorphism on the circle $\R/{2\pi \Z}$ with the rotation number being defined as
    \be\label{rot2}
    \ro_\P:=\lim_{n\to \infty}\frac{1}{2\pi}\frac{\P^n(\th_0)-\th_0}{n}.
    \ee
Since $\th(t,\th(2n\pi,\th_0))=\th(t+2n\pi,\th_0)$ by periodicity of $F$, the rotation number $\ro(F)$ defined in \x{rhoF} equals the rotation number $\ro_\P$ of $\P$ as in \x{rot2}. We remark that since $\th(t;\th_0)$ is itself defined on $\R$, the rotation number does not need to modulo $\Z$.

\end{rmk}

The dynamical behavior revealed by rotation number is summarized as follows, which is standard in one dimensional circle dynamics  theory.

\begin{thm} ({\cite{H80} Theorem 2.2 \& Theorem 2.4})\label{circledyn}
\begin{enumerate}
\item If the rotation number $\ro=\ro(F)$ is rational, then Eq. \x{per} admits closed trajectories and every other trajectory approaches a closed one.
\item  (Denjoy) If $\ro=\ro(F)$ is irrational and the Poincar\'e map $\mathcal{P}$ is $C^2$, then $\mathcal{P}$ is minimal (i.e. every orbit is dense).
\end{enumerate}
\end{thm}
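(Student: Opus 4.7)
The plan is to reduce both assertions to classical facts about the Poincar\'e map $\P: \th_0 \mapsto \th(2\pi;\th_0)$. By Remark \ref{rmk22}, $\P$ is an orientation-preserving $C^1$ (respectively $C^2$) homeomorphism of $\R/2\pi\Z$ whose rotation number $\ro_\P$ equals $\ro(F)$, and $2\pi$-periodicity of $F$ in $t$ identifies trajectories of \x{per} (modulo shifts by $2\pi n$ in $t$) with orbits of $\P$. Thus it suffices to analyze the circle homeomorphism $\P$.

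For part (1), write $\ro(F) = p/q$ in lowest terms and lift $\P$ to a monotone homeomorphism $\tilde\P:\R\to\R$ commuting with translation by $2\pi$. The key step is to show that $\tilde\P^q(x)-x-2\pi p$ has a zero: otherwise, continuity and $2\pi$-periodicity force this function to be bounded away from zero with a definite sign, and iterating the inequality $\tilde\P^q(x) - x > 2\pi p + \delta$ (or the reverse) yields $\ro_\P \ne p/q$, a contradiction. A zero produces a period-$q$ orbit of $\P$, which lifts to a closed trajectory of \x{per}. For the asymptotic assertion, the complement of this periodic orbit in $\R/2\pi\Z$ is a finite union of open arcs, each invariant under $\P^q$ with both endpoints fixed; monotonicity of $\P^q$ on such an arc forces every orbit in it to converge monotonically to one of the endpoints, which lifts to the stated asymptotic behavior for trajectories of \x{per}.

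For part (2), the Denjoy theorem, I would follow the classical route via the Denjoy inequality. Assuming the $C^2$ hypothesis on $\P$, the crucial estimate is that for any interval $I\subset \R/2\pi\Z$ whose first $n$ iterates $\P(I),\dots,\P^n(I)$ are pairwise disjoint,
\begin{equation*}
\bigl|\log (\P^n)'(x) - \log (\P^n)'(y)\bigr| \le V, \qquad x,y\in I,
\end{equation*}
where $V$ is the total variation of $\log \P'$ on the circle. Combining this bounded-distortion estimate with $\sum_n |\P^n(I)| \le 2\pi$ and the combinatorics forced by the continued-fraction expansion of $\ro_\P$ rules out wandering intervals. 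Since irrationality of $\ro_\P$ forbids periodic orbits of $\P$, the unique nonempty minimal closed invariant set of $\P$ has no isolated points and no complementary arcs, hence must equal all of $\R/2\pi\Z$, giving minimality.

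The main obstacle is establishing the Denjoy inequality and closing the wandering-interval argument in part (2); the $C^2$ assumption is essential here, as Denjoy's own $C^1$ counterexamples demonstrate. Everything else, including all of part (1), follows from comparatively soft arguments using only monotonicity of circle homeomorphisms and the definition of rotation number.
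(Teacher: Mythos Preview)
The paper does not prove this theorem at all; it is quoted verbatim from Hale's book \cite{H80} as background, with the citation in the theorem header and no proof following. So there is nothing to compare against: your sketch is not an alternative to the paper's argument, it is simply a self-contained outline of the classical proof that the paper chose to omit.

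That said, your outline is essentially correct and follows the standard route. One small imprecision in part (1): you write that the complement of ``this periodic orbit'' is a finite union of $\P^q$-invariant arcs with fixed endpoints, but a single period-$q$ orbit need not exhaust the fixed points of $\tilde\P^q - 2\pi p$; there may be other periodic orbits inside each arc. The clean way to phrase it is to take, for a given non-periodic $x$, the maximal open arc containing $x$ with endpoints in the closed set $\{\tilde\P^q = \mathrm{id}+2\pi p\}$, and then run the monotone-convergence argument there. The conclusion (convergence to \emph{some} closed trajectory) is unaffected. Part (2) is an accurate high-level sketch of the Denjoy argument, with the $C^2$ hypothesis entering exactly where you indicate, through bounded distortion of $\log\P'$.
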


Given a $C^1$ closed EC $\E$ as in \x{pa1}, the rotation number of RSE \x{RSE} is denoted by $\ro(R)$, which is considered as a function of shadowing distances $R\in(0,\oo)$.  From Theorem \ref{rot-F}, we know that $\ro(R)$ is continuous in $R\in (0,\oo)$.

\smallskip

The following lemma shows that the rotation number is independent of the parameterizations of $\E$. We use the notation $\ro_{\E}(R)$ to emphasize that $\ro(R)$ is determined by $\E$.

\begin{lem} \lb{inv}
Let $\E$ be a $C^1$ regular closed curve as in \x{pa1} and \x{reg}. Then for any fixed shadowing distance $R>0$, the rotation number $\ro(R)=\ro_{\E}(R)$ is independent of the parameterizations of $\E$.
\end{lem}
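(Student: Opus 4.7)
The plan is to compare the two rotation numbers directly through a reparameterization map and exploit the invariance properties of the shadowing equation stated in Section~\ref{s21}.

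First, I would set up the reparameterization. Suppose $\tilde{\vr}_0(s)$ is a second $C^1$ regular parameterization of $\E$ with minimal period $2\pi$. Since both parameterizations traverse the same directed closed curve, there is a $C^1$ orientation-preserving diffeomorphism $\varphi:\R\to\R$ with $\tilde{\vr}_0(s)=\vr_0(\varphi(s))$. Because both $\vr_0$ and $\tilde{\vr}_0$ have minimal period $2\pi$, the lift $\varphi$ commutes with the $2\pi$-shift with degree $\pm 1$; orientation preservation forces degree $+1$, i.e.\ $\varphi(s+2\pi)=\varphi(s)+2\pi$ for all $s$.

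Next, I would transport the RSE along $\varphi$. By the temporal invariance recalled in \S\ref{s21}, if $\vr(t)$ is an SC to $\vr_0(t)$, then $\tilde{\vr}(s):=\vr(\varphi(s))$ is an SC to $\tilde{\vr}_0(s)$ with the same shadowing distance $R$. In the moving polar coordinates of \eqref{mpc} this translates to $\tilde\theta(s)=\theta(\varphi(s))$, where $\theta$ and $\tilde\theta$ are the solutions of the RSE \eqref{RSE} for $\vr_0$ and $\tilde{\vr}_0$ respectively, with matched initial data. (Alternatively, one can verify this directly from \eqref{RSE}: under $t=\varphi(s)$ we have $\tilde\xi'(s)=\xi'(\varphi(s))\varphi'(s)$ and $\tilde\eta'(s)=\eta'(\varphi(s))\varphi'(s)$, so that $F_{\tilde{\vr}_0}(s,\theta)=\varphi'(s)F_{\vr_0}(\varphi(s),\theta)$, which exactly matches $\frac{d}{ds}\theta(\varphi(s))$.)

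Finally, I would compute the rotation numbers. Writing $\tilde\rho$ and $\rho$ for the rotation numbers of the RSEs for $\tilde{\vr}_0$ and $\vr_0$ respectively, by Theorem~\ref{rot-F} and the relation $\tilde\theta(s)=\theta(\varphi(s))$ one has
\[
\tilde\rho=\lim_{|s|\to\infty}\frac{\tilde\theta(s)-\tilde\theta_0}{s}
=\lim_{|s|\to\infty}\frac{\theta(\varphi(s))-\theta(\varphi(0))}{\varphi(s)}\cdot\frac{\varphi(s)}{s}.
\]
The first factor tends to $\rho$ because $|\varphi(s)|\to\infty$ as $|s|\to\infty$; the cocycle identity $\varphi(s+2\pi)-\varphi(s)=2\pi$ forces $\varphi(s)/s\to 1$ as $|s|\to\infty$, so the second factor tends to $1$. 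Hence $\tilde\rho=\rho$, proving invariance.

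The only genuine point to be careful about is the degree statement $\varphi(s+2\pi)=\varphi(s)+2\pi$, which encodes that both parameterizations have the same minimal period and traverse $\E$ in the same direction; once this is in hand the computation above is routine. A completely equivalent, more ``dynamical'' route would be to observe that the Poincar\'e maps of the two RSEs on $\R/2\pi\Z$ are conjugate by (the reduction mod $2\pi$ of) $\varphi$, so that their rotation numbers agree by the classical conjugacy invariance; I would mention this as an alternative but carry out the direct limit computation, since it is shorter and does not require invoking the Poincar\'e map formalism.
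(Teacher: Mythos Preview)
Your argument is correct and follows essentially the same approach as the paper: relate two parameterizations by a time change $\varphi$ with $\varphi(s+2\pi)=\varphi(s)+2\pi$, transport solutions of the RSE through $\varphi$, and then factor the defining limit for the rotation number as a product whose second factor $\varphi(s)/s\to 1$. The only cosmetic difference is that the paper fixes the normalized arc-length parameterization $\vr_*$ as a canonical reference (so $\varphi=\beta\circ(\mu\,\cdot)$, with $\beta(\tilde s+\ell_0)=\beta(\tilde s)+2\pi$) and shows every parameterization has the same rotation number as $\vr_*$, whereas you compare two arbitrary parameterizations directly; the limit computation is identical in both cases.
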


\Proof
Let $\E$ be as in \x{pa1} and \x{reg}, which are parameterized using $t$. The arc-length parameter $\tilde{s}$ is given by
    \[
    \tilde{s}=\int_0^t B(t')\dt'=:\alpha(t),\quad  B(t):=\|\vr_0'(t)\|>0.
    \]
It follows that
    \[
    \alpha(t+2\pi)=\alpha(t)+\int_0^{2\pi} B(t)\dt=\alpha(t)+\ell_0,
    \]
where $\ell_0$ is the perimeter of $\E$. Let $\beta(\tilde{s}):=\alpha^{-1}(\tilde{s})$. Then
    \be \lb{para0}
    \beta(\tilde{s}+\ell_0)=\beta(\tilde{s})+2\pi,\quad \beta(\ell_0)=2\pi.
    \ee
Define
    \be \lb{para}
    \mu:=\frac{\ell_0}{2\pi}\andq s=\mu^{-1}\tilde{s}.
    \ee
Then the normalized\footnote{The term ``normalized'' means normalization of parameter such that the period changes from $\ell_0$ to $2\pi$.} arc-length parametrization of $\E$
    \[ \vr_*(s):=\vr_0(t)=\vr_0(\beta(\tilde{s}))=\vr_0(\beta(\mu s))\]
is $2\pi$-periodic in $s$. For fixed shadowing distance $R$, if $\th(t)$ is a solution of the RSE $\th'(t)=F_{\vr_0}(t,\th)$,  then $\th_*(s):=\th(\beta(\mu s))$ is  a solution of $\th_*'(s)=F_{\vr_*}(s,\th_*)$. Thus
    \[
    \ro(F_{\vr_*})=\lim_{s\to \infty}\frac{\th_*(s)}{s}=\lim_{s\to\infty}\frac{\th(\beta(\mu s))}{\beta(\mu s)}\lim_{s\to\infty}\frac{\beta(\mu s)}{s}=\lim_{t\to\infty}\frac{\th(t)}{t}=\ro(F_{\vr_0}),
    \]
because we have from \x{para0} and \x{para} the limit
    \( \lim_{s\to\infty}\frac{\beta(\mu s)}{s}=1. \)
\qed


\begin{cor}
\begin{enumerate}
\item In the normalized arc-length parametrization $\vr_*(s)$ of $\E$, the escaper is running at the constant speed $\mu:=\frac{\ell_0}{2\pi}$.
\item Given any $c>0$, let $\E_{c}:=c\E$, then $\ro_{\E_{c}}(R)= \ro_{\E}(c^{-1} R)$.
\end{enumerate}
\end{cor}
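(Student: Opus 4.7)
For statement (1), the plan is a one-line chain-rule computation from the definition $\vr_*(s)=\vr_0(\beta(\mu s))$. Since $\tilde s=\alpha(t)$ is the true arc-length, the inverse function theorem gives $\beta'(\tilde s)=1/B(\beta(\tilde s))$, with $B(t)=\|\vr_0'(t)\|>0$ guaranteed by the regularity assumption \x{reg}. Differentiating the definition of $\vr_*$ and taking norms, the outer factor $\mu$ from the inner derivative survives and the speed factor $B$ cancels, leaving $\|\vr_*'(s)\|\equiv\mu$. This is essentially a tautology: rescaling arc-length by the factor $\mu^{-1}$ produces a parameter with constant speed $\mu$.

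For statement (2), the plan is to compare the RSEs for $\E_c$ and $\E$ directly. Writing $\E_c$ with the parameterization $(c\xi(t),c\eta(t))$ and substituting into \x{RSE} with shadowing distance $R$, one finds
\[
F_{c\vr_0}(t,\th)=-\frac{1}{R}\bigl(-c\xi'(t)\sin\th+c\eta'(t)\cos\th\bigr)=-\frac{1}{R/c}\bigl(-\xi'(t)\sin\th+\eta'(t)\cos\th\bigr).
\]
Thus the RSE for $\E_c$ at shadowing distance $R$ is literally the same ODE as the RSE for $\E$ at shadowing distance $c^{-1}R$. Since Theorem \ref{rot-F} assigns the rotation number purely in terms of the ODE, the identity $\ro_{\E_c}(R)=\ro_{\E}(c^{-1}R)$ follows immediately.

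Neither part presents any genuine obstacle. One could alternatively obtain (2) from the spatial-invariance discussion in \S\ref{s21}: the dilation $\M(\vec x)=c\vec x$ carries an SC to $\vr_0$ of shadowing distance $R'$ into an SC to $c\vr_0$ of shadowing distance $cR'$, while the angle $\th(t)$ appearing in the moving polar coordinates \x{mpc} is unaffected by a pure dilation, so the rotation number is unchanged; matching $cR'=R$ recovers the same identity. The only bookkeeping point worth noting is that $c$ enters on the opposite side for the shadowing distance than one might naively guess, which is precisely why the formula reads $c^{-1}R$ rather than $cR$.
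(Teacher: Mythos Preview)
Your proposal is correct and follows essentially the same approach as the paper: part (1) is the same chain-rule computation $\|\vr_*'(s)\|=\|\vr_0'(\beta(\mu s))\|\beta'(\mu s)\mu=\mu$, and part (2) is exactly the paper's ``follows directly from the form of RSE \x{RSE}'' made explicit. Your alternative derivation of (2) via the spatial-invariance discussion in \S\ref{s21} is a nice addition but not needed.
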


\begin{proof} It is easy to calculate
    \[\|\vr'_*(s)\|=\|\vr_0'(\beta(\mu s))\|\beta'(\mu s)\mu=\alpha'(\beta(\mu s))\beta'(\mu s)\mu=\mu.\]
where $\alpha$, $\beta$ are as in Lemma \ref{inv}.  The second statement follows directly from the form of RSE \x{RSE}.\qed
\end{proof}

\begin{rmk} \lb{rmk33}
Up to translations,  the arc-length parametrization of $\E$ can be uniquely given by
    \[\vr_0(\tilde{s})=\z(\int_0^{\tilde{s}} \cos \varphi(s')\ds',\int_0^{\tilde{s}}\sin \varphi( s')\ds'\y),\]
where $\varphi(\tilde{s})$ is the angle between the $x$-axis and the tangent vector of $\E$ at $\tilde{s}$, whence the signed curvature function is $\kappa(\tilde{s})=\varphi'(\tilde{s})$.
The normalized arc-length parameterization of $\E$ is then $\vr_*(s)=\vr_0(\mu s)$ and 
    \be\label{arclength}\vr'_*(s)=\mu \big(\cos\psi_*( s),\sin\psi_*(s)\big),\quad \psi_*(s):=\varphi(\mu s).\ee
Remark that the geometry of a closed curve is determined by its signed curvature. See \cite{DC57} for reference.
\end{rmk}

\subsection{Critical shadowing distances} \lb{csd}

In the sequel, we consider  $C^1$ regular closed EC $\E$ as in \x{pa1} and \x{reg} with some given parameterization $\vr_0(t)$.
Write $\vr_0'(t)$ in polar coordinates as
    \be \lb{ec23}
    \vr'_0(t) \equiv B(t) \z( \cos \psi(t), \sin \psi(t)\y),
    \ee
where $B(t)=\|\vr_0'(t)\|>0$, and $\psi(t)$ is a continuous function which is uniquely determined by choosing
    \(
    \psi(0) \in[-\pi,\pi).
    \)
Moreover, due to the $2\pi$-periodicity of $\vr'_0(t)$, one has
    \be \lb{Avp}
    B(t+2\pi) \equiv B(t), \andq \psi(t+2\pi) \equiv 2\pi \om_0 + \psi(t),
    \ee
where
    \[
    \om_0= \om(\E):=\frac{1}{2\pi}(\psi(2\pi)-\psi(0))\in \Z
    \]
is the {\it rotation index} of  $\E$ (\cite{DC57}). It measures the complete turns given by the tangent vector field along a closed curve. Equivalently, rotation index $\om(\E)$ is the same as the winding number of its tangential curve $\E':\R\to \R^2, t\mapsto \vr_0'(t)$.

    \bb{rmk} \lb{C2}
Suppose further that $\E$ is $C^2$. Then the following two geometric invariants can be given explicitly:
    \begin{enumerate}
    \item  The rotation index $\omega$ of $\E$ is
    \[
    \om_0=\frac{1}{2\pi}\int_0^{2\pi}\psi'(t)\dt= \f{1}{2\pi}\oint_{\E'} \f{x\dy- y\dx}{x^2+y^2}.
    \]
    \item The signed curvature $\kappa$ of $\E$ is
    \be\label{curvature}\kappa(t)=\frac{det(\vr_0'(t),\vr_0''(t))}{\|\vr_0'(t)\|^3}=\frac{\psi'(t)}{B(t)}.\ee
    \end{enumerate}

    \end{rmk}

    \begin{lem} \label{upbound}
The function $\ro: R\mapsto \ro(R)$ is continuous  and has the following estimation
    \[
    |\ro(R)|\leq \frac{\ell_0}{2\pi R}\quad \forall R\in (0,\infty),
    \]
where $\ell_0=\ell(\E):=\int_0^{2\pi }\|\vr_0'(t)\|\dt$ is the algebraic perimeter of $\E$.
    \end{lem}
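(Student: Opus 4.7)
The plan is to rewrite the RSE in a form that makes the bound on $\theta'(t)$ manifest, then integrate, and finally read off both continuity and the quantitative estimate.

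First I would substitute the polar representation \x{ec23} of $\vr'_0(t)$ into the RSE \x{RSE}. A short trigonometric identity gives
\[
F_{\vr_0}(t,\th) = -\f{1}{R}\bigl(-B(t)\cos\psi(t)\sin\th + B(t)\sin\psi(t)\cos\th\bigr) = \f{B(t)}{R}\sin(\th-\psi(t)).
\]
This is the key simplification: the right-hand side of the RSE is now a bounded function with the uniform pointwise bound $|F_{\vr_0}(t,\th)|\le B(t)/R$, independent of $\th$.

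Next, for any solution $\th(t)=\th(t;\th_0)$ of the RSE, integration of $|\th'(t)|\le B(t)/R$ over $[0,t]$ gives
\[
|\th(t)-\th_0|\le \f{1}{R}\int_0^{t}B(s)\,\rd s\qquad \forall t\ge 0.
\]
Evaluating along the subsequence $t_n=2\pi n$, the $2\pi$-periodicity of $B$ recorded in \x{Avp} together with the definition $\ell_0=\int_0^{2\pi}B(s)\,\rd s$ yields $\int_0^{2\pi n}B(s)\,\rd s = n\ell_0$. Dividing by $2\pi n$ and using the definition \x{rhoF} of $\ro$ (which does not depend on how one takes the limit, by Theorem \ref{rot-F}), we conclude
\[
|\ro(R)| = \lim_{n\to\infty}\left|\f{\th(2\pi n;\th_0)-\th_0}{2\pi n}\right| \le \f{\ell_0}{2\pi R}.
\]
The estimate for $t\to -\infty$ is obtained identically.

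Finally, for continuity in $R$, I would invoke the continuity statement of Theorem \ref{rot-F}: the map $F\mapsto \ro(F)$ is continuous on $C(\R^2/2\pi\Z^2)$ in the $C^0$ norm. Since
\[
\|F_R - F_{R'}\|_{C^0} = \bigl\|B(\cdot)\sin(\th-\psi(\cdot))\bigr\|_\infty\,\bigl|R^{-1}-(R')^{-1}\bigr| \le \|B\|_\infty\bigl|R^{-1}-(R')^{-1}\bigr|,
\]
the family $\{F_R\}_{R>0}$ varies continuously in $C^0$ norm as a function of $R\in(0,\infty)$, so $R\mapsto \ro(R)$ is continuous. Nothing in this argument is delicate; the only thing to be careful about is that $F_R$ depends continuously on $R$ uniformly in $(t,\th)$, which is immediate from the polar form above.
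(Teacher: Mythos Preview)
Your proof is correct and follows essentially the same approach as the paper: bound $|\th'(t)|\le \|\vr_0'(t)\|/R$, integrate, and pass to the limit. The only cosmetic differences are that you first rewrite the RSE via the polar form \x{ec23} (the paper just bounds the original expression directly), and you spell out the continuity argument explicitly via Theorem~\ref{rot-F}, whereas the paper had already noted this continuity in the paragraph preceding Lemma~\ref{inv} and does not repeat it in the proof of the lemma.
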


\Proof  One has from Eq. \x{RSE} that
    \[
    |\th'(t)| \le \f{1}{R} {\|\vr'_0(t)\|}.
    \]
Hence
    \[
    |\th(t)|\le |\th_0| +  \f{1}{R} \int_0^t \|\vr'_0(s)\| \ds\qqf t\ge 0,
    \]
and
   \[
   |\ro(R)| \le \lim_{t\to+\oo} \f{1}{t}\z(|\th_0| +  \f{1}{R} \int_0^t \|\vr'_0(s)\| \ds\y) = \f{1}{2\pi R} \int_0^{2\pi} \|\vr'_0(s)\|\ds = \f{\ell_0}{2\pi R}.
    \]
\qed

Now we give a key lemma for the understanding of rotation numbers with small shadowing distances.

\begin{lem}\label{existenceper}
Consider the periodic ODE \x{per}. Assume that
    \be\label{con1} F(t,-\pi)>0,\qq F(t,0)<0,\quad \forall t\in \R.\ee
Then Eq. \x{per} admits two $2\pi$-periodic solutions such that $\th_+(t)\in (-\pi,0)$ and $\th_-(t)\in (0,\pi)$ for all $t$.
\end{lem}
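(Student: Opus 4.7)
The plan is to identify the $2\pi$-periodic solutions of \x{per} with fixed points of the Poincar\'e map $\P:\th_0\mapsto\th(2\pi;\th_0)$ and to locate two such fixed points, one in $(-\pi,0)$ and one in $(0,\pi)$, by the intermediate value theorem. Since $F\in C(\R^2/2\pi\Z^2)$ is bounded and $C^1$ in $\th$, the ODE has unique global solutions, so $\P$ is a well-defined orientation-preserving homeomorphism of $\R$. The $2\pi$-periodicity of $F$ in $\th$ together with uniqueness gives the translation covariance $\P(\th+2\pi)\equiv\P(\th)+2\pi$, and the equivalence between $2\pi$-periodic solutions in $t$ and fixed points of $\P$ follows by the same uniqueness argument.

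The first step is to establish the boundary inequalities $\P(-\pi)>-\pi$, $\P(0)<0$, and $\P(\pi)>\pi$. For the first, the solution $\th(t;-\pi)$ satisfies $\th'(0)=F(0,-\pi)>0$, so it enters the strip $\th>-\pi$ immediately; were it to return to $-\pi$ at a first time $t_1\in(0,2\pi]$, we would have $\th'(t_1)\le 0$, contradicting $F(t_1,-\pi)>0$. The argument for $\P(0)<0$ is symmetric, while $\P(\pi)>\pi$ follows from the covariance identity $\P(\pi)=\P(-\pi)+2\pi$. The continuous function $\th_0\mapsto\P(\th_0)-\th_0$ is then positive at $\th_0=-\pi$ and negative at $\th_0=0$, producing by IVT a fixed point $\th_+(0)\in(-\pi,0)$; on $[0,\pi]$ the sign pattern is reversed, yielding a second fixed point $\th_-(0)\in(0,\pi)$. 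These give the two candidate $2\pi$-periodic solutions $\th_+(t)$ and $\th_-(t)$ with the required initial values.

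It remains to verify that $\th_\pm(t)$ lie in the open intervals $(-\pi,0)$ and $(0,\pi)$ respectively for \emph{all} $t\in\R$, and this confinement is the only subtle point. For $\th_+$, both lines $\th=-\pi$ and $\th=0$ are transverse to the vector field pointing \emph{into} $(-\pi,0)$ (since $F(\cdot,-\pi)>0$ and $F(\cdot,0)<0$), so the strip is positively invariant: a putative first exit time from $(-\pi,0)$ would force $\th'_+$ to have the wrong sign at the exit, which is impossible. For $\th_-$, the lines $\th=0$ and $\th=\pi$ are instead transverse pointing \emph{out} of $(0,\pi)$; should $\th_-$ ever leave the strip it could never return, because the reverse crossing would again force $F$ to have the wrong sign on the boundary line, and this would contradict the $2\pi$-periodicity already in hand. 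The main obstacle of the proof is exactly this confinement check; the existence of the fixed points themselves is essentially immediate from IVT once the three boundary inequalities are established.
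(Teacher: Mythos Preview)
Your proof is correct and follows essentially the same approach as the paper: locating fixed points of the Poincar\'e map $\P$ via the barrier conditions at $\th=-\pi,0,\pi$. The paper packages the argument a bit more succinctly by observing directly that $\P([-\pi,0])\subset(-\pi,0)$ (positive invariance of the strip gives both the boundary inequalities and the confinement at once), and for $\th_-$ it reverses time to get $\P^{-1}([0,\pi])\subset(0,\pi)$, which yields the fixed point and the confinement in one stroke rather than via your ``can never return'' contradiction.
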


\Proof
Consider the associated Poincar\'e map $\P(\th_0):=\th(2\pi;\th_0)$. It follows from \x{con1} that $\P([-\pi, 0])\subset (-\pi ,0)$. Hence $\P$ admits a fixed point $\th^+\in (-\pi,0)$. The solution $\th_+(t):=\th(t;\th^+)$ is the desired $2\pi$-periodic solution. The existence of $\th_-(t)$ comes from the observation that $F(t,0)<0$ and $F(t,\pi)>0$, and then by simply reversing the time, one has $\P^{-1}([0,\pi])\subset (0,\pi)$. \qed

    \bb{thm} \lb{rho-01}
Let $\E$ be $C^1$ regular closed EC. Then there exists $\tl R>0$ such that
    \[
    0<R \le \tl R \q \Longrightarrow \q \ro(R) \equiv \om_0=\om(\E).
    \]
Moreover, for each $R\in (0,\tilde{R})$, RSE \x{RSE} admits at least two $2\pi$-periodic solutions.
    \end{thm}

\Proof Substitution of \x{ec23} to  RSE \x{RSE} leads to
    \be \lb{rse3}
    \th'  = - \f{1}{R}B(t) \cos \z(\th-(\psi(t)-\pi/2) \y)\in \ctper.
    \ee

Suppose first that $\psi(t)$ is $C^1$.  Let
    \be \lb{ch1}
    \phi:=\th-(\psi(t)-\pi/2)
    \ee
in \x{rse3}. Then $\phi=\phi(t)$ satisfies
    \be\label{rse40}
    \phi'=-\frac 1 R B(t)\cos \phi-\psi'(t)=:F_R(t,\phi).
    \ee
Take
    \be\label{radii1}
    \tilde{R}=\min_{t\in [0,2\pi] }\frac{B(t)}{|\psi'(t)|}=:R_{min}>0.
    \ee
Then for $R\in (0,\tilde{R})$,  $F_R$ satisfies condition \x{con1}. Hence Eq. \x{rse40} admits two $2\pi$-periodic solutions 
\be\label{persol1}\phi_R(t;\phi_R^{\pm})\subset (\mp\pi ,0),\quad t\in \R\ee

 Choose $\th_R^{\pm}:=\phi_R^{\pm}+\psi(0)-\pi/2$.
Then
    \be\label{persol}
    \theta_R(t;\th_R^{\pm})\equiv \psi(t) - \f{\pi}{2} + \phi_R(t;\phi_R^{\pm})
    \ee
are solutions of Eq. \x{rse3}. Therefore
    \[
    \ro(R)=\lim_{t\to \infty} \frac{\th_R(t;\th_R^{\pm})}{t}=\lim_{t\to\infty}\frac{\psi(t)}{t}=\omega_0,\quad\forall R\in (0,\tilde{R}).
    \]
See the second equality of \x{Avp} and the definition of rotation index $\om_0$.

In general, $\psi(t)$ is only $C^0$. By the Weierstrass theorem, we can pick up any $\tilde{\psi} (t)\in C^1(\R/{2\pi \Z})$ such that
    \[\|\psi-\tilde{\psi}\|=\max_{t\in [0,2\pi]}|\psi(t)-\tilde{\psi}(t)|<\frac {\pi}{2}.\]
Now let us modify the transformation \x{ch1} as
    \[
    \phi:=\th-(\tilde{\psi}(t)-\pi/2).
    \]
Then Eq. \x{rse3} becomes

    \[\phi'=-\frac 1 R B(t)\cos (\phi+\tilde{\psi}(t)-\psi(t))-\tilde{\psi}'(t)=:\tilde{F}_R(t,\phi).\]
Take
    \be\label{radii2}\tilde {R}=\min_{t\in [0,2\pi]}\frac{B(t)\cos (\tilde{\psi}(t)-\psi(t))}{|\tilde{\psi}'(t)|}>0.\ee
Then for $R\in (0,\tilde{R})$,  $\tilde{F}_R$ still satisfies condition \x{con1}. Applying similar arguments as before, we conclude that $\ro(R)=\omega_0$ for $R\in (0,\tilde {R})$.
\qed

By Remark \ref{C2}, the quantity $R_{min}$ in \x{radii1} is a geometric invariant which is just the minimum of the radius of curvature of $\E$.

    \begin{rmk} \lb{better}
     It is possible that a well-chosen $\tilde{\psi}$ could make the estimation of $\tilde{R}$ in \x{radii2} be better than $R_{min}$.
     \end{rmk}

The following result is concerned with the rotation number for large shadowing distances.

    \bb{thm} \lb{mono}
Assume that $\E$ is a $C^1$ closed EC. Then 
    \be \lb{rho-ra}
    \ro(R) = \f{\A_0}{2\pi R^2} +o\z(\f{1}{R^2}\y)\qq \mbox{as } R\to +\oo,
    \ee
where
    \[
    \mathcal{A}_0= \A(\E):= \oint_\E x\dy
    = - \oint_\E y\dx =  \f{1}{2}\oint_\E x\dy-y\dx
    \]
is the algebraic area enclosed by $\E$.  In particular, $\ro(R)$ is non-increasing (resp. non-decreasing) if $\mathcal{A}_0>0$ (resp. $\mathcal{A}_0<0$) for $R$ large enough.%
    \end{thm}

\Proof Set $\lambda:=1/R$. Then Eq. \x{RSE} writes as
    \be \lb{rse4}
    \th'=  \la \vr'_0(t)\d (\sin \th, -\cos \th),\quad {where}\, \,\vr_0(t)=(\xi(t),\eta(t)).
    \ee
Here we extend the parameter $\la$ to $\R$.  The solution $\th(t;\th_0,\la)$ is analytic in $(\th_0,\la)$. By \x{rse4}, at $\la=0$, one has
    \[
    \th(t;\th_0,0) \equiv \th_0.
    \]
Consider the power series of  $\th(t;\th_0,\la)$ of $\lambda$ at $\lambda=0$,
    \be \lb{Expand}
    \th(t;\th_0,\la) = \th_0 + \sum_{k= 1}^\oo\th_k(t)\lambda^k,
    \ee
where $\th_k(t)=\th_k(t;\th_0)$. Inserting expansions \x{Expand}  into Eq. \x{rse4}, we obtain
    \beaa
    \th'_1(t) \EQ \vr'_0(t)\d (\sin \th_0, -\cos \th_0), \\
    \th'_2(t) \EQ \z(\vr'_0(t)\d (\cos \th_0, \sin \th_0)\y) \th_1(t),
    \eeaa
with initial conditions $\th_{1}(0)=\th_2(0)=0$. One has
    \[
    \th_1(t) = (\vr_0(t)-\vr_0(0))\d (\sin \th_0, -\cos \th_0).
    \]
    In particular, $\th_1(2\pi)=0$.
Inserting the solution $\th_1(t)$ into the equation of $\th_2'$ and integrate over $[0,2\pi]$, we get
 \beaa
 \th_2(2\pi)\EQ \int_0^{2\pi}\th_2'(t)\dt\\
 \EQ \sin^2\th_0\int_0^{2\pi}  \xi(t)\rd\eta(t)-\cos^2\th_0\int_0^{2\pi}\eta(t)\rd\xi(t)\\
 \EQ (\sin^2\th_0+\cos^2\th_0)\int_0^{2\pi} \xi(t)\rd\eta(t) =\mathcal{A}_0,
 \eeaa
where other terms in the integrant $\th_2'(t)$ vanishes due to $2\pi$ periodicity of $\vr_0(t)$ and $\vr_0'(t)$.

Going to the Poincar\'e maps corresponding to Eq. \x{rse4}
    \[ 
    \P_\la(\th_0)=\P(\th_0,\la):=\th(2\pi;\th_0,\la), \qq \th_0\in \R,
    \] 
we have obtained the expansion
    \be \lb{Pla}
    \P_\la(\th_0)=\th(2\pi;\th_0,\la) = \th_0 + \A_0 \la^2 + o(\la^2),\qq \la\to0.
    \ee
Moreover, the expansion \x{Pla} is uniform in $\th_0\in[0,2\pi]$ and in $\th_0\in \R$ as well.

Let $H_\al(\th_0):=\th_0 +2\al\pi$ denote the rigid rotation. For any $\e>0$, \x{Pla} means that there exists $\de>0$ such that
    \[
    H_{(\A_0 - \e)\la^2/2\pi}(\th_0)\equiv \th_0 + (\A_0 - \e) \la^2\le \P_\la(\th_0)\le \th_0 + (\A_0 + \e)\la^2\equiv H_{(\A_0 + \e)\la^2/2\pi}(\th_0)
    \]
for all $\th_0\in \R$ and all $|\la|<\de$. It follows from the definition of rotation number \x{rot2} that
    \[
    (\A_0 -\e)\la^2/2\pi\le \ro(\P_\la) \le (\A_0 + \e)\la^2/2\pi,\qqf |\la| <\de.
    \]
This gives the asymptotic formula \x{rho-ra} for $\ro(R)=\ro(\P_{1/R})$ as $R\to+\oo$.

The monotonicity of $\ro(R)$ follows from \x{Pla} and the assumption $\A_0\ne 0$.
\qed

Now we are going to introduce two important concepts for regular closed (escaping) curves.

 \bb{defn} \lb{CSD}
{\rm
For any $C^1$ regular closed EC $\E$, we define the {\it critical shadowing distance} 
to $\E$ as
    \[
    \ul{R}=\ul{R}(\E):= \sup \z\{R_0\in(0,+\oo): \ro(R) \equiv \om_0 =\om(\E) \mbox{ on } (0,R_0] \y\}. 
    \]
}
    \end{defn}

Let us introduce the following hypothesis $\bf{(H)}$ on $C^1$ regular closed ECs $\E$:
    \[
    \bf{(H)}: \qq \om_0=\om(\E) \ne 0,
    \andq \A_0=\A(\E)\ne 0.
    \]

    \bb{rmk} \lb{Hy}
The hypothesis $\bf{(H)}$ is always verified if $\E$ is a non trivial Jordan curve (a simple closed curve).  On the contrary,  it can be the case  that $\omega_0=0$ if $\E$ is a figure eight making by two circles touching at the origin, and $\mathcal{A}_0$ any prescribed number by varying the areas of the two disks.
    \end{rmk}

    \bb{defn}\lb{TSD}
{\rm
For any $C^1$ regular closed EC $\E$ satisfying hypothesis $\bf{(H)}$, we define the {\it turning shadowing distance}
to $\E$ as
    \[
    \overline{R}=\overline{R}(\E):=\inf \{R^*\in [\underline{R},+\infty):  \ro(R)\neq \omega_0 \,\text{and is monotone in }\, R\in (R^*,+\infty)\}.
    \]
}
    \end{defn}

Due to Theorem \ref{rho-01} and Theorem \ref{mono}, one sees that
    \be \lb{RR0}
    0<\ul{R}(\E) \le \overline{R}(\E)<+\oo
    \ee
for any $C^1$ regular closed curve $\E$ satisfying hypothesis $\bf{(H)}$. We will come back to \x{RR0} for the equality of $\ul{R} (\E) = \ol{R}(\E)$ in \S \ref{fifth}.

    \begin{thm} \lb{circ}
Let $\E$ be a regular $C^2$ closed curve with $\omega_0=\omega(\E)\ne 0$. Then an upper bound for the CSD is given by
    \be \lb{ulr-u1}
    \ul{R}(\E)\leq \frac{\ell_0}{2\pi |\omega_0|}=\frac{\mu}{|\omega_0|},
    \ee
where $``="$ holds if and only if $\E $ is a circle. Here $\ell_0= \int_0^{2\pi }\|\vr_0'(t)\|\dt$ is the algebraic perimeter of  $\E$ as before.
    \end{thm}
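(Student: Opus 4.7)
The plan is to establish the inequality first, then prove the ``iff'' characterization in both directions.

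For the upper bound, I would combine Lemma \ref{upbound} with the continuity of $\ro(R)$ from Theorem \ref{rot-F}. By Definition \ref{CSD}, $\ro(R) \equiv \om_0$ on $(0,\ul{R}(\E))$, so continuity extends this to $\ro(\ul{R}(\E)) = \om_0$. Plugging $R = \ul{R}(\E)$ into $|\ro(R)| \le \ell_0/(2\pi R)$ immediately yields $|\om_0| \le \ell_0/(2\pi\ul{R}(\E))$, which rearranges to the desired bound \eqref{ulr-u1}.

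For the forward direction of the equality claim, I would rewrite RSE \eqref{RSE} using the polar representation \eqref{ec23} in the compact form
\[
\th'(t) = -\frac{B(t)}{R}\sin\bigl(\psi(t) - \th(t)\bigr).
\]
At $R = \ul{R}(\E)$, the rotation number equals the integer $\om_0$, so Theorem \ref{circledyn}(1) furnishes a solution satisfying $\th(t+2\pi) - \th(t) \equiv 2\pi\om_0$. Integrating the displayed ODE over $[0,2\pi]$ and invoking the hypothesis $\ul{R}(\E) = \ell_0/(2\pi|\om_0|)$ produces
\[
2\pi|\om_0| = \frac{1}{\ul{R}(\E)}\biggl|\int_0^{2\pi} B(t)\sin\bigl(\psi(t)-\th(t)\bigr)\dt\biggr| \le \frac{\ell_0}{\ul{R}(\E)} = 2\pi|\om_0|.
\]
Saturating this chain of inequalities forces $\sin(\psi(t)-\th(t))$ to have constant sign with unit absolute value almost everywhere; by continuity, $\psi(t) - \th(t) \equiv \pm\pi/2$. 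Differentiating this identity and substituting back into the ODE yields $\psi'(t) \equiv \mp B(t)/\ul{R}(\E)$, so by \eqref{curvature} the signed curvature $\kappa(t) = \psi'(t)/B(t)$ is constantly $\mp 1/\ul{R}(\E)$. A $C^2$ regular closed curve with constant signed curvature must be a circle, concluding this implication.

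For the converse, I would verify directly that a circle of radius $r$ attains equality. Parametrizing $\vr_0(t) = r(\cos t, \sin t)$ gives $\ell_0 = 2\pi r$, $\om_0 = 1$, and RSE reduces under the change $\phi = \th - t$ to the autonomous equation $\phi' = -(r/R)\cos\phi - 1$. For $R \le r$ this equation has an equilibrium at $\phi^*$ with $\cos\phi^* = -R/r$, so every trajectory stays bounded and $\ro(R) = 1 = \om_0$; for $R > r$ the right-hand side is strictly negative, $\phi(t) \to -\oo$, and $\ro(R) < 1$. Hence $\ul{R}(\E) = r = \ell_0/(2\pi|\om_0|)$, completing the equivalence.

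The main obstacle I anticipate is justifying that the closed trajectory of the Poincar\'e map at $R = \ul{R}(\E)$ promised by Theorem \ref{circledyn}(1) corresponds to a genuine $2\pi$-periodic solution of the ODE, rather than a solution of higher period $2p\pi$. This relies on the integrality of $\om_0$: writing $\om_0 = \om_0/1$ in lowest terms forces the minimal Poincar\'e-map period to be $1$, so that the closed trajectories furnished by the theorem are genuinely $2\pi$-periodic with $\th(2\pi)-\th(0) = 2\pi\om_0$, which is exactly what the integration step requires.
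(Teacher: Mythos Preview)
Your proof is correct and rests on the same core idea as the paper's: integrate the reduced shadowing equation over one period and use saturation of the resulting inequality to force constant curvature.

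The executions differ slightly. The paper passes to the normalized arc-length parametrization and the shifted variable $\phi=\th-\psi-\pi/2$ (equation \eqref{rse40}), then shows directly at $R^*=\mu/\om_0$ that
\[
\P_\phi(\phi_0)-\phi_0=-\om_0\!\int_0^{2\pi}\cos\phi(s;\phi_0)\ds - 2\pi\om_0\le 0
\]
for \emph{every} $\phi_0$; this yields $\ro_\phi(R^*)\le 0$ without invoking Theorem~\ref{circledyn}, and equality forces $\cos\phi\equiv -1$, hence $\psi'\equiv\om_0$. Your version instead stays in the original parametrization, appeals to Theorem~\ref{circledyn} to produce a single $2\pi$-periodic orbit at $R=\ul R$, and then saturates $\bigl|\int B\sin(\psi-\th)\bigr|\le\int B$. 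The paper's route is marginally cleaner (no need for the periodic-solution step and the associated ``$\om_0=\om_0/1$ in lowest terms'' discussion), while yours has the virtue of explicitly verifying the circle case and not relying on a reparametrization. Both directions of the ``iff'' are handled; note that the paper leaves the converse (circle attains equality) implicit, deferring to Remark~\ref{c11}, whereas you check it outright.
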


\Proof By reversing time if necessary, we assume that $\om_0>0$. For $R\leq \ul{R}$, one has $\ro(R)=\omega_0$. It follows from Lemma \ref{upbound} that
    \[
    \omega_0=\ro(R)\leq \frac{\ell_0}{2\pi R}.
    \]
Hence
    $$
    R\leq \frac{\ell_0}{2\pi \omega_0}=\frac{\mu}{\omega_0}, \andq \ul{R}(\E)\leq \frac{\mu}{\omega_0}.
    $$

Now we consider the case $R^*=\f{\mu}{\om_0}$. Take the normalized arc-length parametrization of the form \x{arclength} for $\E'$ and plug it into Eq. \x{rse40}. We obtain
    \be\label{rse411}\phi'=-\omega_0\cos \phi-\psi'(s),\ee
where $\phi(s)=\th(s)-\psi(s)-\frac{\pi}{2}$, and $\th=\th(s)$ solves the RSE \x{rse3}. Integration from $0$ to $2\pi$ in \x{rse411} gives
    \[
    \phi(2\pi;\phi_0)-\phi_0=-\omega_0\int_0^{2\pi}\cos \phi(s;\phi_0)ds -2\pi \omega_0\leq 0.
    \]
It follows that the rotation number $\rho_{\phi}(R^*)\leq 0$ for \x{rse411}. In particular, the Poincar\'e map $\P_{\phi}(\phi_0) :=\phi(2\pi;\phi_0)=\phi_0$ has a solution if and only if $\phi(s,\phi_0)\equiv \pm \pi$ whence $\psi'(s)\equiv \omega_0$.
Notice that the curvature of $\E$ is given by $\kappa(s)=\mu^{-1}\psi'(s)$. We conclude that $\rho_{\phi}(R^*)\leq 0$ whence $\ro(R^*)=\omega_0+\ro_{\phi}(R^*)\leq\omega_0$ with $``="$ if and only if $\E$ has constant curvature, i.e. $\E$ is a circle.\qed

\subsection{Types of shadowing curves}\lb{scs}

In the sequel, we consider regular $C^1$ closed ECs $\E$ satisfying hypothesis $\bf{(H)}$. Up to a reversal of time, we may assume $\omega_0=\omega(\E)>0$. The characterization to the types of SCs to $\E$ is based on the  observation that the range of the rotation number function $\ro(R)$ contains at least a non-trivial interval $\z(0,\, \om_0\y]$, followed by Theorem \ref{rho-01} and Theorem \ref{mono}. More precisely,  for any $\hat\ro\in (0,\omega_0]$, $\ro^{-1}(\hat\ro)\neq \emptyset$.

In order to better describe the dynamical property of the shadower, it is convenient to identify $\R^2$ to the complex plane $\C$. The EC $\vr_0(t)$ and SC $\vr(t)$ are then written as $r_0(t)\in C^1(\R, \C)$ and $r(t)\in C^1(\R, \C)$, with
    \be\label{complex}
    r(t)=r_0(t)+ Re^{i\theta(t)},
    \ee
where $\theta(t)$ satisfies the RSE \x{RSE}. Without ambiguity, the Poincar\'e map associated to \x{RSE} could be written correspondingly as
    \[
    \P: \mcc_1\to \mcc_1,\quad \P(e^{i\th}):= e^{i \P(\th)}\qqf \th\in \R.
    \]
where $\mcc_1=\{z\in \C: |z|=1\}$ is the unit circle. For any irreducible rational number $q/p$ with $p>0$, denote
    \[
    \Th_{q/p}= \Th_{q/p,\P}:= \z\{\vth\in \R: \P^p(\vth)=\vth +2 q\pi \y\}\subset \R.
    \]

The dynamical behavior of SCs follows directly from the circle dynamics of the RSE, see Theorem \ref{circledyn}.

    \bb{thm} \lb{main1}
    \begin{enumerate}
    \item Let $R \in (0,\ul{R}]$. Then $\Th_{\om_0/1} \ne \emptyset$. Moreover,

\bu for any
    \(
    \th_*\in\Th_{\om_0/1},
    \)
    \(
    r_{R,\th_*}(t)\equiv r_0(t) + R e^{i\th_{R,\th_*}(t)} 
    \)
is a $2\pi$-periodic SC to $\E$; and

\bu for any
    \(
    \th_0\in(\Th_{\om_0/1})^c,
    \)
which may be void, the SC
    \(
    r_{R,\th_0}(t)\equiv r_0(t) + R e^{i\th_{R,\th_0}(t)}
    \)
will tend to some $2\pi$-periodic SC as $t\to+\oo$ or as $t\to -\oo$.

    \item  For any irreducible rational number
    \(
    q/p\in (0, \om_0),
    \)
where $p\ge 1$, there must be some shadowing distance $\Rpq > \ul{R}$ such that
    \be \lb{SC42}
    \ro(\Rpq)= q/p.
    \ee
Accordingly, with such a shadowing distance $R=\Rpq$, any $\th_* \in \Th_{q/p}\ (\ne \emptyset)$ gives a $2p\pi$-periodic SC $r_{\rpq, \th_*}(t)$ to $\E$, and, for any $\th_0 \in (\Th_{q/p})^c$, the SC
    \(
    r_{\rpq,\th_0}(t)
    \)
will tend to some $2p\pi$-periodic SC as $t\to+\oo$ or as $t\to -\oo$.
    \end{enumerate}
    \end{thm}

\Proof These results are clear from Theorem \ref{circledyn}.  For example, in Case 1, for any $\th_*\in\Th_{\om_0/1}$, we have that
    \[
    \th(t+2\pi; \th_*) = \th(t; \P(\th_*))= \th(t;\th_* +2\om_0\pi) \equiv \th(t;\th_*) +2\om_0\pi.
    \]
Thus both $r_0(t)$ and $Re^{i \th_{R,\th_*}(t)}$ are $2\pi$-periodic. Therefore $r_{R,\th_*}(t)=r_0(t)+ Re^{i \th_{R,\th_*}(t)}$ is necessarily $2\pi$-periodic.

For Case 2, the existence of $\rpq$ in \x{SC42} is an immediate result of the properties of rotation numbers. Moreover, in this case, for any $\th_*\in \Th_{q/p}$, $\rpq e^{i \th_{\rpq,\th_*}(t)}$ is now $2p\pi$-periodic. The others are similar.\qed

    \bb{thm} \lb{main2}
For any irrational number
    \(
    \ro_0 \in (0, \om_0)\bs \Q,
    \)
there must be some shadowing distance $\Rrh > \ol{R}$ such that
    \be \lb{SC52}
    \ro(\Rrh)=\ro_0.
    \ee
Moreover, with such a shadowing distance $R_{\ro_0}$, any SC $
   \mcs_{\Rrh,\th_0}$
 is dense in the shadowing domain 
\be \lb{Dr}
    \D_{\Rrh}:=\z\{r_0(t) + \Rrh z: t\in\R, \, z\in \mcc_1 \y\}
    \ee
that is
   \be\lb{SC54}
    \ol{\z\{ r_0(t) + \Rrh e^{i \th_{\Rrh,\th_0}(t)}:\ t\in \R\y\}}=\D_{\Rrh},
    \ee

    \end{thm}

\Proof As before, the existence of $\Rrh$ in \x{SC52} follows from the continuity property of $\ro(R)$.

Denote $\P_0:= \P_{\Rrh}$. One has
    \be\lb{den}\bb{split}
    \th_{\Rrh,\th_0}(t+2 k\pi) &\equiv \P_0^k (\th_{\Rrh,\th_0}(t))\q \mbox{on }\R, \\
    e^{i\th_{\Rrh,\th_0}(t+2 k\pi)} &\equiv \P_0^k (e^{i \th_{\Rrh,\th_0}(t)})\q \mbox{on }\mcc_1.
    \end{split}
    \ee
Since $\P_0:\mcc_1\to\mcc_1$ is an analytic diffeomorphism of the circle and $\ro(\P_0) =\ro_0$ is irrational, it is known from the Denjoy theorem that $\P_0$ is minimal in $\mcc_1$
    \be \lb{den1}
    \ol{\z\{\P_0^k(w): k\in \Z\y\}}=\mcc_1\qqf w\in \mcc_1.
    \ee

Let $\th_0\in \R$ and $t\in \R$ be arbitrarily given. To apply \x{den1}, we choose $w=w_t:= e^{i \th_{\Rrh,\th_0}(t)}\in \mcc_1$. Then, for any point $z\in \mcc_1$, there must be a sequence $\{k_m=k_m(w_t,z)\}_{m\in \N}\subset \Z$ such that
    \[
    \lim_{m\to+\oo} \P_0^{k_m} (e^{i w_t})=z.
    \]
By \x{den}, we know that, as $m\to+\oo$,
    \[
    r_0(t+ 2k_m\pi) + \Rrh e^{i \th_{\Rrh,\th_0}(t+2k_m\pi)}\equiv r_0(t) + \Rrh \P_0^{k_m}(w_t) \to  r_0(t) + \Rrh z.
    \]
That is, any point $r_0(t) + \Rrh z\in \D_{R_{\ro_0}}$ can be approximated by points from $\mcs_{\Rrh,\th_0}$. Due to the trivial inclusion $\mcs_{\Rrh,\th_0}\subset \D_{R_{\ro_0}}$, we have proved the density \x{SC54}.

Since the set $\D_{\Rrh}$ in \x{Dr} is parameterized by $t\in \R$ and $z\in \mcc_1$, it is easy to see that $\D_{\Rrh}\subset \C$ is a planar domain.
\qed

In the above proof, the Denjoy theorem is crucial.

\subsection{Regularity of Shadowing curves}\lb{scs2}
Given a regular smooth closed escaping curve $\E$, we will show that when the shadowing distance $R$ is small, the shadowing curves remains regular; while for large $R$, turning points (cusps) appear.

\begin{defn} Let $\vr: \R\to \R^2$ be a $C^2$ curve.  A point $\vr(t_0)$ is called a singular point if $\vr'(t_0)=0$; it is called a turning point or an (ordinary)\footnote{The term ordinary cusp, means a singularity of type $3/2$, i.e. locally of the normal form $(t^2,t^3)$.} cusp, if $\vr'(t_0)=0$ and $\vr''(t_0)\neq 0$.
\end{defn}

\begin{defn}(\cite{DC57},\cite{FN90}) Let $\vr: \R\to \R^2$  be a $C^2$ curve. The parallel curve or offset curve $\vp_d: \R\to \R^2$ of $\vr_0$ at a signed distance $d$ is defined as 
\[\vp_d(t)=\vr_0(t)+dN(t), \quad \text{with unit outer normal}\, N(t)\]
The curve $\vp_d$ is called outer (resp. inner) parallel curve if $d>0$ (resp. $d<0$).
\end{defn}

Indeed, the parallel curves $\vp_{\pm d}$ are the envelops of a family of congruent circles of radius $d$ centered on the progenitor curve $\vr_0$.  Physically, the parallel curve is Huygen's wave front  for a source, of the form given by the progenitor curve, emitting waves with unit speed.

\begin{lem}\label{sig} Let $\vr_0: \R\to \R^2$  be a regular $C^2$ closed escaping curve, then singular points (if exist) of any shadowing curve with shadowing distance $R>0$ lie on the parallel curves $\vp_{\pm R}$ of $\vr_0$.
\end{lem}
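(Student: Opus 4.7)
The plan is to read the conclusion directly off the shadowing equation \x{SE}, combined with the constant-distance identity \x{sdr}. Suppose $\vr(t)$ is a shadowing curve to $\vr_0$ with shadowing distance $R$, and that $\vr'(t_0)=0$ for some $t_0\in\R$. By \x{SE}, one has
\[
\vr'(t_0)=\f{\vr_0'(t_0)\cdot\bigl(\vr(t_0)-\vr_0(t_0)\bigr)}{\|\vr(t_0)-\vr_0(t_0)\|^2}\,\bigl(\vr(t_0)-\vr_0(t_0)\bigr).
\]
Since $\|\vr(t_0)-\vr_0(t_0)\|=R>0$ by \x{sdr}, the vector $\vr(t_0)-\vr_0(t_0)$ is nonzero, so $\vr'(t_0)=0$ is equivalent to the scalar coefficient vanishing, i.e.
\[
\vr_0'(t_0)\cdot\bigl(\vr(t_0)-\vr_0(t_0)\bigr)=0.
\]

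Next I use the $C^2$ regularity of $\vr_0$: because $\vr_0'(t_0)\neq \vec 0$, the unit tangent $T(t_0):=\vr_0'(t_0)/\|\vr_0'(t_0)\|$ together with a unit normal $N(t_0)$ forms an orthonormal frame of $\R^2$. The orthogonality above says precisely that $\vr(t_0)-\vr_0(t_0)\perp T(t_0)$, hence $\vr(t_0)-\vr_0(t_0)$ is parallel to $N(t_0)$. Combined with the length condition $\|\vr(t_0)-\vr_0(t_0)\|=R$, this gives $\vr(t_0)-\vr_0(t_0)=\pm R\,N(t_0)$, whence
\[
\vr(t_0)=\vr_0(t_0)\pm R\,N(t_0)=\vp_{\pm R}(t_0),
\]
which is the desired conclusion.

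There is no genuine obstacle: the whole content is a one-line vector decomposition of \x{SE} at a singular point. The only minor point worth noting is the sign ambiguity, reflecting the fact that the shadower may be located on either side of the tangent line through $\vr_0(t_0)$, so both the outer and inner parallel curves $\vp_{+R}$ and $\vp_{-R}$ are genuinely needed in the statement. In particular, a singular point of $\vr$ is automatically a point of tangency between the shadowing circle of radius $R$ around $\vr_0(t_0)$ and the envelope $\vp_{\pm R}$, which motivates the cusp analysis that follows.
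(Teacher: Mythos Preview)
Your proof is correct and follows essentially the same argument as the paper: both use the shadowing equation to observe that $\vr'(t_0)=0$ forces $\vr_0'(t_0)\cdot(\vr(t_0)-\vr_0(t_0))=0$, and then combine orthogonality to the tangent with the distance constraint $\|\vr(t_0)-\vr_0(t_0)\|=R$ to conclude $\vr(t_0)=\vr_0(t_0)\pm R\,N(t_0)$. Your version is just slightly more explicit about the orthonormal frame.
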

\Proof Let  $\vr_0: \R\to \R^2$ be an escaping curve.  It follows from \x{eq1} and \x{kt} that the shadowing curve $\vr(t)$ satisfies
\be\label{sing2}\vr'(t)=\alpha(t)(\vr(t)-\vr_0(t)),\quad \text{with}\,\alpha(t)=\frac{\vr_0'(t)\cdot (\vr(t)-\vr_0(t))}{\|\vr(t)-\vr_0(t)\|^2}\ee
Hence $\vr'(t)=0$ iff $\alpha(t)=0$, i.e. $\vr(t)-\vr_0(t))=d(t)N(t)$ for some constant $d(t)$, where $N(t)$ is the unit outer normal of $\vr_0(t)$. Since $\|\vr(t)-\vr_0(t)\|\equiv R$, it follows $d(t)=\pm R$.
\qed

\medskip
In the following, in order to have further description of the existence of turning points of a shadowing curve, we restrict ourself to escaping curves which are strictly convex simple closed curves. Notice that the rotation index $\omega$ of a simple closed curve is always $\pm 1$, up to changing orientation, we may always assume that the $\omega(\E)=1$ when given a simple closed escaping curve $\E$.


\begin{lem} \label{para}Let $\vr_0:\R\to\R^2$ be a $C^3$ strictly convex simple closed curve, $R_{min}$ and $R_{max}$ are the minimum and maximum of the radius of curvature of $\vr_0$ respectively. Then for $|d|<R_{min}$ or $|d|>R_{max}$, the parallel curve $\vp_d$ of $\vr_0$ with signed distance $d$ is a $C^2$ convex simple closed curve.
\end{lem}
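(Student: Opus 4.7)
The plan is to work in the arc-length parameterization $s\in[0,\ell_0]$ of $\vr_0$. Up to reversing orientation we may assume $\vr_0$ is traversed counterclockwise, so that the signed curvature $\kappa(s)=\varphi'(s)$ (cf. Remark \ref{rmk33}) is strictly positive and takes values in $[1/R_{max},1/R_{min}]$. Writing $T$ and $N$ for the unit tangent and unit outer normal, the Frenet formula $N'(s)=\kappa(s)T(s)$ (valid for this orientation) gives, by direct differentiation, the key identity
\[
\vp_d'(s)=(1+d\kappa(s))T(s).
\]

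First I would establish regularity by a sign analysis of $1+d\kappa(s)$. If $|d|<R_{min}$, then $|d\kappa(s)|\le |d|/R_{min}<1$, so $1+d\kappa(s)>0$ throughout. If $d\ge R_{max}>0$, then $1+d\kappa(s)\ge 1>0$. If $d\le -R_{max}<0$, then $d\kappa(s)\le -|d|/R_{max}<-1$, so $1+d\kappa(s)<0$ throughout. Thus in all three ranges $\vp_d'(s)\ne 0$ on $\R$. The $C^3$ regularity of $\vr_0$ passes to $C^2$ regularity of $N$ and hence of $\vp_d=\vr_0+dN$; together with the obvious $\ell_0$-periodicity this makes $\vp_d$ a $C^2$ regular closed curve.

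Next, a short computation of $\vp_d''$ using $T'=\kappa N_{in}$ with $N_{in}=-N$ and $\det(T,N_{in})=1$ yields the signed curvature
\[
\kappa_d(s)=\frac{\det(\vp_d',\vp_d'')}{|\vp_d'|^3}=\frac{(1+d\kappa)^2\kappa}{|1+d\kappa|^3}=\frac{\kappa(s)}{|1+d\kappa(s)|}>0,
\]
so $\vp_d$ has strictly positive signed curvature everywhere.

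The main step, and what I expect to be the subtlest, is to upgrade this local positive-curvature information to the global conclusion that $\vp_d$ is a convex simple closed curve. The plan is to read off the rotation index of $\vp_d$ and then invoke a classical theorem. Because $1+d\kappa(s)$ has constant sign in each case, the unit tangent of $\vp_d$ equals either $T(s)$ or $-T(s)$ uniformly in $s$, so the tangent angle of $\vp_d$ differs from $\varphi(s)$ by the fixed constant $0$ or $\pi$. Hence the total turning of the tangent of $\vp_d$ over one period equals that of $\vr_0$, namely $2\pi\omega(\E)=2\pi$, so the rotation index of $\vp_d$ is $+1$. The proof then concludes by invoking the standard theorem (e.g., do Carmo \cite{DC57}) that a $C^2$ regular closed planar curve with everywhere strictly positive signed curvature and rotation index $+1$ is necessarily a convex simple closed curve.
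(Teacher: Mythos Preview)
Your proof is correct and follows essentially the same route as the paper: work in the arc-length parameter, show that $\vp_d'(s)$ is a constant-sign scalar multiple of $T(s)$ under the hypothesis on $|d|$, read off that the curvature of $\vp_d$ does not change sign and that the rotation index is $\pm 1$, and then conclude simplicity and convexity from the monotone turning angle. Your case-by-case sign analysis and the explicit formula $\kappa_d=\kappa/|1+d\kappa|$ are a bit more detailed than the paper's version, but the strategy---including the final appeal to the classical fact that a closed curve with strictly monotone turning angle of total variation $2\pi$ is simple and convex---is the same.
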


\Proof Without loss of generality, we may put $\vr_0$ in the arc-length parametrization $\vr_0(s)$. Let  $T(s)=\vr_0'(s)$ and $N(s)$ denote the unit tangent vector and outer normal vector respectively, then
\[\vp_d'(s)=\vr_0'(s)+d N'(s)=(1-d\kappa_0(s))T(s)=:B_d(s)T(s),\]
where $\kappa_0(s)$ is the (signed )curvature of $\vr_0$. Write $T(s)=(\cos \varphi(s),\sin\varphi(s))$,   then $\kappa_0(s)=\varphi'(s)$ is always positive or negative by convexity of $\vr_0$. For $|d|<R_{min}$ or $|d|>R_{max}$, the (signed) curvature of $\vp_d$, followed by \x{curvature}, 
\[\kappa_d(s)=\frac{\varphi'(s)}{B_d(s)}\]
is always positive or negative, hence $\vp_d(s)$ is convex, having no singularity.  In particular, since 
\[\vp_d'(s)=B_d(s)((\cos \varphi(s),\sin\varphi(s)),\]
 the turning angle $\varphi(s)$ is monotone and $\vp_d$ has the same rotation index, i.e. $\pm 1$, as $\vr_0$, it follows that the curve $\vp_d$ is simple.
\qed
 
\begin{thm} Let $\E$ be a $C^3$ strictly convex simple closed escaping curve,  $R_{min}$ and $R_{max}$ are the minimum and maximum of the radius of curvature of $\E$ respectively .Given a shadowing distance $R>0$, the following hold:
\begin{enumerate}
\item If $R\in (0, R_{min})$,  then
 there exist at least one regular $C^2$  closed shadowing curve inside $\E$; moreover,
any shadowing curve has at most one turning point;
\item If $R>R_{max}$ and $\ro_{\E}(R)=1-q/p$, with $p,q$ relatively prime, then any closed shadowing curve with minimal period $2p\pi$ have exactly $2q$ turning points; 
\item If $R>R_{max}$ and $\ro_{\E}(R)$ is irrational, then any shadowing curve has infinitely many turning points.

\end{enumerate}

\end{thm}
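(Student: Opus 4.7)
The common machinery is the transformed reduced shadowing equation \eqref{rse40},
\[
\phi' \;=\; F_R(t,\phi)\;:=\;-\frac{B(t)}{R}\cos\phi\;-\;\psi'(t),\qquad \phi=\theta-\psi+\pi/2,
\]
together with the moving-frame decomposition $\vr(t)-\vr_0(t)=R\sin\phi(t)\,T(t)+R\cos\phi(t)\,N(t)$, where $T,N$ are the unit tangent and unit outer normal of $\E$ at $\vr_0(t)$. From this identity (or equivalently Lemma~\ref{sig}) one reads off $\vr'(t)=0$ iff $\phi(t)\in\pi\Z$; at such a cusp the shadower sits on $\vp_R$ when $\phi\equiv 0\pmod{2\pi}$ and on $\vp_{-R}$ when $\phi\equiv\pi\pmod{2\pi}$. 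The three cases then differ only in the sign structure of $F_R$.

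\emph{Case 1 ($R<R_{min}$).} The uniform inequality $B(t)/R>\psi'(t)>0$ gives $F_R(t,2k\pi)<0$ and $F_R(t,(2k+1)\pi)>0$, so trajectories cross even multiples of $\pi$ only downward and odd multiples only upward. The strips $((2k-1)\pi,2k\pi)$ are therefore forward-invariant, and each complementary strip is backward-invariant from a boundary crossing. Hence, if $\phi(t_0)=k\pi$, the trajectory is trapped in one attracting strip for $t>t_0$ and in the complementary attracting strip for $t<t_0$, giving at most one turning point. For the existence claim, Theorem~\ref{rho-01} provides a $2\pi$-periodic $\phi_R^+(t)\in(-\pi,0)$; at its extrema $\phi'=0$ forces $\cos\phi=-R\psi'/B\in(-1,0)$, so the extrema lie in $(-\pi,-\pi/2)$ and hence the whole orbit does. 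The outer-normal component $R\cos\phi_R^+(t)$ is strictly negative in $(-R,0)$, i.e.\ the shadower has inward offset in $(0,R)$ from $\vr_0(t)$ for every $t$. By Lemma~\ref{para}, for $R<R_{min}$ the inner tubular neighbourhood of $\E$ of width $R$ is an embedded annular region bounded by $\vp_{-R}$ and $\E$ and contained in the interior of $\E$; the shadower lies in this region at every $t$, so $\mcs_R^+$ is a regular $C^2$ closed shadowing curve inside $\E$.

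\emph{Cases 2 and 3 ($R>R_{max}$).} The reverse inequality $B(t)/R<\psi'(t)$ gives $F_R(t,\phi)\leq B/R-\psi'<0$, and compactness upgrades this to a uniform bound $\phi'\leq-\varepsilon<0$. Hence $\phi$ is strictly decreasing with $\phi(t)\to\mp\infty$ as $t\to\pm\infty$. In Case~3 this already yields infinitely many crossings of $\pi\Z$, i.e.\ infinitely many turning points. In Case~2, a shadowing curve of minimal period $2p\pi$ satisfies $\theta(t+2p\pi)-\theta(t)=2(p-q)\pi$ (from $\rho(R)=(p-q)/p$ and $\omega_0=1$) while $\psi(t+2p\pi)-\psi(t)=2p\pi$, whence $\phi(t+2p\pi)-\phi(t)=-2q\pi$; strict monotonicity then forces exactly $2q$ crossings of $\pi\Z$ per period. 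To verify that every singular point is an ordinary cusp I would compute $\vr''$ at cusps of both types: using $\vr_0''=B'T+B\psi'N_{\mathrm{in}}$ together with $\theta'=\mp B/R$ at a $\vp_{\pm R}$-cusp, the normal component of $\vr''$ comes out to $B^2(\kappa+1/R)$ on $\vp_R$ (always nonzero) and $B^2(\kappa-1/R)$ on $\vp_{-R}$ (nonzero whenever $R\ne 1/\kappa(t)$, hence throughout $R<R_{min}$ and $R>R_{max}$).

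The step I expect to demand the most care is the \emph{interior-of-$\E$} conclusion in Case~1: the dynamical confinement $\phi_R^+\in(-\pi,-\pi/2)$ gives only a pointwise inward offset at each $\vr_0(t)$, and translating this into the global statement $\mcs_R^+\subset\mathrm{int}(\E)$ relies essentially on the embedded inner tubular neighbourhood furnished by Lemma~\ref{para} for $R<R_{min}$ together with the trivial bound $\mathrm{dist}(\vr(t),\E)\leq R$. The other parts --- the invariant-strip trapping for Case~1, the monotonicity $\phi'\leq-\varepsilon$ with counting for Cases~2--3, and the ordinary-cusp verification --- are routine once the framework is set up.
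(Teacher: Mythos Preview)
Your framework and the three-case argument via the sign structure of $F_R$ match the paper's approach closely: same change of variables $\phi=\theta-\psi+\pi/2$, same identification of cusps with $\phi\in\pi\Z$, same invariant-strip argument for Case~1, and same strict monotonicity $\phi'<0$ for $R>R_{max}$ driving the counts in Cases~2--3. Your cusp verification is a valid alternative to the paper's computation $\|\vr''(t_k)\|=B(t_k)\,|\phi'(t_k)|$.

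One substantive omission: in Case~2 you count $2q$ crossings of $\pi\Z$ per minimal period, i.e.\ $2q$ cusp \emph{times}, but the paper interprets ``$2q$ turning points'' as $2q$ distinct points in $\R^2$ and supplies a further argument for this. Namely, the cusps lie on the parallel curves $\vp_{\pm R}$ (your Lemma~\ref{sig}), which for $R>R_{max}$ are simple and mutually disjoint (Lemma~\ref{para}); so any coincidence $\vr(t_k)=\vr(t_{k'})$ forces $t_{k'}-t_k\in 2\pi\Z$ and $k'-k$ even, and then a rotation-number computation shows this is only compatible with the minimal period when $t_{k'}-t_k=2p\pi$. You should add this step (or state explicitly that you are counting cusps along the parameterizing circle rather than as a subset of $\R^2$).

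On the ``inside $\E$'' claim in Case~1: you are right to flag this as the delicate point, and your extremum argument confining $\phi_R^+$ to $(-\pi,-\pi/2)$ is a nice refinement. But note that the step from ``negative outer-normal component at the foot $\vr_0(t)$'' to ``lies in the inner tubular neighbourhood'' is not immediate, since the nonzero tangential component $R\sin\phi$ means $\vr_0(t)$ is generally \emph{not} the nearest point of $\E$ to $\vr(t)$. The paper's own proof in fact does not address the ``inside $\E$'' assertion at all, so your attempt already goes further than the source; a clean way to close it would be a continuity argument in $R$ (the closed regular SC varies continuously, converges to $\E$ as $R\to 0^+$, and one checks it cannot meet $\E$).
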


\Proof Let $\E$ be parametrized by $\vr_0(t)$ as in \x{pa1} and \x{reg}.   
A point in the shadowing curve $\vr(t):=\vr_R(t;\th_0)$ is a singular point iff $\alpha(t)=0$ where $\alpha(t)$ is given in \x{sing2}. By \x{mpc} and \x{ec23}, 
\[\alpha(t):=\alpha_{R,\th_0}(t)=\frac{B(t)}{R}\cos(\th(t)-\psi(t)),\]
where $\th(t):=\th_R(t;\th_0)$ is the solution of \x{RSE} with initial condition $\th(0)=\th_0$. Let $\phi(t):=\th(t)-\psi(t)-\pi/2$, then $\phi(t):=\phi_R(t;\phi_0)$ is a solution of \x{rse40} with $\phi_0=\th_0-\psi(0)-\pi/2$.  It follows that $\alpha(t)=0$ iff $\phi(t)=k\pi$ for some $k\in \Z$.

1, For $0<R<R_{min}=\min_{t\in [0,2\pi]}\frac{B(t)}{|\psi'(t)|}$,
 the vector field in \x{rse40}
\[F_R(t,\phi):=-\frac{1}{R}B(t)\cos\phi-\psi'(t)\] satisfies
\[F_R(t,0)<0,\quad F_R(t,-\pi)>0,\quad t\in \R.\]
and  the $2\pi$-periodic solutions $\phi(t;\phi_R^{\pm})$ defined in \x{persol1} never obtain $0$ and $\pm \pi$, hence the corresponding closed shadowing curve $\vr_{R,\th_R^{\pm}}(t)$ has no singular point.
 
 Moreover, from the geometric explanation of the vector field $F_R(t,\phi)$, one sees that any integral curve $\{(t,\phi_R(t;\phi_0)))\}_{t\in\R}$ of \x{rse40} can intersect the lines $\{(t,\phi=k\pi)|t\in\R, k\in\Z\}$ at most once. Hence for any $\th_0$, there exists at most one $t_0$ such that touch that $\phi(t_0)=k\pi$ for some $k\in\Z$, whence $\alpha_{R,\th_0}(t_0)=0$.  

2,  If $\ro_{\E}(R)=1-q/p$, then $\ro_{\phi}(R)=-q/p$ for \x{rse40}, since the rotation index of $\E$ is assumed to be $1$. Any closed shadowing curve is given by some $\vr_R(t;\th_*)$ such that the corresponding solution $\phi_R(t;\phi_*)$ of \x{rse40} satisfies
\be\label{pq}\phi_R(t+2p\pi ;\phi_*)=\phi_R(t;\phi_*)-2q\pi,\quad \phi_*=\th_*-\psi(0)+\pi/2\ee

Let $R>R_{max}$, then $F_R(t,\phi)<0$ for all $t,\phi$. Hence $\phi_R(t,\phi_*)$ is strictly decreasing. Suppose $\phi_R(t,\phi_*)$ obtains a multiple of $\pi$ at $t=t_{k_0}$,  i.e. $\phi_R(t_{k_0},\phi_*) =k_0\pi$ for some integer $k_0$. From \x{pq}, there exists exactly $2q$ points in the interval $[t_{k_0},t_{k_0}+2p\pi]$, denoted by $0\leq t_{k_0}<t_{k_0+1}<\cdots<t_{k_0+2q}=t_{k_0}+2p\pi$, such that $\phi_R(t_{k_0+j};\phi_*)=(k_0-j)\pi$, $j=0,\cdots,2q$.
These $\vr(t_k):=\vr_R(t_k;\th_*)$ , $k=k_0,\cdots,k_0+2q$ are singular points, we claim that they are different. 

Indeed, by Lemma \ref{sig}, $\vr_R(t_k)$ lie on the parallel curves $\vp_{\pm R}$ of $\vr_0$, with $+R$ for even $k$  and $-R$ for odd $k$.  By Lemma \ref{para}, the curves $\vp_{\pm R}$ has no self-intersection, and $\vp_R$ does not intersect $\vp_{-R}$.  Therefore, if there are two points $\vr_R(t_k)=\vr_R(t_{k'})$, then $t_{k'}=t_{k}+2m\pi$ and $k'-k=2m'\pi$ for some positive integers $m\leq p, m'\leq q$.  Let $\tilde{\phi}(t):=\phi_R(t+t_k,\phi_*)$, then $\tilde{\phi}(t)$ is a solution of the $2\pi$-periodic equation \x{rse40} with vector field $\tilde{F}(t,\phi):=F_R(t+t_k,\phi)$, and the rotation number of $\tilde{\phi}(t)$ is $-q/p$. Denote $\tilde{\phi}_0=\tilde{\phi}(0)$, we have
\[\tilde{\phi}(t+2m\pi;\tilde{\phi}_0)=\tilde{\phi}(t;\tilde{\phi}(2m\pi;\tilde{\phi}_0))=\tilde{\phi}(t;\phi_R(t_{k'};\phi_*))=\tilde{\phi}(t;\tilde{\phi}_0-2m'\pi)=\tilde{\phi}(t;\tilde{\phi}_0)-2m'\pi\]
Hence the rotation number of $\tilde{\phi}$ is $-m'/m$, therefore $m=p$, $m'=q$. We conclude that $\vr(t_k),k=k_0,\cdots, k_0+2p$ are exactly $2q$ different points.

3. If $\ro_{\E}(R)$ is irrational, then $\ro_{\phi}(R)$ is irrational, in particular, nonzero. It follows that the range of the solution $\phi_R(t;\phi_0))$ of \x{rse40} is $\R$, thus there exists $t_k$ such that $\phi(t_k)=k\pi$  for any $k\in \Z$. These $\vr(t_k)=\vr_R(t_k;\th_0)$ are different singular points for different $k$, otherwise by the same arguments in as above, $\vr(t)$ is periodic, which contradict with the irrationality of $\ro_{\phi}(R)$.

Finally, if $R>R_{max}$ or $R<R_{min}$, then at a singular point $\vr(t_k)$ of a shadowing curve, such that $\phi(t_k)=k\pi$, we have
\[\|\vr''(t_k)\|=R|\alpha'(t_k)|=B(t_k)|\phi'(t_k)|=B(t_k)|F_R(t_k,\phi(t_k))|\neq 0,\]
hence $\vr(t_k)$ is a turning point.
\qed

\section{Shadowing Curves to the Unit Circle and Ellipses}
\setcounter{equation}{0} \lb{fourth}
This section is devoted to more analytic and numerical analysis on the rotation number and theorems concerning dynamical behaviors and regularities of shadowing curves in section \ref{scs}, \ref{scs2}, when the escaping curve is a circle or an ellipse.
\subsection{SCs to the unit circle} \lb{sc-c1}

In this subsection, we give a detailed analysis for shadowing curves to the unit circle
    \[
    \mcc_1: \q \vr_0(t)=(\xi(t),\eta(t)) =(\cos t, \sin t).
    \]
That is, the escaper starts at the point $(1,0)$ and then runs along the unit circle $\mcc_1$, centered at the origin, anti-clockwise at a constant speed $1$. With the circular EC $\mcc_1$, the SE and RSE are respectively
    \bea\lb{rse1}
    \vr' \EQ \f{(-\sin t,\cos t)\d \vr}{\|\vr-(\cos t,\sin t)\|^2}(\vr-(\cos t,\sin t)),\\
    \lb{rse2}
    \th' \EQ -\f{1}{R}(\sin t\sin \th +\cos t\cos \th)= - \f{\cos(\th-t)}{R}.
    \eea
Here SE \x{rse1} is a planar system of ODEs, while RSE \x{rse2} is an ODE on the line or on the circle.
For RSE \x{rse2}, by letting
    \be \lb{ths}
    \phi:= \th-t,
    \ee
problem \x{rse2}-\x{th0} is transformed into the following initial value problem for $\phi=\phi(t)$
    \bea \lb{phi1}
    \phi' =\f{\rd\phi}{\dt} \EQ  - \f{\cos \phi+R}{R},\\
    \lb{phi0}
    \phi(0) \EQ \th_0.
    \eea
All solutions $\phi(t)=\phi_\rth(t)$ of \x{phi1}-\x{phi0} can be calculated explicitly.

In the sequel we will adopt the complex form \x{complex} for EC and SCs.
Now the EC is $r_0(t) = e^{i t}$ and SCs are given by
    \be \lb{r1t}
    r(t) = e^{i t}+ Re^{i(t+\phi(t))} \equiv e^{i t} (1+R e^{i \phi(t)}).
    \ee
For later uses, we can use Eq. \x{phi1} to deduce from \x{r1t} that
    \be \lb{rt'}
    r'(t) \equiv e^{i(t+\phi(t))} \sin \phi(t),\andq
    r''(t) \equiv -\f{1}{R} e^{i(t+\phi(t))}(e^{i\phi(t)}+R)\cos \phi(t).
    \ee
The circle of radius $b>0$ centered at the origin is denoted by
    \[
    \bb{split}
    \mcc_b & := \{\vr\in \R^2: \|\vr\|= b\}=\{ z\in \C: |z|=b\}.
    \end{split}
    \]

 First note that for $\E=\mcc_1$, its radius of curvature is constant $1$, hence $R_{min}=R_{max}=1$; the rotation index is $\omega_0=1$.
\subsubsection{SCs with $R\in(0,1)$}\lb{sc-c2}

We first consider shadowing distances $R\in(0,1)$. In this case, RSE \x{phi1} has two geometrically different equilibria. In order to be consistent with the asymptotic stability, they are labelled as $\phi=\th_R^+$ and $\phi=\th_R^-$, where
    \be \lb{ph0}
    \th_R^+ = -\th_R^-:=-\arccos(-R)\equiv -\z(\pi/2+\arcsin R\y).
    \ee
Hence $\th_R^+\in(-\pi,-\pi/2)$ and $\th_R^-\in(\pi/2,\pi)$. These equilibria $\th_R^\pm$ yield the periodic SCs
    \bea \lb{Rr}
    r_{R,\th_R^\pm}(t) \EQ e^{i t} (1+R e^{i \th_R^\pm})
    \equiv \sqrt{1-R^2}\, e^{i(t\mp \arcsin R)}.
    \eea
The initial points of periodic SCs \x{Rr} are
    \[
    r_{R, \th_R^\pm}(0) = 1-R^2 \mp i R \sqrt{1-R^2}\in \z\{r\in \C: \z|r-{1}/{2}\y|={1}/{2}\y\}=: \hat\mcc.
    \]
In fact, $r_{R, \th_R^+}(0)$ and $r_{R, \th_R^-}(0)$ are located on the lower and the upper semi-circles of $\hat\mcc$ respectively. SCs $r_{R,\th_R^\pm}(t)$ have the minimal period $2\pi$. Their trajectories are the same circle  $\mcc_{R_1}$, $R_1:=\sqrt{1-R^2}$, but with different phases.

From the above reasoning, SCs \x{Rr} are called the circular SCs to $\mcc_1$. In fact, these circular SCs can be constructed using  elementary geometry. See Figure \ref{sccircles}.

\begin{figure}[ht]
\centering
\includegraphics[width=6.5cm, height=6cm]{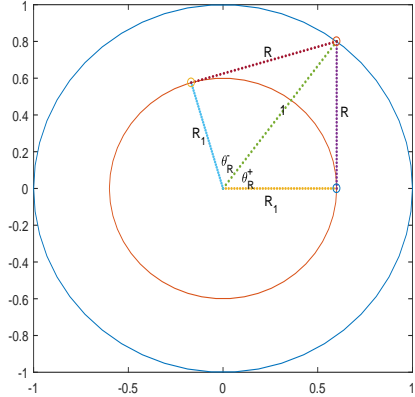}
\caption{Circular SCs $\mcs_{R,\th_R^\pm}$ to the unit circle $\mcc_1$, where $R=4/5$ and $R_1=\sqrt{1-R^2}=3/5$. }
\label{sccircles} 
\end{figure}

Due to the labelling as in \x{ph0}, the equilibria $\th_R^+$ and $\th_R^-$ of RSE \x{phi1} are respectively positively  and negatively stable. Going to SCs $r_\rth(t)$, we have from \x{phi1}---\x{r1t} the following results. 

    \bb{thm} \lb{Rxy1}
Let $R\in(0,1)$ and consider SCs $r_\rth(t)$.

\bu If $\th_0=\th_R^\pm$, then $r_{R,\th_R^\pm}(t)$ are the circular SC $\mcc_{\sqrt{1-R^2}}$ with different phases.

\bu If 
$\th_0\ne \th_R^\pm$, then $\mcs_\rth$ has the unique turning point at
    \be \lb{taur}
    t=\tau(\th_0):= \z\{ \ba{ll}
    \f{2R}{\sqrt{1-R^2}}\, \arctanh \z( \sqrt{\f{1-R}{1+R}} \tan \f{\th_0}{2}\y) & \mbox{ for } \th_0\in(\th_R^+,\th_R^-),\\
    \f{2R}{\sqrt{1-R^2}}\, \arctanh \z( \sqrt{\f{1+R}{1-R}}\cot\f{\th_0}{2}\y) & \mbox{ for } \th_0\in(\th_R^-,\th_R^++2\pi).
    \ea\y.
    \ee
Moreover, as $t\to \pm \oo$, the SC $r_\rth(t)$ is asymptotic to the circular SCs $r_{R,\th_R^\pm}(t)$ respectively.
    \end{thm}

\Proof For the case $\th_0=\th_R^\pm$, the results have been stated in \x{Rr}.

To obtain \x{taur}, we need only to choose $\th_0$ from $(\th_R^+, \th_R^-) \cup  (\th_R^-,\th_R^+ +2\pi)$. By \x{rt'}, the irregular time $\tau$ of $r_\rth(t)$ satisfies $\sin\phi_\rth(\tau)=0$. Precisely, we have the following two cases.

\bu For case $\th_0\in(\th_R^+,\th_R^-)$, one has $\phi_\rth(\tau)=0$, and by \x{phi1}-\x{phi0},
    \[
    \tau = -\int_{\th_0}^0 \f{R \rd\phi}{R+\cos \phi} = \f{2R}{\sqrt{1-R^2}} \arctanh \z( \sqrt{\f{1-R}{1+R}} \tan \f{\th_0}{2}\y).
    \]

\bu For case $\th_0\in(\th_R^-,\th_R^++2\pi)$, one has $\phi_\rth(\tau)=\pi$, and by \x{phi1}-\x{phi0},
    \beaa
    \tau \EQ -\int_{\th_0}^\pi \f{R \rd\phi}{R+\cos \phi}
    =\f{2R}{\sqrt{1-R^2}} \arctanh \z( \sqrt{\f{1+R}{1-R}}\cot\f{\th_0}{2}\y).
    \eeaa
These are what stated in \x{taur}.

At $t=\tau$, one has from \x{rt'} that $r'(\tau)=0$  and
    \[
    \z|r''(\tau)\y| =\z| -\f{1}{R} e^{i(\tau+\phi_\rth(\tau))}(e^{i\phi_\rth(\tau)}+R)\cos \phi_\rth(\tau)\y|\ge \f{1-R} R>0.
    \]
Hence $r_\rth(t)$ must have a turning point at $\tau(\th_0)$. See Figure \ref{sc-circle5}.

Furthermore, let $\th_0\ne \th_R^\pm$. Then, as $t\to \pm \oo$,
    \[
    \z| r_\rth(t)-r_{R,\th_R^\pm}(t)\y| \equiv R \z|e^{i \phi_\rth(t)}- e^{i \phi_{R,\th_R^\pm}(t)}\y|\to 0.
    \]
That is, $r_\rth(t)$ is attracted to $r_{R,\th_R^\pm}(t)$ as $t\to \pm \oo$ respectively. \qed

For these SCs, see Figure \ref{sc-circle5}.

\begin{figure}[ht]
\centering
\includegraphics[width=7.5cm, height=4.8cm]{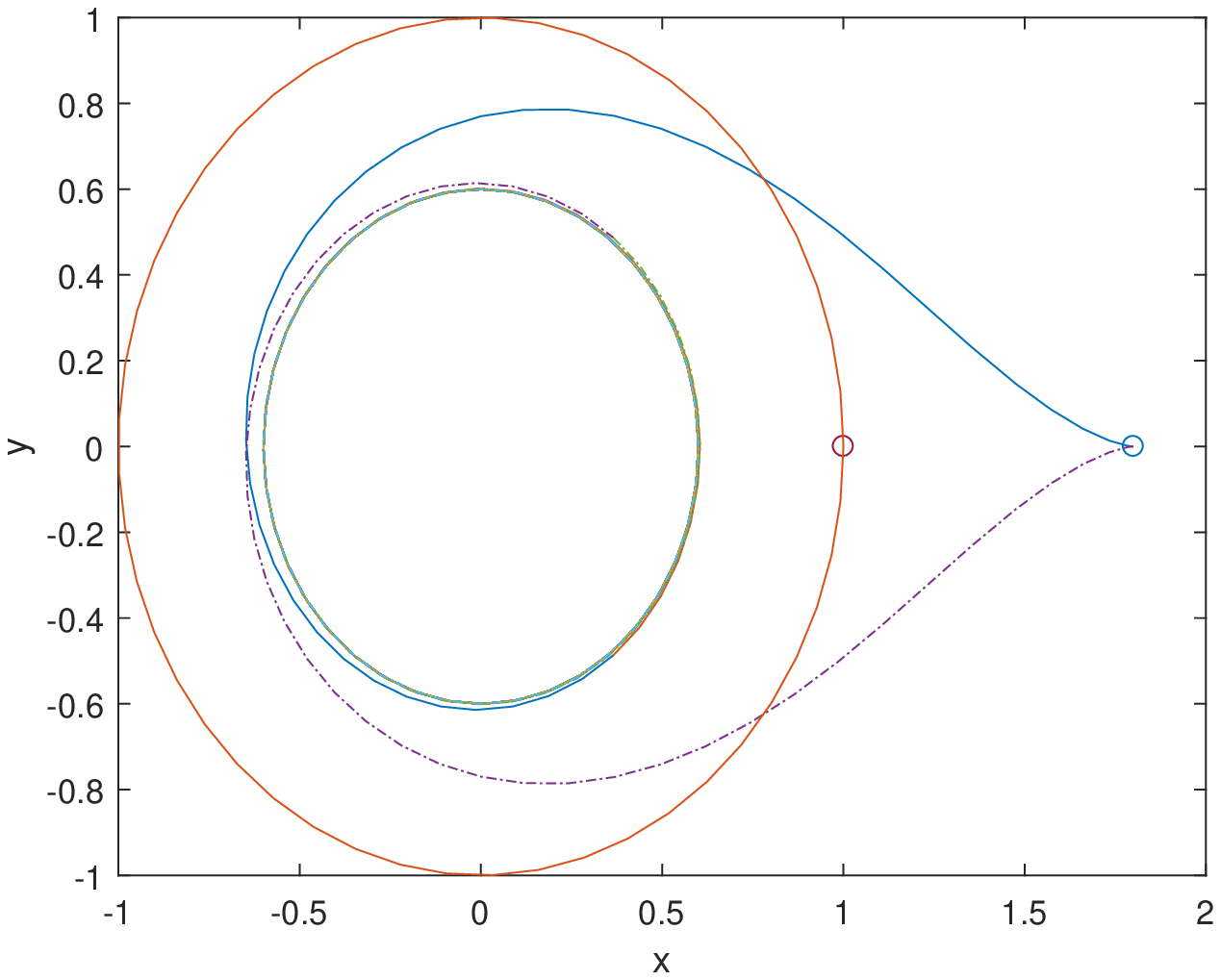}\hspace{1cm}
\includegraphics[width=6cm, height=5.8cm]{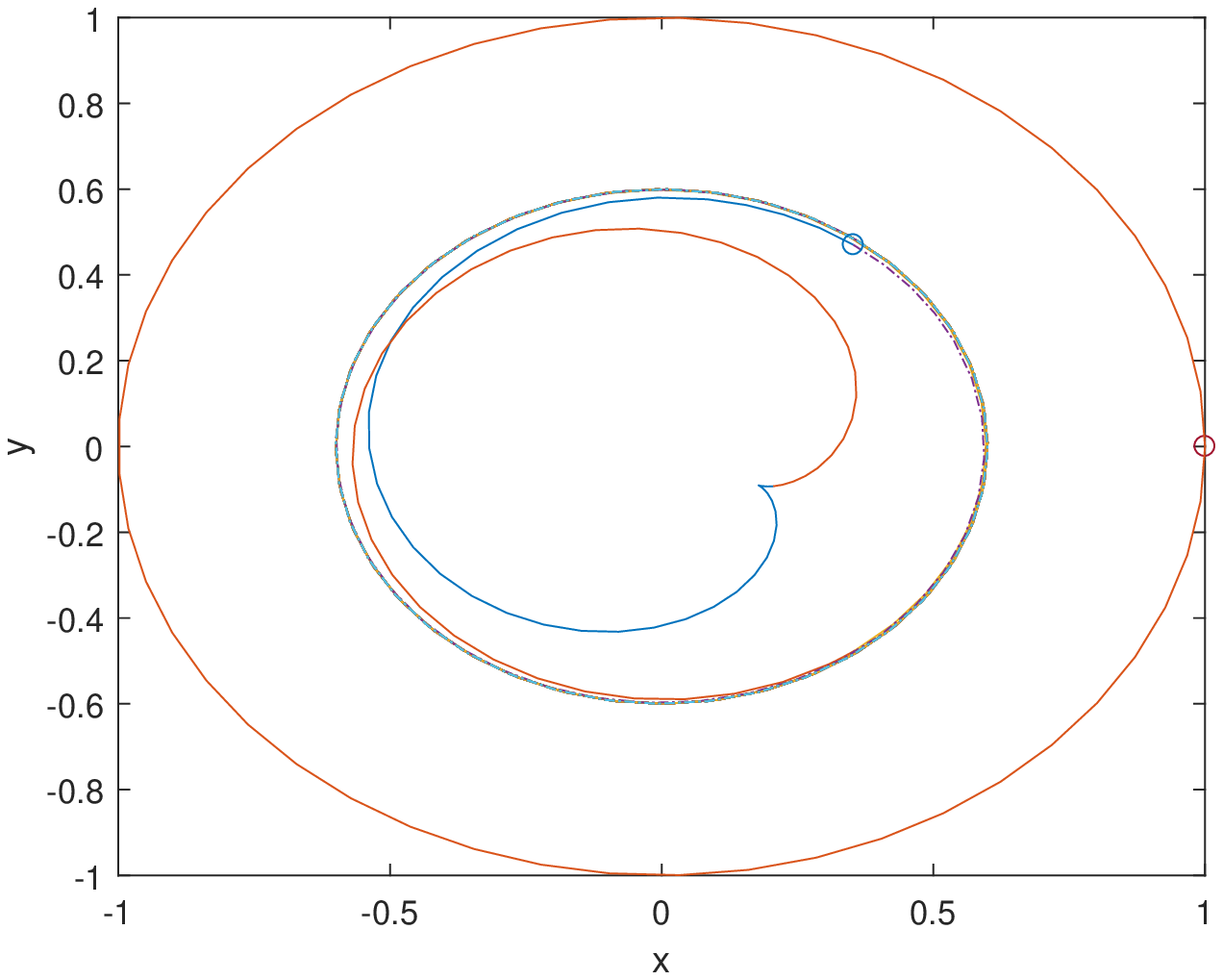}
\caption{SCs $\mcs_{4/5,0}$ (left) and $\mcs_{4/5,4\pi/5}$ (right). Here $\circ$ denotes the initial points,
the solid curve --- is for positive time, and the dashed curve -$\d$-$\d$  is for negative time.}
\label{sc-circle5} 
\end{figure}

\subsubsection{SCs with $R=1$}\lb{sc-c3}

Next we consider the case $R=1$. That is, the shadowing distance $R$ is precisely equal to the radius of $\mcc_1$. In this case, RSE \x{phi1} is
    \be \lb{phir1}
    \f{\rd\phi}{\dt} = - 2 \cos^2 \f{\phi}{2}.
    \ee
Eq. \x{phir1} has only one geometrically different equilibrium, say $\phi=\pi$. Going to \x{r1t}, this is $r_{1,\pi}(t) \equiv 0$, an equilibrium of SE \x{rse1} or a constant SC. It means that the shadower is just always standing at the center of the escaping circle. The other SCs are as follows.

    \bb{thm} \lb{Rdy1}
Let $R=1$ and $\th_0\in (-\pi,\pi)$. Then SC $r_{1,\th_0}(t)$ of \x{rrtt} is given by
    \be \lb{rrtt}
    \mcs_{1,\th_0}: \q r_{1,\th_0}(t)
    \equiv \f{e^{i t} (1+e^{i \th_0} - i t(\cos \th_0+1) )}{1-t \sin \th_0 +t^2 \cos^2(\th_0/2)}.
    \ee
Moreover, $r_{1,\th_0}(t)$ has a turning point at $t=\tau(\th_0):= \tan(\th_0/2)$, and, as $t\to \pm \oo$, $r_{1,\th_0}(t)$ is asymptotic to the equilibrium  $r_{1,\pi}(t)\equiv 0$.
    \end{thm}

\Proof Let $\th_0\in (-\pi,\pi)$. By \x{phir1}-\x{phi0}, $\phi(t)=\phi_{1,\th_0}(t) \in (-\pi,\pi)$ is determined by
    \be \lb{jf31}
    \tan \f{\phi}{2} =\tan\f{\th_0}{2}-t.
    \ee
Going to \x{r1t}, one can use \x{jf31} to simplify $r_{1,\th_0}(t)= e^{i t} \z( 1+e^{i \phi(t)}\y)$ to \x{rrtt}.

It follows from \x{rt'} that $r(t)$ is singular at time $t$ if and only if
    \(
    \sin \phi(t) =0.
    \)
Since $\phi(t)$ is within $(-\pi,\pi)$, one has $\phi(t)= 0$, and then by \x{jf31}, the unique time $t$ is $\tau(\th_0)= \tan(\th_0/2)$. Moreover, by using \x{rt'}, one has also
    \(
    r''(\tau(\th_0)) = -2 e^{i \tau(\th_0)}=- 2e^{i \tan(\th_0/2)}\ne 0.
    \)
Thus $r_{1,\th_0}(t)$ is actually a turning point at $t=\tau(\th_0)$. See Figure \ref{sc-circle0}.

As $t\to\pm\oo$, it follows from formula \x{rrtt} or directly from Eq. \x{phir1} that $\phi_{1,\th_0}(t) \to \mp\pi$. Therefore
    \(
    r_{1,\th_0}(t) = e^{i t} (1+ e^{i \phi_{1,\th_0}(t)})\to e^{i t} (1+ e^{\mp i \pi})\equiv 0.
    \)
\qed

\begin{figure}[ht]
\centering
\includegraphics[width=6.5cm, height=4.2cm]{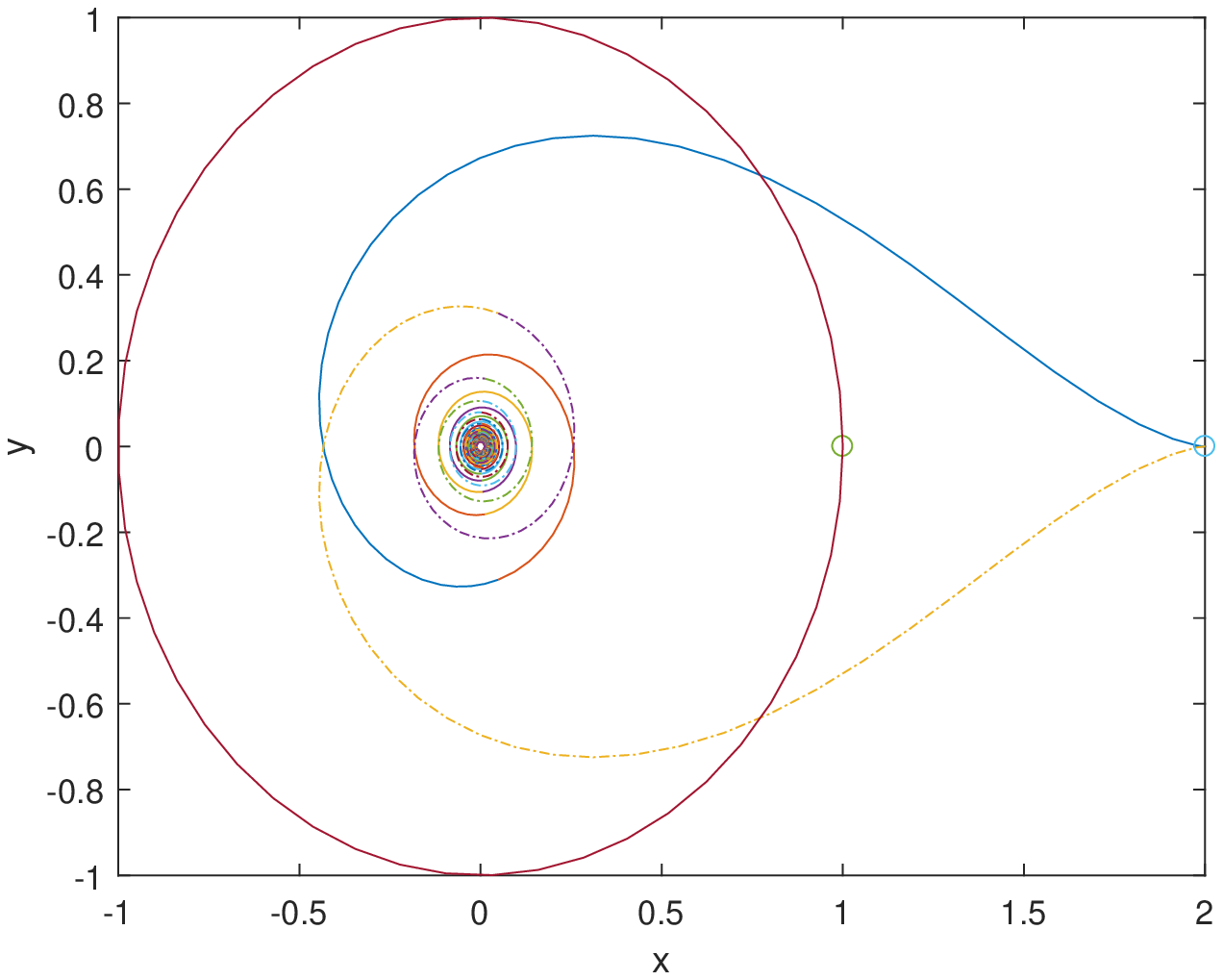}\hspace{1cm}
\includegraphics[width=6.5cm, height=7cm]{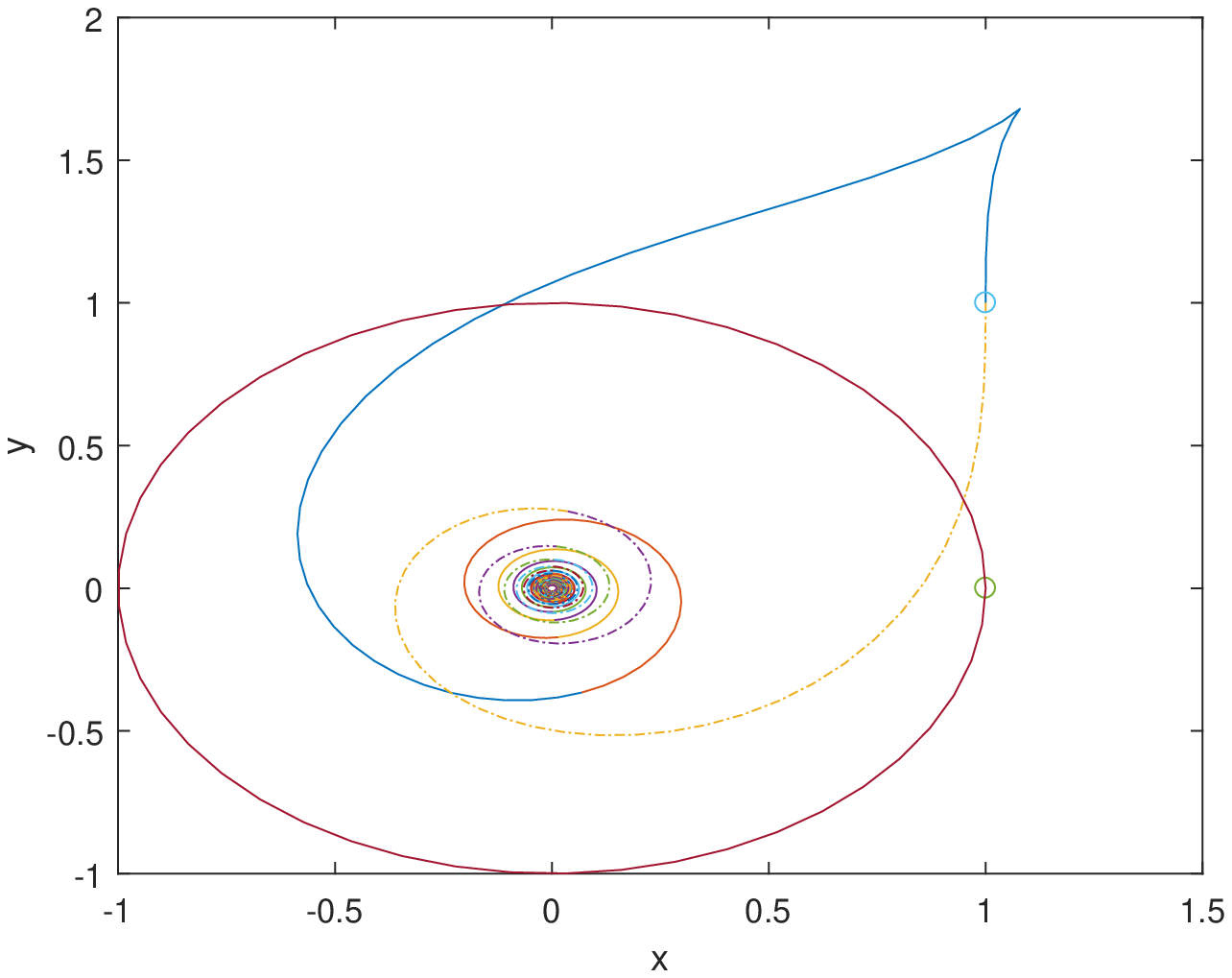}
\caption{SCs $\mcs_{1,0}$ (left) and $\mcs_{1,\pi/2}$ (right). Here $\circ$ denotes the initial points,
the solid curve --- is for positive time, and the dashed curve -$\d$-$\d$  is for negative time.}
\label{sc-circle0} 
\end{figure}

The explanation to the SCs of Theorem \ref{Rdy1} is as follows. Geometrically, each $\mcs_{1,\th_0}$ has exactly one turning point at time $\tau(\th_0)$ and looks  like two spirals defined mainly using positive and negative times $t$ respectively. Moreover, the shadower will finally approach to the center of the escaping circle. See Figure \ref{sc-circle0}.

\subsubsection{SCs with $R\in(1,\oo)$} \lb{sc-c4}

At last we consider shadowing distances $R\in(1,\oo)$. In this case, RSE \x{phi1} has no equilibrium. To solve Eq. \x{phi1}, let us define
    \be \lb{sec321}
F(\phi)=F_R(\phi):=-\int_0^\phi \f{R\rd\phi}{\cos \phi+R}, \qq \phi \in \R.
    \ee
It is a strictly decreasing, smooth, odd function on $\R$. Using the function $F(\phi)$ of \x{sec321}, the solution $\phi(t)= \phi_\rth(t)$ of problem \x{phi1}-\x{phi0} satisfies
    $
    F(\phi)-F(\th_0)\equiv t,
    $
i.e.
    \be \lb{sec322}
    \phi_\rth(t) \equiv F^{-1}_R\z( F_R(\th_0)+t\y),\qq t\in \R.
    \ee
In order to be consistent with the rotation number of Eq. \x{phi1}, let us introduce
    \be \lb{vr}
    \rho = \rho_R:= -\f{\sqrt{R^2-1}}{R}\in (-1,0)\qq \mbox{for } R\in(1,\oo).
    \ee
Then $F_R(\phi)$ is explicitly given by
    \be \lb{sec34}\bb{split}
    F(\phi) & \equiv \f{2}{\rho} \arctan\z( \sqrt{\f{R-1}{R+1}} \tan \f{\phi}{2}\y)\qqf \phi\in(-\pi,\pi),\\
    F(\pm \pi) & = \pm \f{\pi}{\rho} ,\\
    F(\phi+2\pi) & \equiv F(\phi) +  \f{2\pi}{\rho} \qqf \phi\in\R.
    \end{split}
    \ee
From \x{sec322}---\x{sec34} we deduce that $\phi(t)$ satisfies
    \be \lb{ph-p}
    \phi(t+k \pi/\rho) \equiv \phi(t)+k\pi, \andq \phi(t+ 2k\pi/\rho) \equiv \phi(t)+2k\pi
    \ee
for all $t\in \R$ and all $k\in\Z$.

Now we calculate the rotation number of the RSE \x{rse2}, including the case $R\in(0,1]$.

\bb{lem} \lb{rhoc}
The rotation number of the RSE \x{rse2} is 
    \be \lb{rhor3}
    \ro(R)=
    \z\{ \ba{ll}
    1 & \mbox{ for } R\in(0,1],\\
    1-\f{\sqrt{R^2-1}}{R}  
    & \mbox{ for } R\in[1,\oo).
    \ea\y.
    \ee
\end{lem}

\Proof We use $\ro_\phi(R)$ to denote the rotation number of Eq. \x{phi1}. Due to the change \x{ths} of variables, the rotation number $\ro(R)$ of the RSE \x{rse2} is
    \be \lb{rh1}
    \ro(R)\equiv 1+ \ro_\phi(R).
    \ee

For $R\in(0,1]$, Eq. \x{phi1} has constant solutions $\phi=\phi_{R,\th_R^\pm}(t) \equiv \th_R^\pm$. Hence its rotation number is
    \be \lb{rh2}
    \ro_{\phi}(R)= \lim_{t\to \oo} \f{\phi_{R,\th_R^\pm}(t)  -\th_R^\pm}{t}=0.
    \ee

Next let $R\in(1,\oo)$. By using  the number $\rho=\rho_R\in (-1,0)$ in \x{vr}, we can apply equalities in \x{ph-p} with any initial angle $\th_0$. By choosing times
    \[
    \tau_k:= k \pi/(-\rho)\to +\oo \qq\mbox{as } k\to+\oo,
    \]
one has from \x{ph-p} that
    \[
    \phi(\tau_k) \equiv \phi(0)-k\pi =\th_0 -k\pi.
    \]
Hence the rotation number of Eq. \x{phi1} is
    \be \lb{rh3}
    \ro_{\phi}(R)= \lim_{k\to +\oo} \f{\phi(\tau_k)-\th_0}{\tau_k}
    = \lim_{k\to +\oo} \f{-k\pi}{k \pi/(-\rho)}=\rho= \rho_R.
    \ee

The final result \x{rhor3} for RSE \x{rse2} then follows immediately from \x{rh1}---\x{rh3}. \qed

    \bb{rmk} \lb{c11}
For the escaping unit circle $\mcc_1$, one has from \x{rhor3} that the critical and the turning shadowing distances are
    \[
    \ul{R}(\mcc_1)=\ol{R}(\mcc_1)=1.
    \]
More generally, for any escaping circle, by the dilation relation of SCs, both of the critical and the turning shadowing distances are equal to the radius of the circle.
    \end{rmk}

Now let us describe the main features for SCs $\mcs_\rth$, where $R>1$ and $\th_0\in\R$.

The shadowing domain is now an annulus
    \be \lb{incl}
    \D_R:= \z\{ r\in \C: R-1\le |r|\le R+1\y\}.
    \ee
   
Note that
    \[
    |r_\rth(t)|^2 =|e^{i t} (1+ R e^{i\phi(t)})|^2 = 1+ R^2 + 2 R \cos \phi(t).
    \]
Since $\phi(t)$ has the range $\R$, one has
    \[
    \min_{t\in \R} |r_\rth(t)|=R-1>0, \andq \max_{t\in \R} |r_\rth(t)|=R+1.
    \]
In particular, any  SC $\mcs_\rth$ is contained in $\D_R$:
    \(
    \mcs_\rth \subset \D_R.
    \)

{\bf Turning points.} By the first equality of \x{rt'}, singular times $t$ of $\mcs_\rth$ such that $r'(t)=0$ are determined by
    \be \lb{Ttk}
    \phi(t) = -k\pi, \qq k\in \Z.
    \ee
By using \x{sec322}, \x{sec34} and \x{Ttk}, $\mcs_\rth$ always admits a bi-sequence $\{\tau_k=\tau_k(\th_0)\}_{k\in \Z}$ of singular times given by
    \be \lb{Tk2}
    \tau_k = -F(\th_0) +F(-k\pi)\equiv \tau_0+ k \pi/|\rho|, \qq k\in \Z.
    \ee
Here these times $\tau_k$ are indexed so that $\tau_k$ is strictly increasing with respect to $k\in \Z$. They satisfy $\tau_k \to \pm \oo$ as $k\to \pm \oo$. As before, singular times give turning points of $\mcs_\rth$
    \be\lb{rtk}
    r_k=r_k(\th_0):=r(\tau_k)
    \equiv (-1)^{k}(R+(-1)^k) e^{i (\tau_0+ k \pi/|\rho|)}.
    \ee
They are located at
    \be \lb{rtk12}
    r_{2k} \in \mcc_{R+1}, \andq r_{2k+1} \in \mcc_{R-1}\qqf k\in \Z,
    \ee
where $\mcc_{R+1}$ and $\mcc_{R-1}$ are the parallel curves of $\mcc_1$, they are
the outer and the inner circles of the annulus $\D_R$ defined by \x{incl}.

Using these notations, the SC $r_\rth(t)$ is evolving as follows. With an initial angle $\th_0$ at hand, SC $r_\rth(t)$ will arrive at a point $r_0=(R+1) e^{i \tau_0}\in \mcc_{R+1}$ on the outer circle. Then

\bu after a time of $\pi/|\rho|$, $r_\rth(t)$ will arrive to the inner circle $\mcc_{R-1}$ at the point $r_1=(R-1) e^{i \tau_1}$, and

\bu after another time of $\pi/|\rho|$, it will return to the outer circle $\mcc_{R+1}$ at the point $r_2=(R+1) e^{i \tau_2}$, and so on.

{\bf Periodicity and quasi-periodicity.} The types of solutions $\phi(t)=\phi_\rth(t)$ and SCs $r(t)=r_\rth(t)$ depend on whether $\rho=\rho_R$ is rational. By \x{r1t}, one has
    \be \lb{sc-Rd1}
    r(t) = e^{i t}\bigl(1+ R e^{i\phi(t)}\bigr):= e^{i t} \tl r(t),
    \ee
where $\tl r(t)$ satisfies
    \be \lb{tlr}
    \tl r\z(t+2\pi/|\rho|\y)=1+ R e^{i \phi(t+2\pi/|\rho|)}=1+ R e^{i(\phi(t)-2\pi)}\equiv \tl r(t).
    \ee
See \x{ph-p}. In fact, $\tl r(t)$ has the minimal period $2\pi/|\rho|$.

Using formulas \x{vr} and \x{rhor3} for rotation numbers, we can distinguish the following two cases for $R>1$.

{\bf Case 1: Subharmonic SCs.} There are co-prime integers $p>q\ge 1$ such that
    \be \lb{Rpq}
    R=\rpq := \f{p}{\sqrt{p^2-q^2}}>1,
    \ee
i.e.
    \be \lb{Rpq1}
    \rho_\rpq 
    =-\f{q}{p}\in(-1,0), \andq \ro(\rpq) = \f{p-q}{p}\in(0,1).
    \ee
In this case, it follows from \x{sc-Rd1} and \x{tlr} that $e^{i t}$ and $\tl r(t)$ have the minimal periods $2\pi$ and $2p\pi/q$ respectively. Hence $r_{\rpq,\th_0}(t)$ is periodic of the minimal period $T_{\min}=2p\pi$. Thus all $r_{\rpq, \th_0}(t)$, $\th_0\in \R$ are subharmonic SCs of the minimal period $2p\pi$.

Further properties on the shapes of these SCs are as follows.

    \bb{thm} \lb{Rdy1a}
Let $R=\rpq>1$ be as in \x{Rpq} and \x{Rpq1}. For any $\th_0$, $r_{\rpq,\th_0}(t)$ is a subharmonic SC of the minimal period $T_{\min}=2p \pi$. Moreover, it admits precisely $2q$ turning points $\z\{r_{k}(\th_0)\y\}_{k=0,1,\dd,2q-1}$, half of which are on the outer circle $\mcc_{\rpq+1}$ and another half on the inner circle $\mcc_{\rpq-1}$.
    \end{thm}

\Proof We have known that $r_{\rpq,\th_0}(t)$ is a subharmonic SC of the minimal period $2p \pi$. Since $p$ and $q$ are co-prime, one sees from \x{Tk2}---\x{rtk12} that
    \[\bb{split}
    r_{2k} & = +(\rpq+1) e^{i (\tau_0+ 2 k p\pi/q)}\in \mcc_{\rpq+1},\\
    r_{2k+1} & = -(\rpq-1) e^{i (\tau_0+ (2 k+1) p \pi/q)}\in \mcc_{\rpq-1},
    \end{split} \qq k\in \Z.
    \]
These have given precisely $q$ turning points on $\mcc_{\rpq+1}$ and another $q$ turning points on $\mcc_{\rpq-1}$. \qed

Due to the properties described in the theorem, these SCs $r_{\rpq,\th_0}(t)$ are refereed as the {\it $q/p$-subharmonic SCs}.
They look like color clouds or flowers.
See Figures \ref{sc-sub1} and \ref{sc-sub2}.

\begin{figure}[ht]
\centering
\includegraphics[width=6.5cm, height=3.7cm]{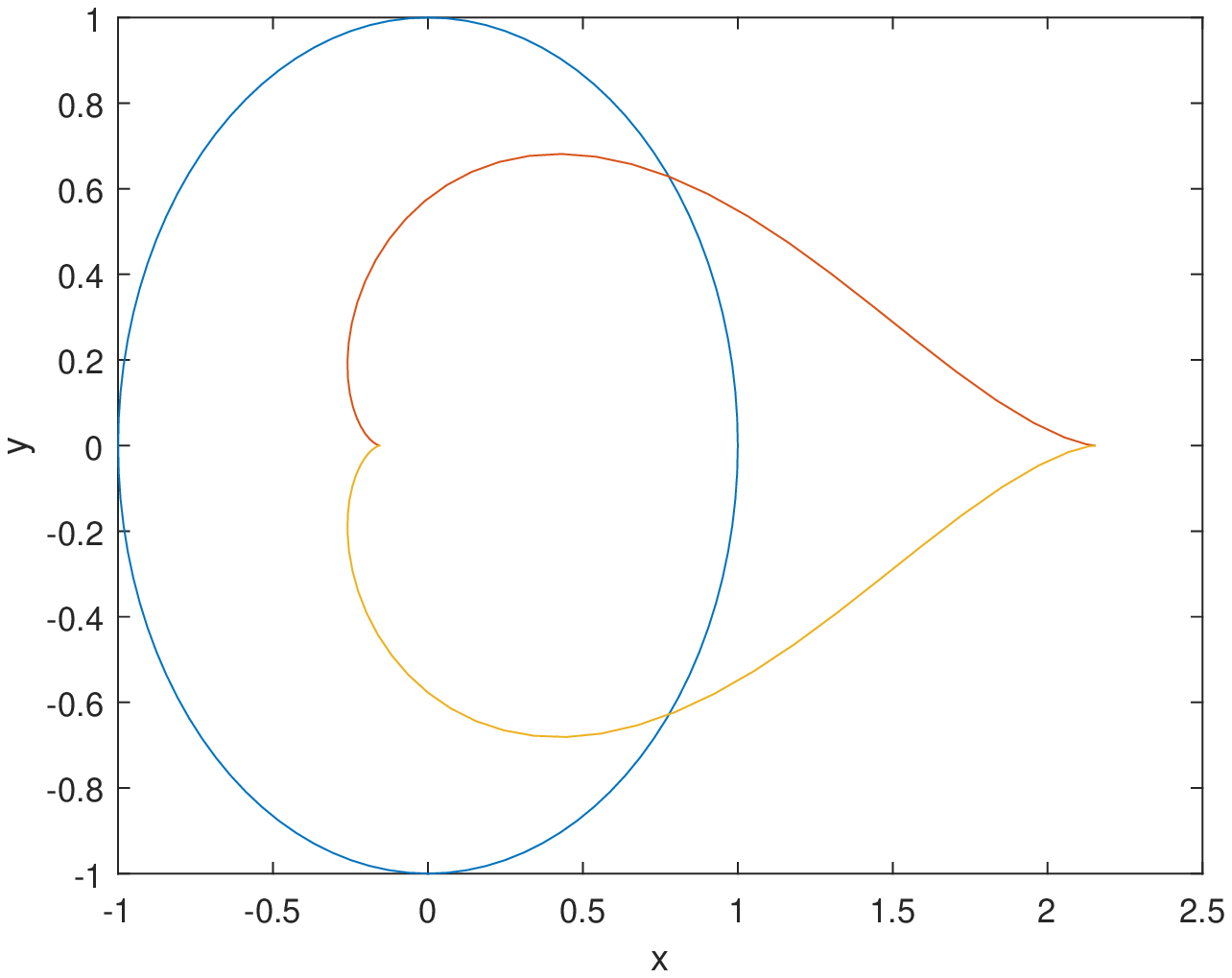}\hspace{1cm}
\includegraphics[width=6.5cm, height=3.7cm]{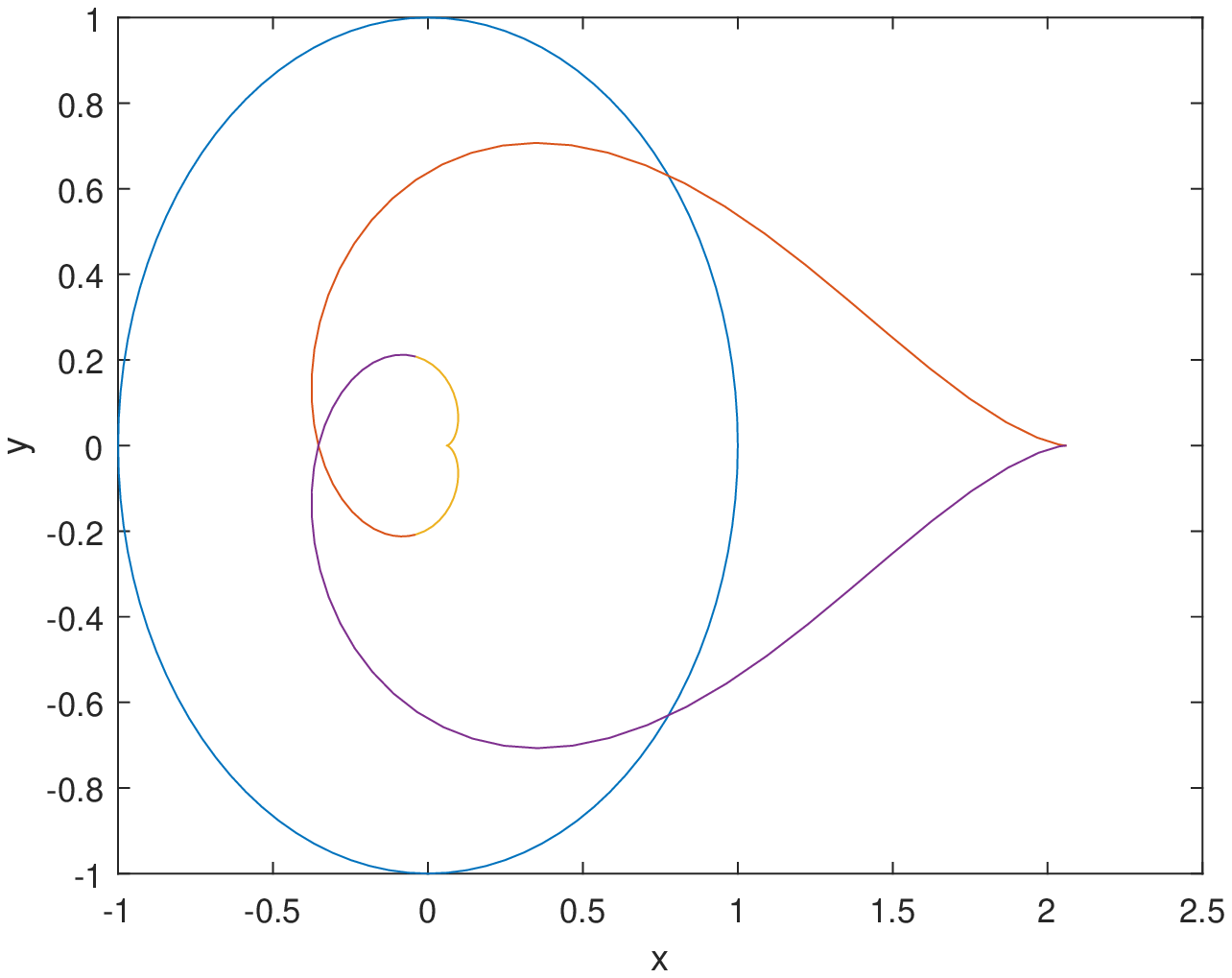}\\
\includegraphics[width=7cm, height=2.4cm]{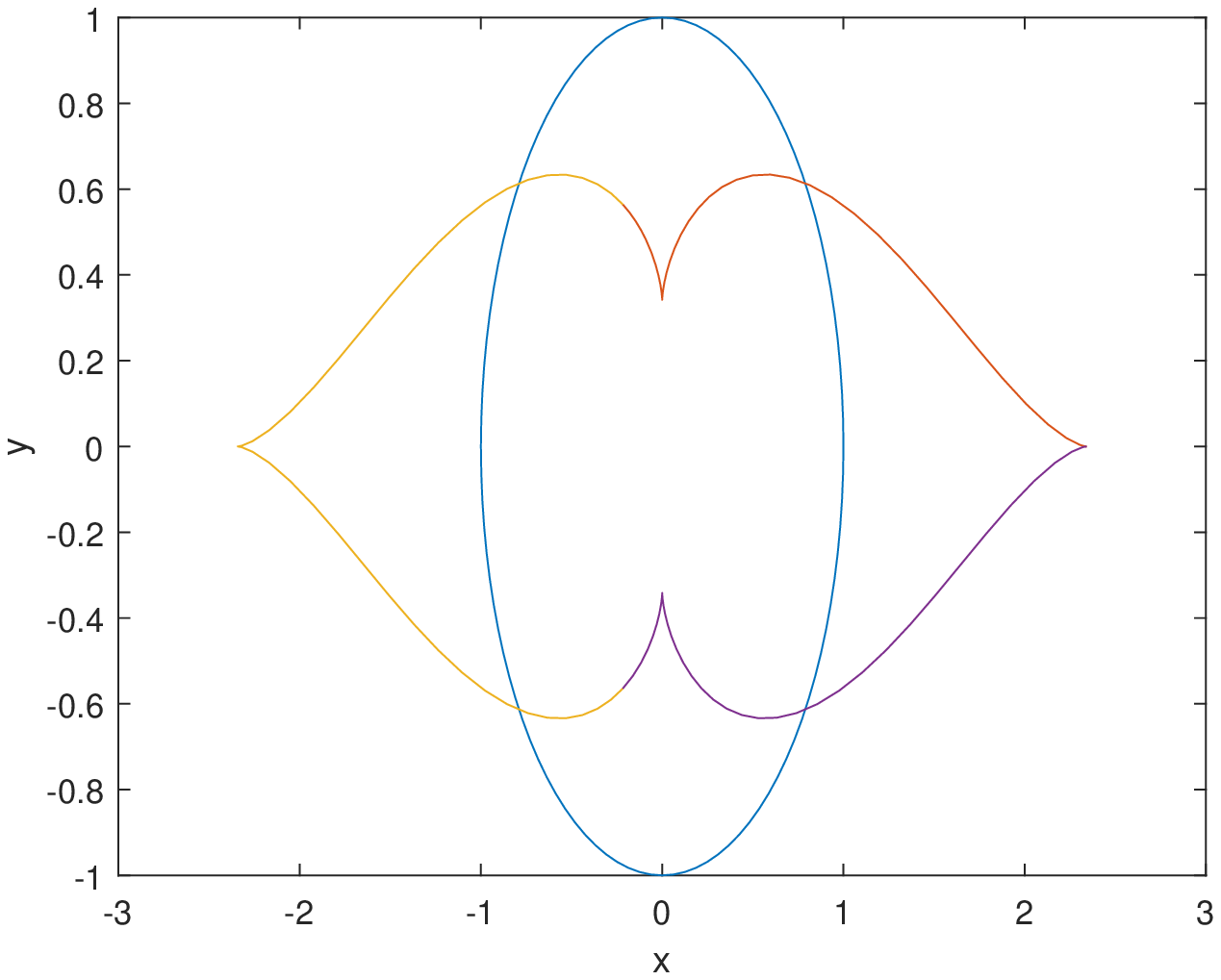}\hspace{1cm}
\includegraphics[width=6.5cm, height=3.2cm]{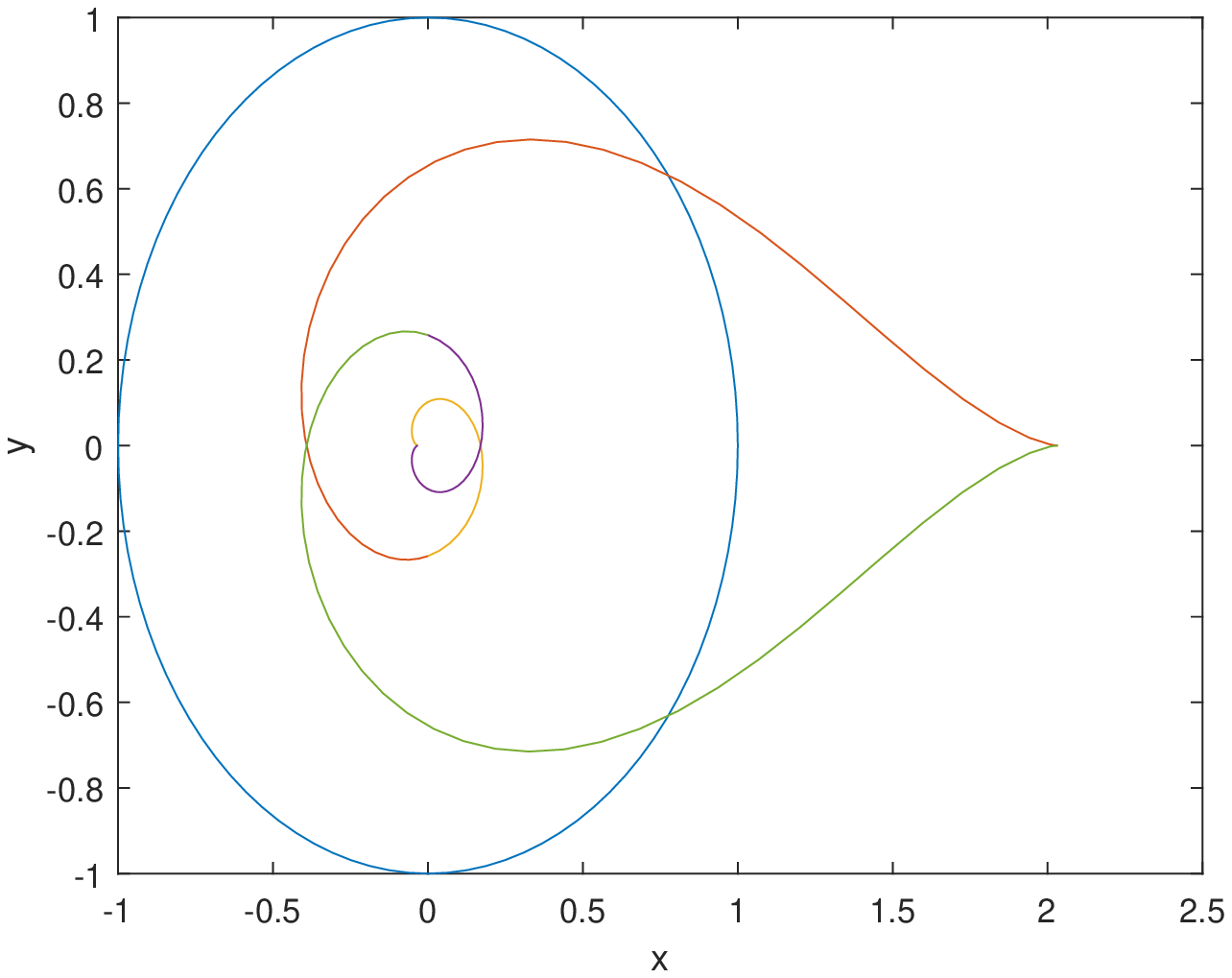}
\caption{Subharmonic SCs $\mcs_{R,0}$ to $\mcc_1$.
Upper-Left: $R=R_{1/2}$, $T_{\min}=4\pi$; Upper-Right: $R=R_{1/3}$, $T_{\min}=6\pi$.
Lower-Left: $R=R_{2/3}$, $T_{\min}=6\pi$;  Lower-Right: $R=R_{1/4}$, $T_{\min}=8\pi$.
}
\label{sc-sub1} 
\end{figure}

\begin{figure}[ht]
\centering
\includegraphics[width=6.5cm, height=6cm]{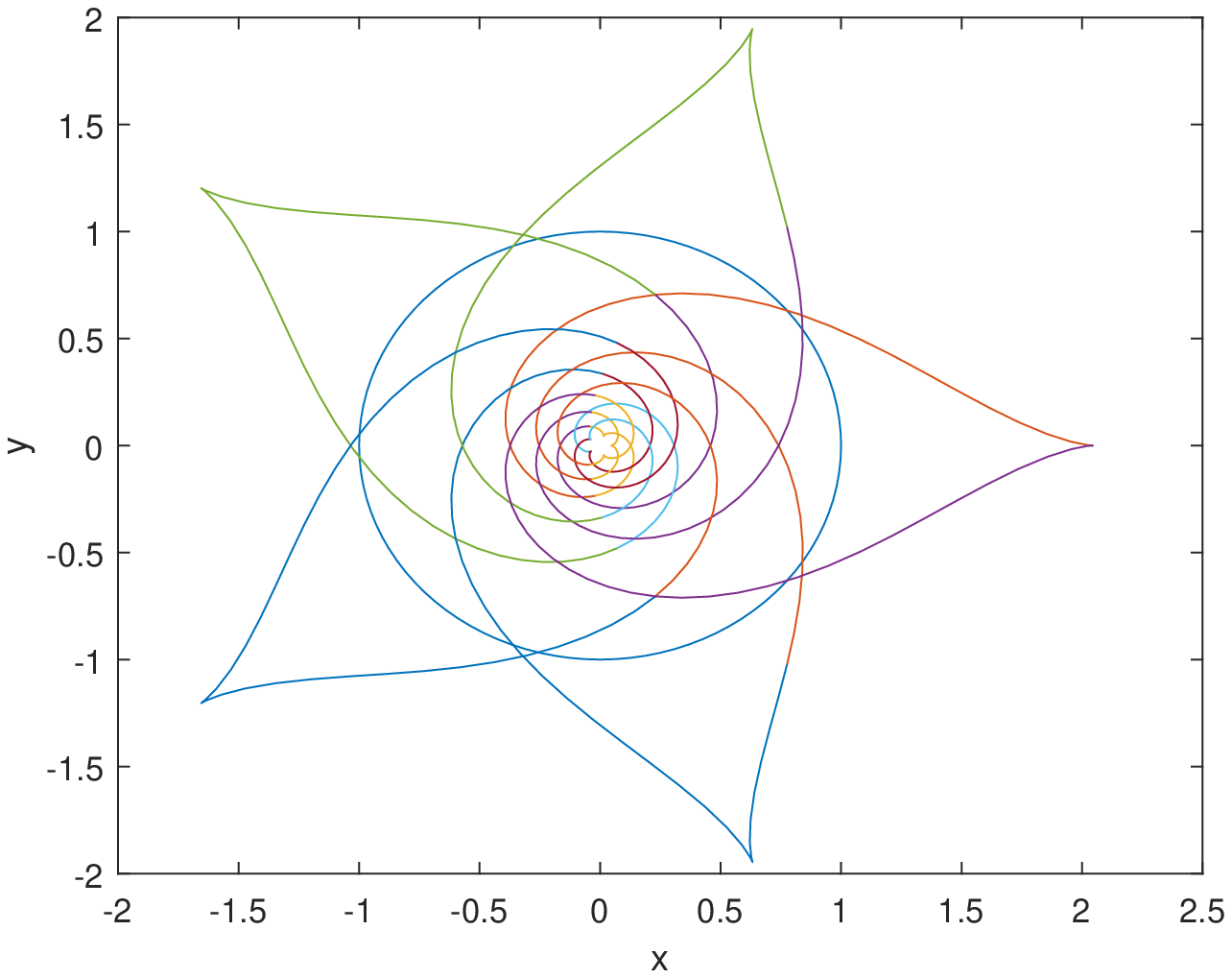}\hspace{1cm}
\includegraphics[width=6.5cm, height=6cm]{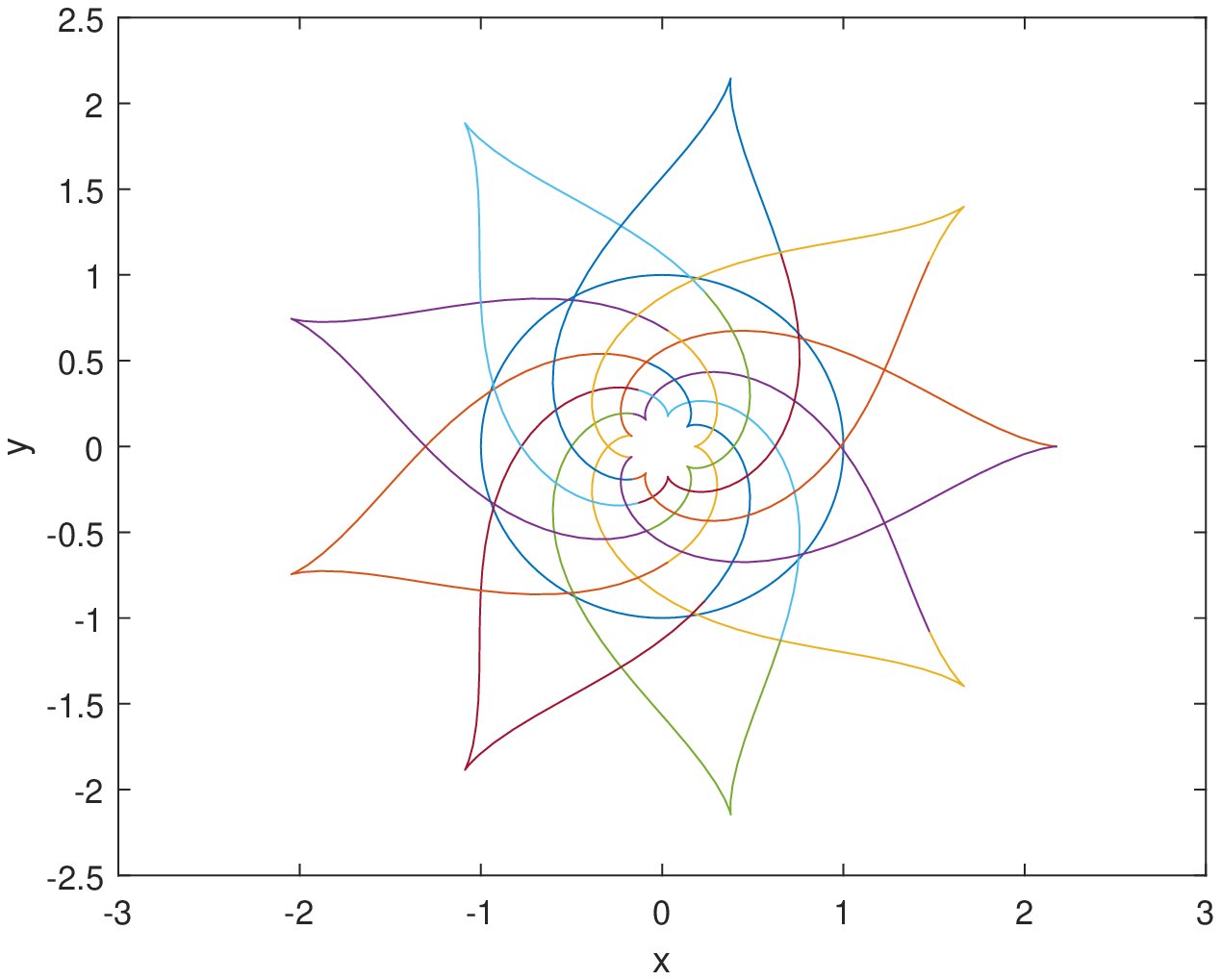}\\
\caption{Subharmonic SCs $\mcs_{R,0}$ to $\mcc_1$ with the minimal period $T_{\min}=34\pi$.
Left: $R=R_{5/17}$; Right: $R=R_{9/17}$. }
\label{sc-sub2} 
\end{figure}

{\bf Case 2: Ergodic SCs.} $R>1$ satisfies
    \be \lb{Rpqn}
    R\ne \rpq \ \ \forall p>q\ge 1,
    \ee
i.e. $\rho_R$ and $\ro(R)$ are irrational. By \x{sc-Rd1} and \x{tlr} again, all SCs $r_\rth(t)$ are quasi-periodic SCs of SE \x{rse1}.

    \bb{thm} \lb{Rdy1b}
Let $R>1$ be as in \x{Rpqn}. Then, for any  $\th_0\in \R$, one has

\bu SC $r_\rth(t)$ always admits an infinite sequence of turning points $\z\{r_{2k}(\th_0)\y\}_{k\in \Z}$ on the outer circle $\mcc_{R+1}$ and another infinite sequence of turning points $\z\{r_{2k+1}(\th_0)\y\}_{k\in \Z}$ on the inner circle $\mcc_{R-1}$.

\bu SC $r_\rth(t)$ is a quasi-periodic SC and 
is dense in the shadowing domain $\D_R$:
    \be \lb{erg1}
    \ol{\z\{r_\rth(t): t\in \R \y\}} = \D_R.
    \ee
    \end{thm}

\Proof As $\ro=\ro(R)$ is irrational, it follows from \x{rtk} that all turning points $r_k(\th_0), \ k\in \Z$ are different. Hence $r_\rth(t)$ has always infinitely many turning points which are located on the boundary circles $\mcc_{R\pm 1}$ of $\D_R$.
The shadowing domain in  \x{Dr} of $r_\rth(t)$ is currently
    \[
    \bigl\{e^{i t} + R e^{i \psi}: t\in\R, \, \psi\in \R \bigr\}.
    \]
It is the annulus $\D_R$ defined in \x{incl}. Hence the density result \x{erg1} follows from Theorem \ref{main2}.
\qed

Due to the density result \x{erg1} as in the theorem, these quasi-periodic SCs for $R$ as in \x{Rpqn} are called  {\it ergodic SCs}. For some typical ergodic SCs, see Figures \ref{Erg3} and \ref{Erg4}.

\begin{figure}[ht]
\centering
\includegraphics[width=6.5cm, height=5.2cm]{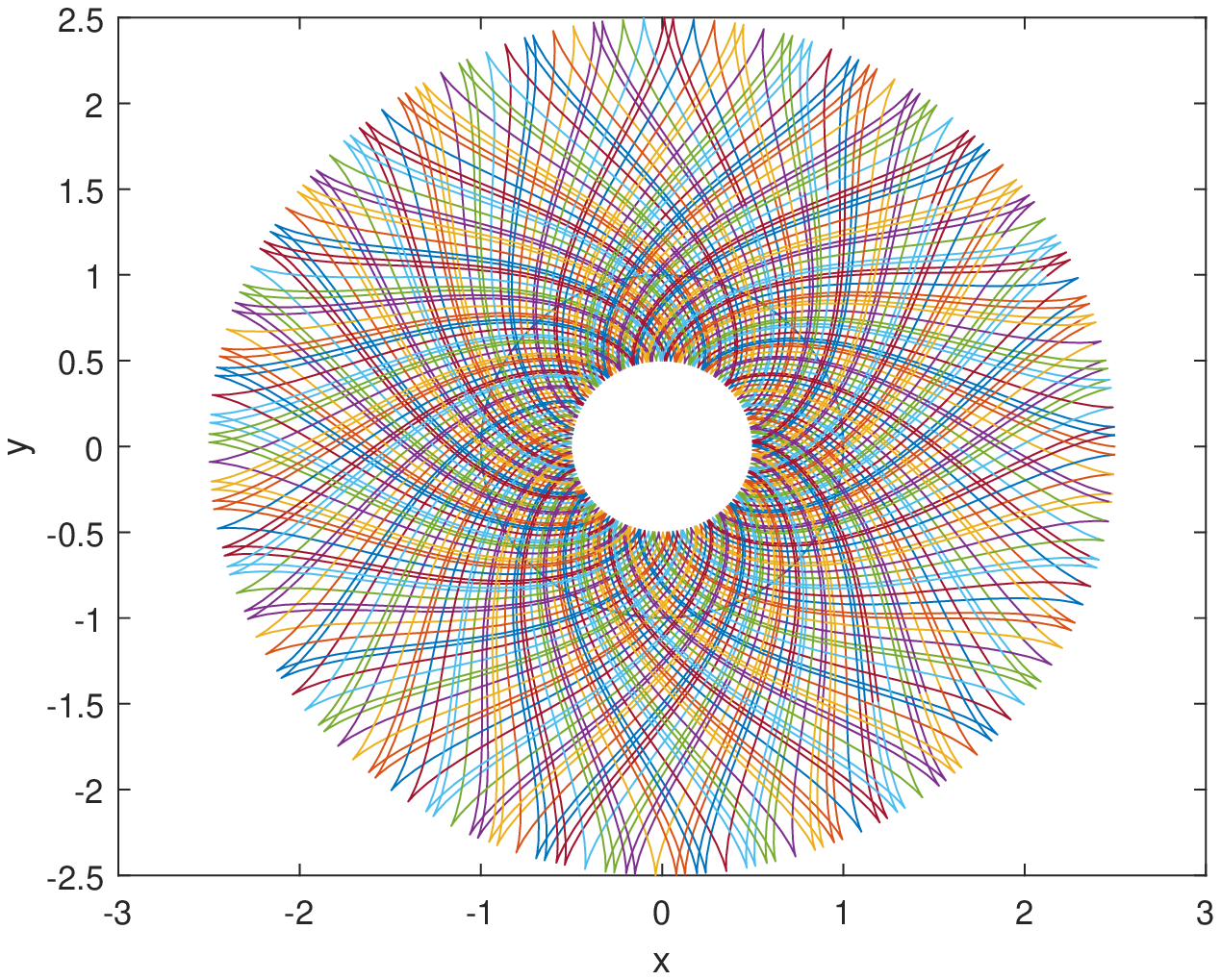}\hspace{1cm}
\includegraphics[width=6.5cm, height=5.2cm]{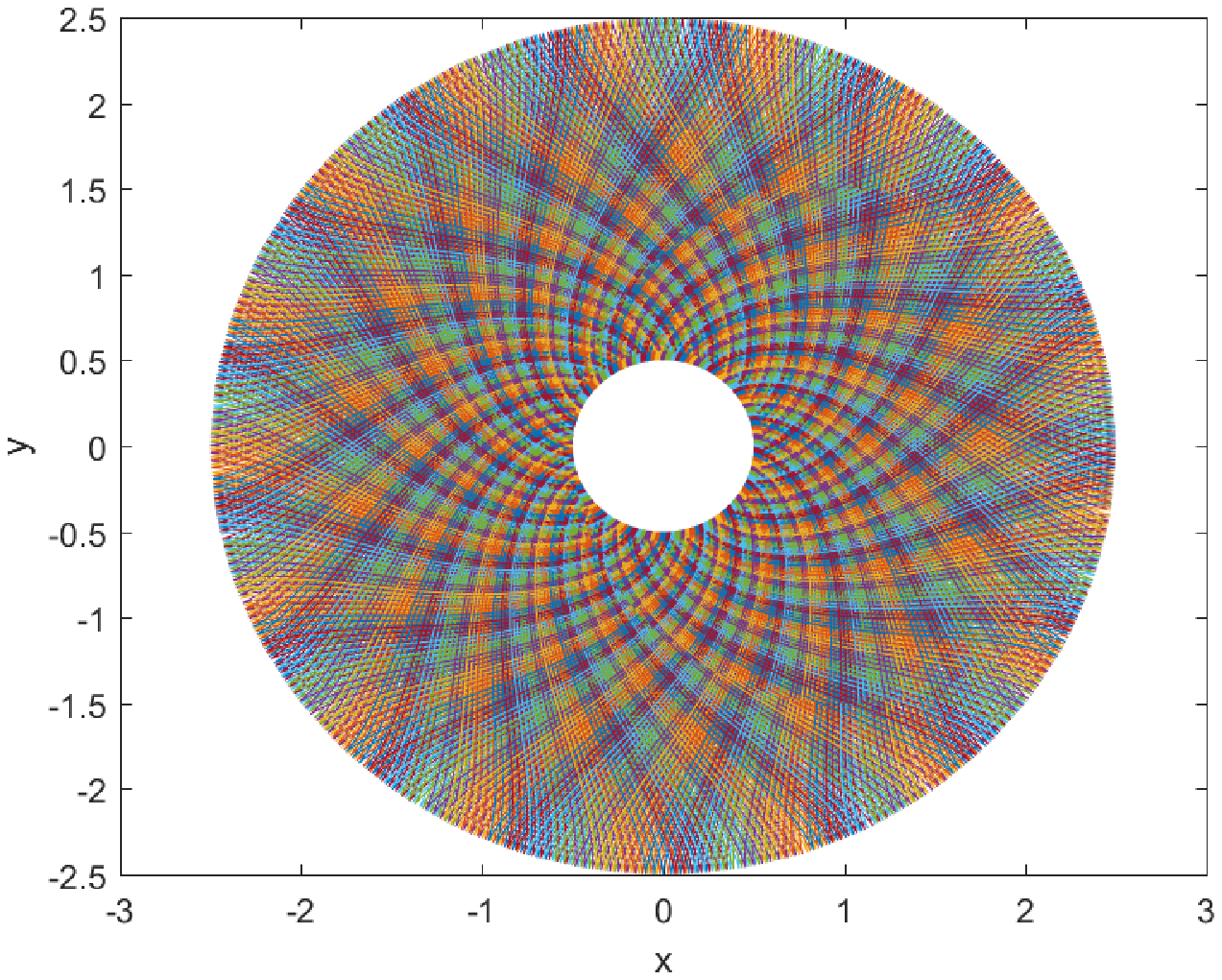}
\caption{Ergodic $\mcs_\rth$ with shadowing distance $R=3/2$ and initial angle $\th_0=0$.
The time spans are $[0,500\pi]$ (left) and $[0,2500\pi]$ (right), respectively.}
\label{Erg3} 
\end{figure}

\begin{figure}[ht]
\centering
\includegraphics[width=6.5cm, height=6.2cm]{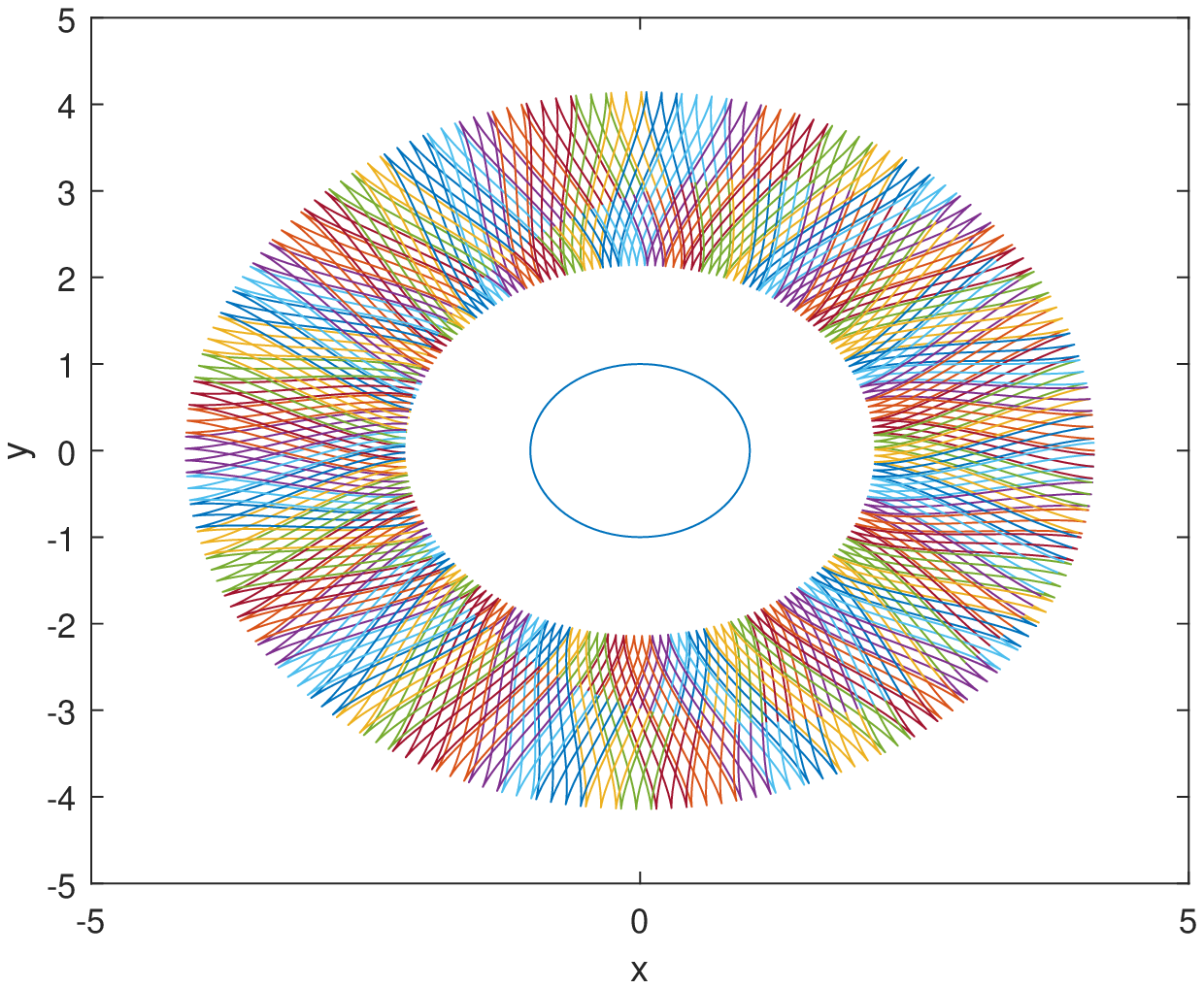}\hspace{1cm}
\includegraphics[width=6.5cm, height=6.2cm]{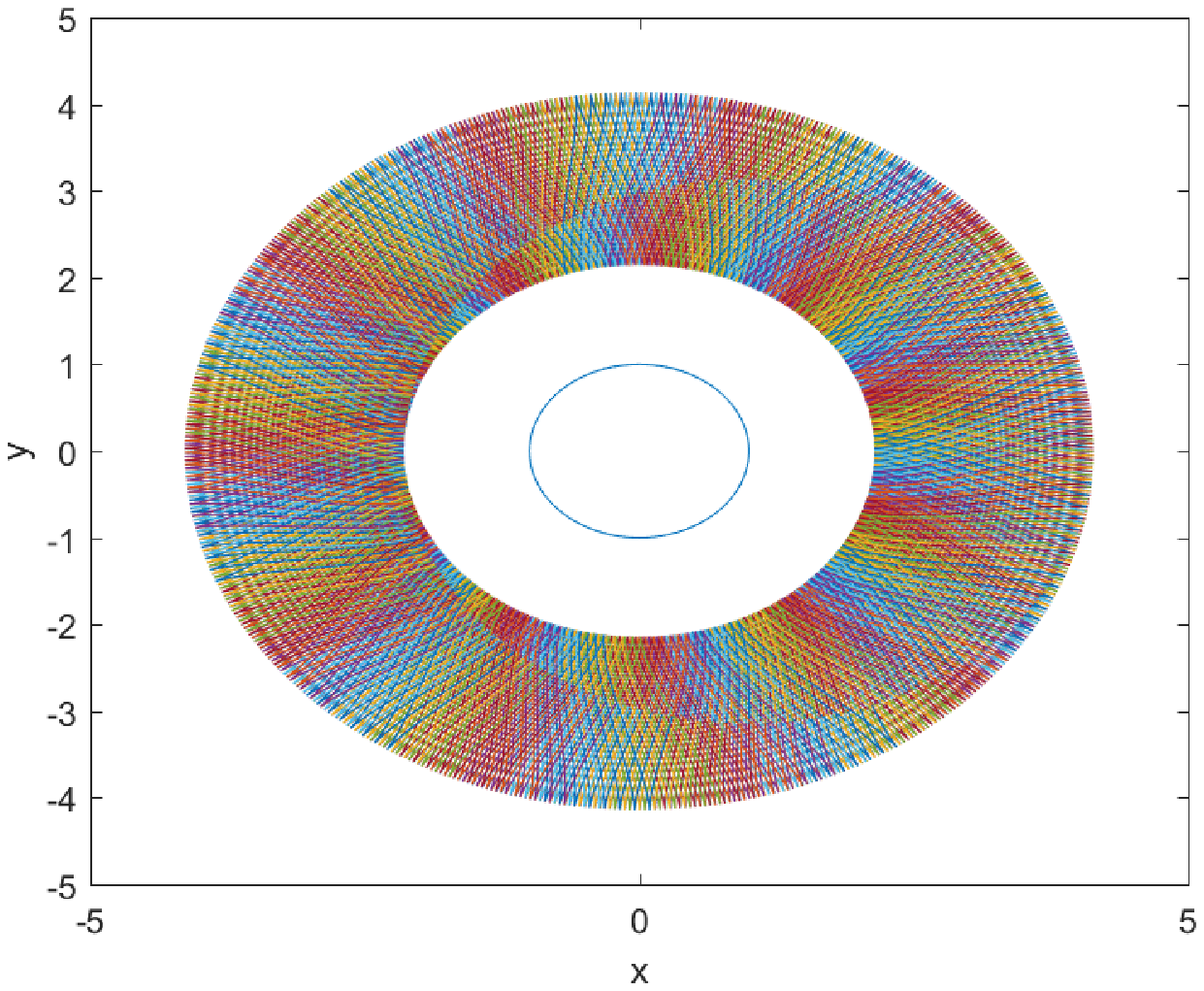}
\caption{Ergodic $\mcs_\rth$ with shadowing distance $R=\pi$ and initial angle $\th_0=0$.
The time spans are $[0,500\pi]$ (left) and $[0,4000\pi]$ (right), respectively.}
\label{Erg4} 
\end{figure}

\subsection{SCs to ellipses} \lb{sc-ell}

Let us choose the escaping curve as the following ellipse
    \[
    \E_b: \q \vec r=r_{0;b}(t) := (\cos t, b \sin t).
    \]
Here $b>0$. In the Descartes coordinates, it is
    \[
    \E_b: \q x^2 +\f{y^2}{b^2} =1.
    \]
The RSE \x{RSE} to $\E_b$ is
    \be\lb{rseb}
    \th'=  -\f{1}{R}(\sin t\sin \th +b \cos t\cos \th), \qq \th(0)=\th_0,
    \ee
whose rotation number is denoted by
    \[
    \ro=\ro_b(R), \qq R>0, \ b>0.
    \]
It is a continuous function of $(R,b)\in(0,+\oo)^2$. We can obtain some lower and upper bounds for the critical shadowing distance to $\E_b$.

    \bb{lem} \lb{rn-p}
For the ellipse $\E_b$, there hold
    \be \lb{Rb}
    \min\{b,1\} \le \ul{R}_b :=\ul{R}(\E_b)\le \f{\ell(\E_b)}{2\pi}=\f{1}{\pi}\z( b {\mathbb E}(1-1/b^2) + {\mathbb E}(1-b^2)\y) \qqf b>0.
    \ee
    \ifl
Moreover, one has
    \be \lb{Rb01}
\ro_b(R) \equiv 1\mbox{ for } R\in(0,\ul{R}_b], \andq \lim_{R\to +\oo} \ro_b(R) =0.
    \ee
    \fi
    \end{lem}

\Proof The upper bound in \x{Rb} can be obtained from \x{ulr-u1}, because $\E'_b$ has the rotation index $1$. Here $\ell(\E_b)$ is just the usual perimeter of ellipses, given by the elliptic functions of $b$.

To obtain the lower bound in \x{Rb}, we argue as in the proof of Theorem \ref{rho-01}. As in \x{ths}, we simply use the transformation
    \(
    \phi(t): = \th(t)-t
    \)
for RSE \x{rseb}. Then $\phi(t)$ satisfies the time periodic ODE
    \be \lb{rseb1}
    \f{\rd\phi}{\dt} =- \f{1}{R} \z(\sin t\sin (t+\phi) + b \cos t\cos(t+\phi) \y)-1 =: g(t,\phi).
    \ee
Note that
    \[
    g(t,0)= -\f{1}{R}(\sin^2 t+b \cos^2 t)- 1<0
    \]
for all $t$. On the other hand, let $R< \min\{b,1\}$. Then, for any $t$,
    \beaa
    g(t,-\pi)\EQ \f{1}{R}(\sin^2 t+b \cos^2 t)- 1\ge \f{1}{R} \min\{b,1\} - 1>0.
    \eeaa
By Lemma \ref{existenceper}, Eq. \x{rseb1} admits $2\pi$-periodic solutions $\phi=\phi_\pm(t)$. Similar to the proof of Theorem \ref{rho-01}, the rotation numbers of Eq. \x{rseb1} and Eq. \x{rseb} are respectively $0$ and $1$ if $R< \min\{b,1\}$. This yields the desired lower bound in \x{Rb}.
\qed

For $b=1$, both bounds in \x{Rb} are precisely equal to $\ul{R}(\mcc_1)=1$, the critical shadowing distance of $\mcc_1$. See Remark \ref{c11}. In Figure \ref{rnb2}, we have plotted the function of rotation numbers $\ro_2(R)$ of $R$ by choosing $b=2$. Like the case of circles, it has a platform and then is strictly decreasing in $R$. One sees for the ellipse $\E_2$ that the critical and the turning shadowing distances $\ul{R}(\E_2)$ and $\ol{R}(\E_2)$ are equal and  are approximately $1.445$. Notice that the constant $\mu=\ell_0/2\pi$ is approximately 1.542. This is consistent with the result we have given in Theorem \ref{circ}.

\begin{figure}[ht]
\centering
\includegraphics[width=8cm, height=5cm]{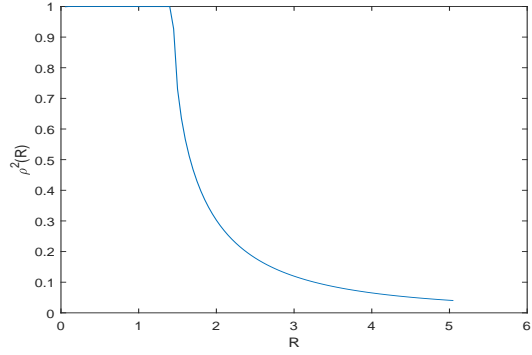}
\caption{Rotation numbers $\ro_b(R)$, $b=2$ of RSE \x{rseb}, as a function of $R$. }
\label{rnb2} 
\end{figure}

Some numerical observations to the types of SCs to the ellipse $\E_b$ with the choice of $b=2$ are as follows.

{\bf Periodic SCs.} When $R$ is relatively small, say $R=1$, we have the $2\pi$-periodic SCs $r_{R,\th_{R,b}^\pm;b}(t)$ and those SCs which are approaching to the periodic SCs. See Figure \ref{sc-ell10}. These are similar to the case of the circle EC. However, different from the circle case, general SCs are approaching to different $2\pi$-periodic closed SCs as time evolves to $+\oo$ or to $-\oo$. That is, the $2\pi$-periodic SCs $r_{R,\th_R^\pm}(t)$ to $\mcc_1$, which have same trajectories, have split into different $2\pi$-periodic SCs $r_{R,\th_{R,b}^\pm;b}(t)$ to ellipses.

\begin{figure}[ht]
\centering
\includegraphics[width=7.5cm, height=5cm]{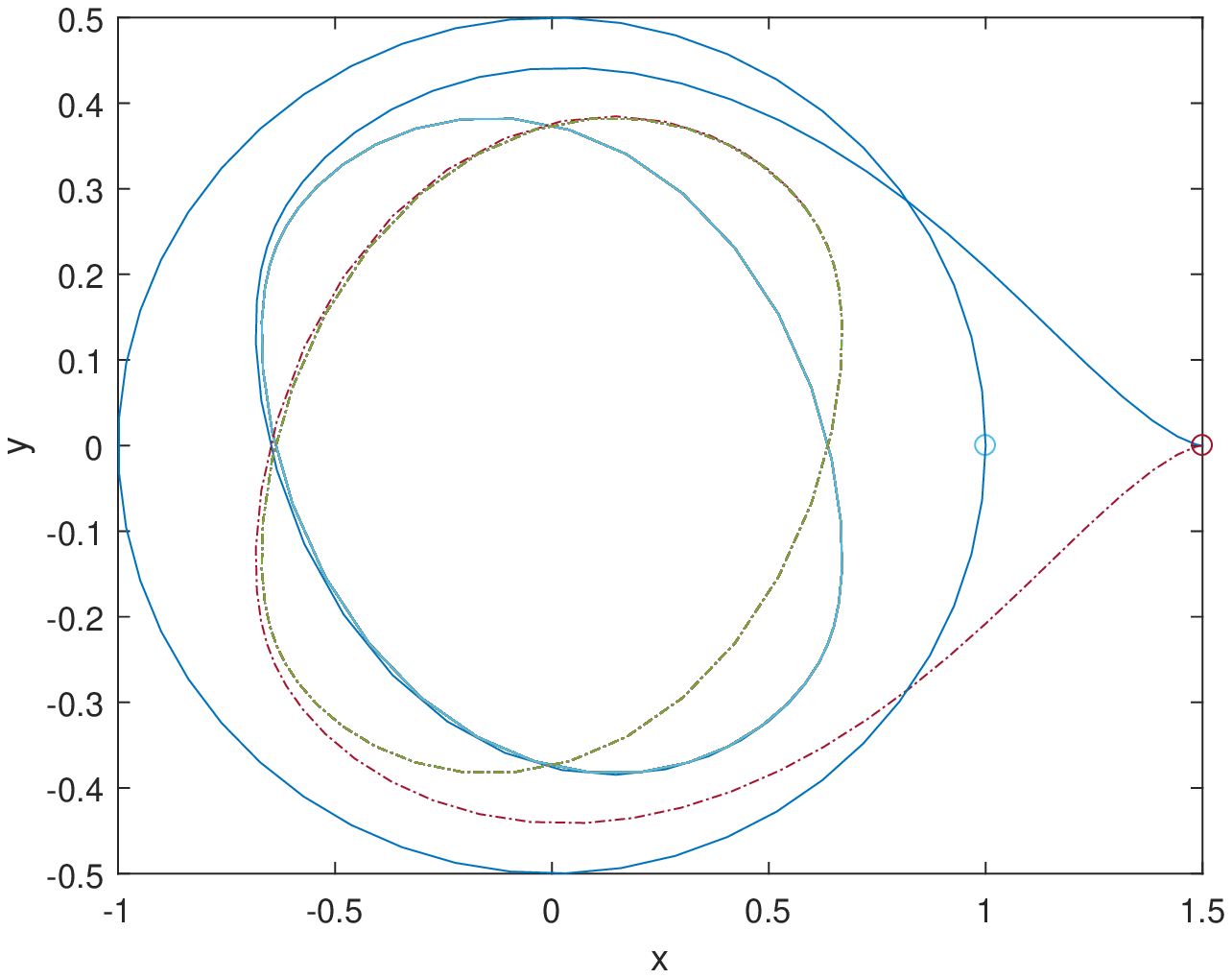}\hspace{1cm}
\includegraphics[width=6cm, height=6.5cm]{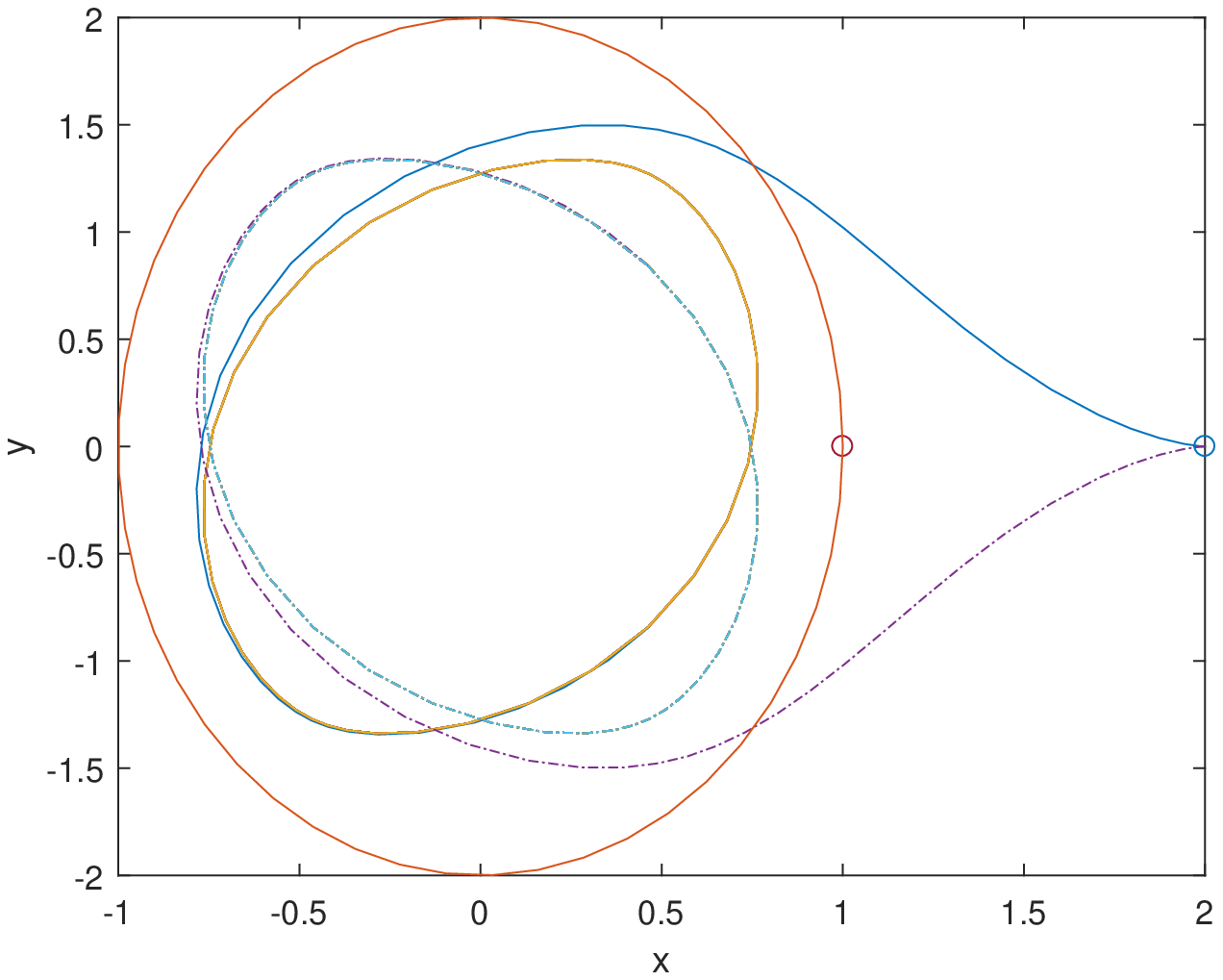}
\caption{SCs $S_{R,0;b}$ to ellipses $\E_b$ with small shadowing distances $R$, where $(R,b)=(1/2,1/2)$ (left) and $(R,b)=(1,2)$ (right).
}
\label{sc-ell10} 
\end{figure}

\ifl
\begin{figure}[ht]
\centering
\includegraphics[width=6.5cm, height=5cm]{scb2a}\hspace{1cm}
\includegraphics[width=6.5cm, height=8cm]{scb2b}\\
\caption{SCs $S_{R,\th_0;b}$ to ellipse $\E_b$ with shadowing distances $R<\ul{R}_b$, where $(R,\th_0,b)=(1.4,0,2)$ (left) and $(R,\th_0,b)=(1.4,\pi/2,2)$ (right). } 
\label{sc-ellr14} 
\end{figure}

One sees from Figures \ref{sc-ell10} and \ref{sc-ellr14} that, when $t\to +\oo$ and $t\to-\oo$, each SC to ellipse is approaching to different closed SCs. Comparing with Figure \ref{sc-circle5} for the circle EC, the limiting circle is now split into two closed SCs.
\fi

{\bf Ergodic SCs.} When $R$ is relatively large, the irrationality of $\ro(R)$ will lead to ergodic SCs. 
See Figure \ref{scell250b}.

\ifl
\begin{figure}[ht]
\centering
\includegraphics[width=5.5cm, height=5cm]{scbec1}\hspace{1cm}
\includegraphics[width=5.5cm, height=5cm]{scbec2}
\caption{Ergodic SC $S_{2,0;2}$ to the ellipse with a large shadowing distance $R$.
Here the time spans are $[0,20\pi]$ (left) and $[0,2000\pi]$ (right).}
\label{sc-ell2} 
\end{figure}
\fi

\begin{figure}[ht]
\centering
\includegraphics[width=5cm, height=5cm]{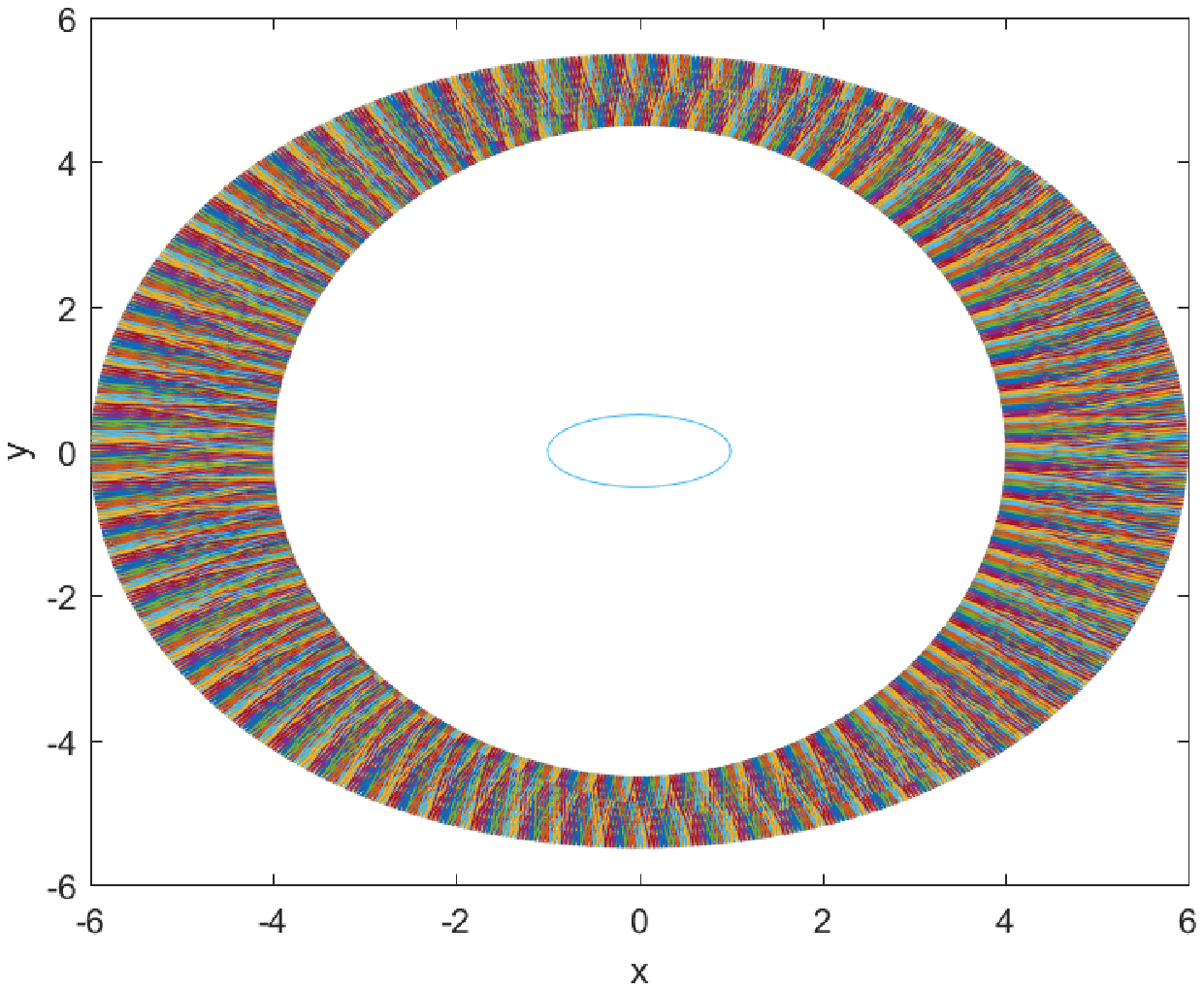}\hspace{1.5cm}
\includegraphics[width=4.5cm, height=6.5cm]{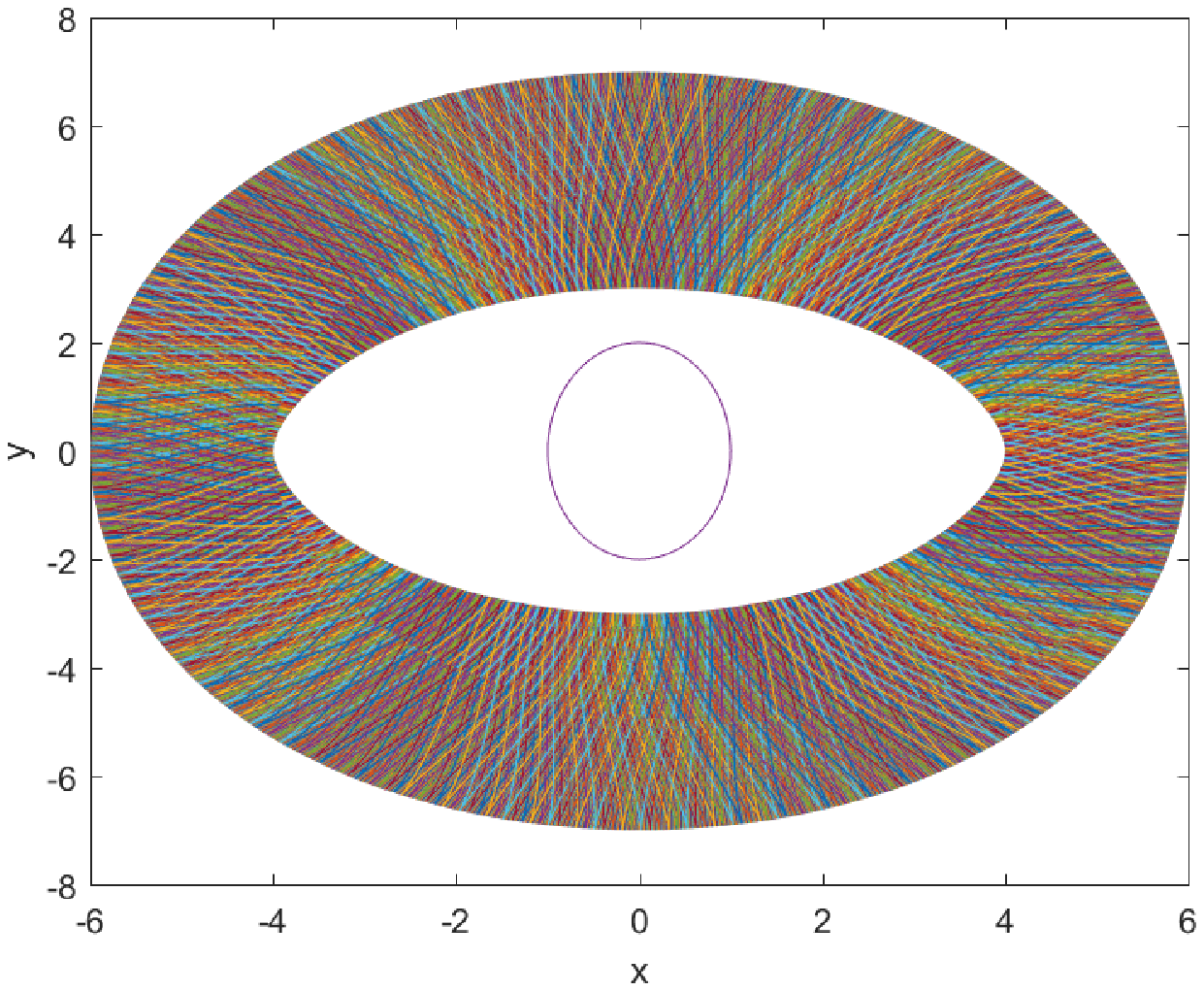}
\caption{Ergodic SCs $S_{5,0;1/2}$ (left) and $S_{5,0;2}$ (right) to ellipses with a large distance $R$. }
\label{scell250b} 
\end{figure}

{\bf Subharmonic SCs.} When $R$ is larger than the turning shadowing distance $\ol{R}(\E_b)$ with rational rotation number $\ro_b(R)$, we have predicted in Theorem \ref{main1} the existence of subharmonic SCs. However, as these subharmonic SCs are sensitive in shadowing distance, it is not easy to precisely simulate subharmonic SCs.


\ifl
\ysb{See Figure \ref{subharm}.

\begin{figure}[ht]
\centering
\includegraphics[width=4.5cm, height=4cm]{scb2s21}\hspace{0.5cm}
\includegraphics[width=4.5cm, height=4cm]{scb2s31}\hspace{0.5cm}
\includegraphics[width=4.5cm, height=4cm]{scb2s41}
\caption{Subharmonic SCs to the ellipse $\E_2$, where $q=1$ and $p=2,3,4$ (from left to right).  }
\label{subharm} 
\end{figure}

\fi

\section{Conclusion and a Conjecture}
\setcounter{equation}{0} \lb{fifth}

\subsection{Conclusion} \lb{concl}

In this paper we have introduced a dynamical system model called the shadowing problem. When the shadower is shadowing  the escaper in the Euclidean spaces, we have derived the shadowing equations and the extended shadowing equations. When escaping curves are chosen as planar closed curves, we have derived the reduced shadowing equations for planar shadowing curves. The complete structure and types of planar shadowing curves can then be determined using the rotation numbers of the circle dynamics, including the famous Denjoy theorem. Moreover, even when the escaping curve is the circle or ellipses, we found that the shadowing problem admits many interesting non-trivial shadowing curves.

\subsection{A conjecture} \lb{conj}

We have only given in this paper a beginning study to the shadowing problems using dynamical systems theory.
Recall that for a general regular planar closed curve $\E$, considered as an escaping curve, we have introduced two important notions --- the critical shadowing distance $\ul{R}(\E)$ and the turning shadowing distance $\ol{R}(\E)$. Their roles in characterizing shadowing curves are displayed in Theorem \ref{main1} and Theorem \ref{main2}. From Lemma \ref{upbound}, Theorem \ref{rho-01} and Theorem \ref{mono}, it is interesting that these are closely related with the $1$-dimensional geometrical quantities of $\E$, like the perimeter $\ell(\E)$, the area $\A(\E)$, and the rotation index $\om(\E)$. On the other hand, by observing the simple examples like the circles and ellipses, it seems that these quantities $\ul{R}(\E)$ and $\ol{R}(\E)$ are coincident. We conjecture that this is true for more general class of closed curves.

    \bb{conj} \lb{conj1}
For any smooth strictly convex closed curve $\E$ on the plane, there holds
    \[
    \ul{R}(\E)= \ol{R}(\E).
    \]
    \end{conj}

Once Conjecture \ref{conj1} is true, $\ul{R}(\E)= \ol{R}(\E)$ can be understood as a geometrical quantity  which cuts the $1$-dimensional and the $2$-dimensional geometries of $\E$, from a point view of dynamical systems.

Finally, it is also of interest to study shadowing problems in higher dimensional Euclidean spaces or in other spaces.

\end{document}